\titleformat*{\section}{\large\bfseries}
\titleformat*{\subsection}{\large\bfseries}
\titleformat*{\subsubsection}{\large\bfseries}
\newcommand{\cf}{cf.\ }
\newcommand{\resp}{resp.\ }
\newcommand{\ie}{i.e.\ } 
\newcommand{\eg}{e.g.\ } 
\newcommand{\iid}{i.i.d.\ } 
\newcommand{\fdd}{f.d.d.\ } 
\newcommand{\wrt}{w.r.t.\ } 
\newcommand{\rhs}{r.h.s.\ }
\newcommand{\RR}{\ensuremath{\mathbb{R}}}
\newcommand{\NN}{\ensuremath{\mathbb{N}}}
\newcommand{\ZZ}{\ensuremath{\mathbb{Z}}}
\newcommand{\QQ}{\ensuremath{\mathbb{Q}}}
\newcommand{\PP}{\ensuremath{\mathbb{P}}}
\newcommand{\EE}{\ensuremath{\mathbb{E}}}
\DeclareMathOperator{\conv}{conv}
\newcommand{\pr}{\ensuremath{\text{\normalfont pr}}}
\newcommand{\Ex}{\ensuremath{\text{\normalfont Ex}}}
\newcommand{\tm}{\ensuremath{\text{\normalfont TM}}}
\newcommand{\tcf}{\ensuremath{\text{\normalfont TCF}}}
\newcommand{\hyp}{\ensuremath{\text{\normalfont HYP}}}
\newcommand{\psd}{\ensuremath{\text{\normalfont PSD}}}
\newcommand{\maxstab}{\ensuremath{\text{\normalfont MAX}}}
\newcommand{\bin}{\ensuremath{\text{\normalfont BIN}}}
\newcommand{\cpp}{\ensuremath{\text{\normalfont CPP}}}
\newcommand{\cut}{\ensuremath{\text{\normalfont CUT}}^{\square}}
\newcommand{\cor}{\ensuremath{\text{\normalfont COR}}^{\square}}
\newcommand{\hy}{\ensuremath{\text{\normalfont hyp}}}
\newcommand{\pu}{\ensuremath{\text{\normalfont pure}}}
\newcommand{\A}{\ensuremath{{\mathcal A}}}
\newcommand{\finite}{\ensuremath{\mathcal{F}}}
\newcommand{\eins}{\ensuremath{\mathbf{1}}}
\newcommand{\Eins}{\ensuremath{\mathbbm{1}}}
\newcommand{\pmid}{\ensuremath{\,:\,}}
\newcommand{\Max}{\ensuremath{\bigvee}}
\newcommand{\Min}{\ensuremath{\bigwedge}}
\theoremstyle{plain}
\newtheorem{theorem}{Theorem}
\newtheorem{proposition}[theorem]{Proposition}
\newtheorem{corollary}[theorem]{Corollary}
\newtheorem{lemma}[theorem]{Lemma}
\theoremstyle{definition}
\newtheorem{example}[theorem]{Example}
\theoremstyle{remark}
\newtheorem{remark}[theorem]{Remark}
\def\OH{$\sfrac{1}{2}$} 
\def\OT{$\sfrac{1}{3}$} 
\def\TT{$\sfrac{2}{3}$} 
\def\ol{$\times$} 
\begin{document}

\title{The realization problem for tail correlation functions} 

\author{
  Ulf-Rainer Fiebig\footnote{\mbox{Institute for Mathematical Stochastics, University of G\"ottingen, Email: urfiebig@math.uni-goettingen.de}}, \, 
  Kirstin Strokorb\footnote{Institute of Mathematics, University of Mannheim, Email: strokorb@math.uni-mannheim.de} \, and 
  Martin Schlather\footnote{Institute of Mathematics, University of Mannheim, Email: schlather@math.uni-mannheim.de}
}

\maketitle 

\begin{abstract}
For a stochastic process $\{X_t\}_{t \in T}$ with identical one-dimensional margins and upper endpoint $\tau_{\text{up}}$ its tail correlation function (TCF) is defined through $\chi^{(X)}(s,t) =  \lim_{\tau \to \tau_{\text{up}}} P(X_s > \tau \,\mid\,   X_t > \tau )$. It is a popular bivariate summary measure that has been frequently used in the literature in order to assess tail dependence. In this article, we study its realization problem.
We show that the set of all TCFs on $T \times T$ coincides with the set of TCFs stemming from a subclass of max-stable processes and can be completely characterized by a system of affine inequalities. Basic closure properties of the set of TCFs and regularity implications of the continuity of $\chi$ are derived. If $T$ is finite, the set of TCFs on $T \times T$ forms a convex polytope of $\lvert T \rvert \times \lvert T \rvert$ matrices.
Several general results reveal its complex geometric structure.
Up to $\lvert T \rvert = 6$ a reduced system of necessary and sufficient conditions for being a TCF is determined. 
None of these conditions will become obsolete as $\lvert T \rvert\geq 3$ grows. 
\end{abstract}

{\small
\noindent \textit{Keywords}: 
convex polytope, extremal coefficient, max-stable process, tail correlation matrix, tail dependence matrix, Tawn-Molchanov model
\\ 
\noindent \textit{2010 MSC}: {60G70 -- 15B51 -- 52B12 -- 52B05 -- 05-04} \\ 
}


\section*{Introduction}

The study of the existence of stochastic models with some prescribed distributional properties has a long tradition in the theory of probability and various fields of application. Let $\{X_t\}_{t \in T}$ be a stochastic process on some index set $T$ (which may be finite or infinite with some topological structure). Typically, a real-valued summary statistic $\kappa^{(X)}(s,t)$ of the distribution of $(X_s,X_t)$ is of particular interest for all pairs $(s,t) \in T \times T$.
The question is whether for some prescribed function $\kappa$ on $T \times T$ a stochastic model $\{X_t\}_{t \in T}$ exists that \emph{realizes} $\kappa$, i.e. if $\kappa^{(X)}=\kappa$. 
Recent accounts and surveys on such \emph{realization problems} with an emphasis on $\{0,1\}$-valued processes 
(or random sets, two-phased media, binary processes) 
include \cite{torquato02}, \cite{quint08}, \cite{emery10}, \cite{lm15} and \cite{la15}.
Also from a statistical point of view realization problems are important, namely for consistent inference.

As pointed out by \cite{lm15}, the question of realizability usually leads to a (possibly infinite and even in finite setups huge) set of positivity conditions for the quantity of interest, and secondly, to a set of regularity conditions if the topology of the underlying space is of interest as well.
These positivity conditions are needed in statistical applications to correct estimators $\widehat{\kappa}^{(X)}$ for $\kappa^{(X)}$ to an admissible function.

Let us consider a classical example. Assuming that the second moments 
of a real-valued stochastic process $\{X_t\}_{t \in T}$ exist at each locaction $t \in T$, the process possesses a covariance function $C^{(X)}(s,t)=\text{Cov}(X_s,X_t)$. It is well-known that $C=C^{(X)}$ must be \emph{positive semi-definite}, i.e. $C(s,t)=C(t,s)$ and
\begin{align}\label{eq:psd}
\sum_{i=1}^m \sum_{j=1}^m a_ia_j C(t_i,t_j) \geq 0 \quad \forall \, (t_1,\dots,t_m) \in T^m, \, (a_1,\dots,a_m) \in \RR^m, \, m \in \NN. 
\end{align}
Conversely, for any such function $C$, there exists a stochastic process $\{X_t\}_{t \in T}$ with covariance function $C^{(X)}=C$. The stochastic process $\{X_t\}_{t \in T}$ is not unique, but it may be chosen to be a centered Gaussian process as can be easily checked by  Kolmogorov's extension theorem.  Such a process on the space $T$ (if additionally equipped with some topology), may have very uncomfortable regularity properties. Several authors have  established connections between the regularity of the covariance function $C$ and the existence of a corresponding stochastic process with a certain sample path regularity, \cf e.g.\ \cite{adler90} for an overview in case of continuity. 
In statistical applications, the development of efficient non-parametric estimators for the covariance function that ensure positive semi-definiteness can be a challenging task, cf.\  e.g.\ \cite{halletal} and \cite{politis11}.

When it comes to the extreme values in the upper quantile regions of a real-valued stochastic process $\{X_t\}_{t \in T}$, summary measures like the covariance function often do not exist and would be genuinely inappropriate to characterize dependence.
Instead, among several other summary statistics that have emerged in an extreme value context (cf.\ for instance \cite{beirlantetal04} Section 8.2.7), the following bivariate quantity
\begin{align*}
\chi^{(X)}(s,t):=\lim_{\tau \to \tau_{\text{up}}} \PP(X_s > \tau \,|\, X_t > \tau), \qquad s,t \in T,
\end{align*}
which we call \emph{tail correlation function} (TCF) \citep{strokorbballanischlather_15}, 
has received particular attention.
As commonly done and in accordance with stationarity assumptions, we assume here and hereafter that $\{X_t\}_{t \in T}$ has identical one-dimensional marginal distributions with upper endpoint $\tau_{\text{up}}$ (which may be $\infty$).  

Dating back to \cite{geffroy_5859}, \cite{sibuya_60} and \cite{tiagodeoliveira_62}, the TCF enjoys steady popularity among practitioners and scholars in order to account for tail dependence, albeit frequently reported 
under different names. 
The insurance, finance, economics and risk management literature knows it mainly as \emph{(upper) tail dependence coefficient} \citep{frahmetal05}, \emph{coefficient of (upper) tail dependence} \citep{mcneil2003modelling} or simply as \emph{(upper) tail dependence} \citep{patton06}.
In environmental contexts it has been additionally addressed as \emph{$\chi$-measure} \citep{colesheffernantawn_99}. Spatial environmental applications tend to prefer the equivalent quantity $2-\chi$, referred to as \emph{extremal coefficient function}. Among many others, the references \cite{blanchetdavison_11}, \cite{emks_15} and \cite{thibaudopitz_15} use it as an exploratory tool for testing the goodness of fit. In the context of stationary time series, the TCF constitutes a special case of the \emph{extremogram} \citep{davismikosch_09}. Moreover, the standard classification of the random pair $(X_s,X_t)$ as exhibiting either \emph{asymptotic/extremal independence} (when $\chi^{(X)}(s,t)=0$)  or \emph{asymptotical/extremal dependence} (when $\chi^{(X)}(s,t)\in (0,1]$) is based on the TCF $\chi$.

Even though the TCF is a ubiquitous quantity within the extremes literature, surprisingly little is known about the class of TCFs and even less when it comes to the interplay of TCFs and their realizing models. 
This is the central theme of the present text.
That is, 
we are aiming  at giving at least partial answers to the following questions:

{\it
\begin{enumerate}[(A)]
\item Can we decide if a given real-valued function $\chi: T\times T \rightarrow \RR$ is the TCF of a stochastic process $\{X_t\}_{t \in T}$?
\item If this is the case, can we find a specific stochastic process $\{X_t\}_{t \in T}$ with $\chi^{(X)}=\chi$? 
\end{enumerate}
}

We also address the following regularity question.

{\it
\begin{enumerate}[(C)]
\item Does the continuity of a TCF $\chi$ imply the existence of a stochastic process realizing $\chi$ that additionally satisfies some regularity property?
\end{enumerate}
}

A satisfactory answer to Question (A) is desirable in a statistical context 
in order to decide whether estimators of the TCF produce admissible TCFs as an outcome.
This concerns specifically spatial applications where one is bound to encounter very high-dimensional observations and therefore only partial low-dimensional information (such as the TCF) can be taken into account for inference. A first attempt to include properties of the class of TCFs to improve statistical inference can be found in  \cite{schlathertawn_03}.
The TCF $\chi=\chi^{(X)}$ is a \emph{non-negative correlation function}. That is, $\chi$ is positive semi-definite in the sense of \eqref{eq:psd} with $\chi(s,t) \geq 0$ and $\chi(t,t)=1$ for all $s,t \in T$ (cf.\ e.g.\ \cite{schlathertawn_03}, \cite{davismikosch_09} and \cite{fasenetalii_10}).
However, even though TCFs are non-negative correlation functions, not all such functions  are TCFs. For instance, $\eta:=1-\chi$ has to satisfy the \emph{triangle inequality}
\begin{align}\label{eqn:triangle}
\eta(s,t) \leq \eta(s,r) + \eta(r,t) \qquad r,s,t \in T
\end{align}
\citep{schlathertawn_03}. In the context of $\{0,1\}$-valued stochastic processes, it is well-known that the respective covariance functions obey this triangle inequality and implications are addressed \eg in \cite{matheron_87}, \cite{markov_95} and \cite{jiaoetalii_07}. If $T=\RR^d$ and the underlying process is stationary, then the function $h \mapsto \chi(o,h)$ (with $o \in \RR^d$ being the origin) cannot be differentiable unless it is constant.

The simplest TCFs are the constant function $\chi(s,t) = 1$ realized by a process of identical random variables, and the function $\chi(s,t)=\delta_{st}:=\Eins_{s=t}$ realized by a process of independent random variables.
Another example for $\chi^{(X)}(s,t)=\delta_{st}$ is a {Gaussian process} $X$ on $T$, whose correlation function $\rho$ on $T \times T$ attains the value $1$ only on the diagonal $\{(t,t) : t \in T\}$ \citep[Theorem~3]{sibuya_60}.
While Gaussian processes do not exhibit tail dependence, the class of \emph{max-stable processes} naturally provides rich classes of non-trivial TCFs.
For instance, any function of the form $\chi(s,t)=\int_{[0,\infty)} \exp\left(-\lambda \lVert s - t \rVert \right) \Lambda(d\lambda)$ will be the TCF of a max-stable process on $T=\RR^d$, if $\Lambda$ is a probability measure on $[0,\infty)$ \citep{strokorbballanischlather_15}. 
Beyond the realizability question, \cite{kabluchkoschlather_10} and \cite{wangroystoev_13} establish some connections between mixing properties of $X$ and decay properties of its TCF $\chi^{(X)}$ when $T=\RR^d$ and $X=\{X_t\}_{t \in \RR^d}$ is stationary and {max-stable}.
It is natural to ask whether even further TCFs will arise if we do not restrict ourselves to the max-stable class, since an affirmative answer would imply a first important reduction for the questions $(A)$-$(C)$.

{\it
\begin{enumerate}[(D)]
\item Is the set of TCFs stemming from max-stable processes  properly contained  in the set of all TCFs or do these sets coincide?
\end{enumerate}
}

Finally, realization problems are usually intimately connected with the question of admissible operations on the quantities of interest. To illustrate this again by means of covariance funcions, note that the product and convex combination of two covariance functions and the pointwise limit of a sequence of covariance functions is again a covariance function. We ask the same question for TCFs.

{\it
\begin{enumerate}[(E)]
\item Is the set of TCFs closed under basic operations such as\\ taking (pointwise) products, convex combinations and limits?
\end{enumerate}
}

In order to deal with the questions above, we establish close connections with $\{0,1\}$-valued processes, polytopes, partitions of sets and combinatorics. 
Recent developments indicate that such tools may appear more frequently in the analysis of extremes, cf.\ \cite{molchanov_08}, \cite{yuenstoev14}, \cite{wangstoev11}, \cite{ehw15} and \cite{thibaudetal15}.

We divide the text into two parts. 

\textbf{Part I} deals with the realization problem of TCFs of stochastic processes $\{X_t\}_{t \in T}$ on arbitrary base spaces $T$. 
Close connections with $\{0,1\}$-valued processes will be established and enter the subsequent considerations. 
We give answers to Questions (D) and (E), partial answers to the Questions (A), (B) and (C) and reduce Question (A) to infinitely (countably) many finite-dimensional problems (in case our base space $T$ is countable). 

\textbf{Part II} deals with these finite-dimensional problems, that is, the realization problem of TCFs of random vectors $\{X_t\}_{t \in T}$ on finite base spaces $T$ with $\lvert T \rvert=n$ for some $n \in \NN$. We are aiming at establishing a (reduced) system of necessary and sufficient conditions for deciding whether a given function is a TCF or not and study the geometry of the set of TCFs. Arguments used in this part will be related to the study of polytopes, are often of combinatorial nature or are based on additional software computations. The latter is a typical phenomenon for realization problems of this kind. 

More detailed descriptions are given at the beginning of each part. Finally, we end with a discussion of our results. 
The appendix contains all tables.\\[4mm]


{\noindent \textbf{\large Part I\\[2mm] The realization problem for TCFs on arbitrary sets $T$}}\\[3mm]


\noindent To start with, Section~\ref{sect:MST_ECF_TM} reviews some preparatory results on max-stable processes, extremal coefficient functions and a particular subclass of max-stable processes, which we called Tawn-Molchanov (TM) processes \citep{strokorbschlather_15}. These processes are important for our analysis, since it turns out that any TCF can be realized by (at least one) TM process, our main result in Section~\ref{sect:realizedTM} and a substantial reduction of the realization problem of TCFs. Section~\ref{sect:realizedTM} also reveals a close connection between the class of TCFs and the class of correlation functions of $\{0,1\}$-valued stochastic processes and addresses the existence of stochastic processes for a prescribed TCF $\chi$ with some minimal regularity properties if $\chi$ is at least continuous. Subsequently,  Section~\ref{sect:props} collects some immediate consequences concerning closure properties of the set of TCFs and the characterization of the set of TCFs by means of finite-dimensional inequalities, our starting point for Part II.

\section{{Max-stable processes, extremal coefficients and TM processes}}\label{sect:MST_ECF_TM}

A stochastic process $X=\{X_t\}_{t \in T}$ is \emph{simple max-stable}, if it has unit Fr{\'e}chet margins (meaning $\PP(X_t \leq x)=\exp(-1/x)$ for all $t \in T$ and $x > 0$), and if the maximum process $\Max_{i=1}^n X^{(i)}$ of independent copies of $X$ has the same finite dimensional distributions (f.d.d.) as the process $n X$ for each $n \in \NN$. 
The crucial point in the realization problem for TCFs will be the close connection of the TCF $\chi^{(X)}$ of a simple max-stable process $X=\{X_t\}_{t \in T}$ to the \emph{extremal coefficient function (ECF)} $\theta^{(X)}$ of the respective process $X$.  Therefore, let $\finite(T)$ denote the \emph{set of finite subsets} of the space $T$. The ECF $\theta^{(X)}$ of a simple max-stable process $X$ on $T$ is a function on $\finite(T)$ that is given by $\theta^{(X)}(\emptyset):=0$ and
\begin{align*}
  \theta^{(X)}(A):= - \tau \log \PP\Big(\Max_{t \in A} X_t \leq \tau \Big), \qquad \tau > 0,
\end{align*}
in case $A \neq \emptyset$. The \rhs is indeed independent of $\tau>0$ and lies in the interval $[1,|A|]$, where $|A|$ denotes the number of elements in $A$. In fact, the value $\theta^{(X)}(A)$ can be interpreted as the effective number of independent random variables in the collection $\{X_t\}_{t \in A}$ (\cf \cite{smith_90,schlathertawn_02}).  We call the set of all possible ECFs of simple max-stable processes
\begin{align}\label{eqn:Theta}
\Theta(T)=\left\{\,\theta^{(X)}: \finite(T) \rightarrow \RR \pmid 
    X \text{ a simple max-stable process on } T\,
\right\}.
\end{align}
The bounded ECFs will be denoted 
\begin{align}\label{eqn:Theta_bounded}
\Theta_b(T)=\left\{\,\theta \in \Theta(T) \pmid \theta \text{ is bounded}\,\right\}.
\end{align}
In fact, the set of ECFs $\Theta(T)$ can be completely characterized by a property called \emph{complete alternation} (\cf Theorem~\ref{thm:ECF_CA} below). Using  the notation and definition from \cite{molchanov_05}, we set for a function $f:\finite(T) \rightarrow \RR$ and elements $K,L \in \finite(T)$ 
\begin{align*}
\left(\Delta_{K}f\right)(L):= f(L)-f(L\cup K).
\end{align*}
Then a function $f: \finite(T) \rightarrow \RR$ is called \emph{completely alternating} on $\finite(T)$ if for all $n \geq 1$, $\{K_1,\dots, K_n\} \subset \finite(T)$ and $K \in \finite(T)$
\begin{align}\label{eqn:defn_CA} 
\left(\Delta_{K_1}\Delta_{K_2} \dots \Delta_{K_n} f \right)(K) = 
\sum_{I \subset \{1, \dots, n\}} (-1)^{|I|} \,f\left(K \cup \bigcup_{i \in I} K_i\right)
 \leq 0.
\end{align}
This condition can be slightly weakened as in Lemma~\ref{lemma:CA_finite} below. Its proof uses the following auxiliary argument.

\begin{lemma}\label{lemma:subsetinduction}
Let $M$ be a finite set and $f:\finite(M) \rightarrow \RR$ be a function on the subsets of $M$. Let $K,L \subset M$ with $K \cap L =\emptyset$.  Then 
\begin{align}\label{eqn:assertionM}
\sum_{I \subset L} (-1)^{|I|+1} f(K \cup I) = \sum_{J \subset (K \cup L)^c} \Big(\sum_{I \subset L \cup J} (-1)^{|I|+1} f((L \cup J)^c \cup I)\Big).
\end{align}
\end{lemma}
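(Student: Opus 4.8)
The plan is to induct on the size of $N := (K\cup L)^c$ (complement taken in $M$), noting that $M$ is the disjoint union of $K$, $L$ and $N$, and that the left-hand side of \eqref{eqn:assertionM}, call it $\Sigma$, equals $\sum_{I\subset L}(-1)^{|I|+1}f(K\cup I)$ and does not involve $N$ at all. For a ground set $M$ abbreviate $\Phi_M(J) := \sum_{I\subset L\cup J}(-1)^{|I|+1}f\bigl((L\cup J)^c\cup I\bigr)$ for $J\subset N$ (complements in $M$), so that the right-hand side is $\sum_{J\subset N}\Phi_M(J)$. If $N=\emptyset$, the only admissible $J$ is $\emptyset$, $(L)^c=K$, and the right-hand side is exactly $\Sigma$; this is the base case.

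For the inductive step pick $x\in N$ and put $M' := M\setminus\{x\}$, so that $(K\cup L)^c$ computed in $M'$ is $N\setminus\{x\}$. Split $\sum_{J\subset N}\Phi_M(J)$ according to whether $x\in J$. For $J'\subset N\setminus\{x\}$ I claim the local identity
\[
\Phi_M(J'\cup\{x\}) \;=\; \Phi_{M'}(J') - \Phi_M(J').
\]
To see this, split each $I\subset L\cup J'\cup\{x\}$ by whether $x\in I$: since $x\in(L\cup J')^c$ in $M$ (as $x\in N$ lies outside $L$ and outside $J'$), the subsets $I$ containing $x$ contribute $\sum_{I'\subset L\cup J'}(-1)^{|I'|+2}f\bigl((L\cup J')^c\cup I'\bigr) = -\Phi_M(J')$, while the subsets not containing $x$ contribute $\sum_{I\subset L\cup J'}(-1)^{|I|+1}f\bigl((L\cup J'\cup\{x\})^c\cup I\bigr)$, and $(L\cup J'\cup\{x\})^c$ in $M$ equals $(L\cup J')^c$ in $M'$, so this is $\Phi_{M'}(J')$. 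Summing the local identity over $J'\subset N\setminus\{x\}$ and adding the $x\notin J$ part $\sum_{J'\subset N\setminus\{x\}}\Phi_M(J')$ yields $\sum_{J\subset N}\Phi_M(J) = \sum_{J'\subset N\setminus\{x\}}\Phi_{M'}(J')$, which by the induction hypothesis applied to $M'$ (note $K\cap L=\emptyset$ still holds) equals $\Sigma$. This closes the induction.

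An alternative, non-inductive route is to expand the right-hand side directly: using $(L\cup J)^c = K\cup(N\setminus J)$ one rewrites it as $\sum_{J\subset N}\sum_{I\subset L\cup J}(-1)^{|I|+1}f\bigl(K\cup(N\setminus J)\cup I\bigr)$, decomposes $I = (I\cap L)\sqcup(I\cap J)$, and observes that $(J,\,I\cap J)\mapsto(N\setminus J,\,I\cap J)$ is a bijection onto ordered pairs of disjoint subsets of $N$; grouping those pairs by their union $R\subset N$ and using $\sum_{Q\subset R}(-1)^{|Q|}=\Eins_{R=\emptyset}$ kills every term except $R=\emptyset$, again leaving $\Sigma$. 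Either way there is no serious obstacle: the only point requiring care is the sign bookkeeping when the point $x$ (resp.\ the point moving between the two blocks of $R$) is added to or removed from a complement, since $(L\cup J')^c$ gains or loses $\{x\}$ according to whether $x\in J'$.
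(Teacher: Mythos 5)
Your proof is correct. Your main argument — induction on the size of $N := (K\cup L)^c$ via the local identity $\Phi_M(J'\cup\{x\}) = \Phi_{M'}(J') - \Phi_M(J')$, whose sum over $J'\subset N\setminus\{x\}$ telescopes — is genuinely different from the paper's, which is a direct cancellation argument with no induction: the paper groups the right-hand side terms by the argument $K\cup A\cup B$ of $f$ (with $A\subset L$, $B\subset (K\cup L)^c$), parametrizes the $(I,J)$ pairs producing a given $(A,B)$ by $C\subset B$ via $I=A\cup(B\setminus C)$, $J=(K\cup L\cup C)^c$, and observes the signs sum to $(-1)^{|A|+1}(1-1)^{|B|}f(K\cup A\cup B)$, which vanishes unless $B=\emptyset$. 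Your ``alternative, non-inductive route'' is essentially this same cancellation in different clothing: your bijection $(J,\,I\cap J)\mapsto(N\setminus J,\,I\cap J)$ onto ordered pairs of disjoint subsets of $N$, followed by grouping by the union $R$ and invoking $\sum_{Q\subset R}(-1)^{|Q|}=\Eins_{R=\emptyset}$, reproduces the paper's computation. The induction is a little longer and requires tracking the ambient set $M$ (since complements change when $x$ is removed), but it localizes the bookkeeping to a single element at a time; the cancellation argument is shorter and avoids any ambient-set gymnastics but requires seeing the right reparametrization globally. Both are complete and correct.
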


\begin{proof}
Each set $(L\cup J)^c\cup I$ occuring on the r.h.s. can be written as a disjoint union $K \cup A \cup B$, with $A \subset L, B \subset (K\cup L)^c$. Let us consider the terms on the r.h.s.\ with fixed $A \subset L$ and fixed $B \subset (K\cup L)^c$.
If $B = \emptyset$, the only possible $I$ and $J$ leading to such a situation are $I=A$ and $J = (K\cup L)^c$, i.e., one obtains the term on the l.h.s.\ with $I = A$.
If $B \neq \emptyset$, the possibilities can be listed as $I = A \cup (B\setminus C)$ and  $J = (K\cup L\cup C)^c$ for some $C \subset B$. Summing these terms over all $C \subset B$ yields $\sum_{C\subset B}(-1)^{|A|+|B\setminus C|+1} f(K \cup A \cup B)= (-1)^{|A|+1}(1-1)^{|B|}f(K \cup A \cup B) = 0$.
\end{proof}

It follows that for finite sets $M$ (instead of arbitrary $T$) complete alternation can be formulated by bounding the value  $f(M)$ by lower order values $f(L)$ for $L \subset M$ as follows (\cf also \cite{schlathertawn_02}, Ineq.~(12)).

\begin{lemma}\label{lemma:CA_finite}
\begin{enumerate}[a)]
\item
A function $f: \finite(T) \rightarrow \RR$ is {completely alternating} on $\finite(T)$ if and only if for all $\emptyset \neq L \in  \finite(T)$ and $K \in \finite(T)$ with $K \cap L = \emptyset$
\begin{align}\label{eqn:short_CA}
  \sum_{I \subset L} (-1)^{|I|+1} f\left(K \cup I\right) \geq 0.
\end{align}
\item  Let $M$ be a non-empty finite set. Then $f: \finite(M) \rightarrow \RR$ is completely alternating if and only if (\ref{eqn:short_CA}) holds for all  $\emptyset \neq L \subset M$ and $K=L^c$, which is equivalent to 
  \begin{align}
    \label{eqn:finite_CA} \Max_{\substack{L \subset M\\|L| \text{ \normalfont odd}}} \, \sum_{\substack{I \subset L\\ I \neq L}} (-1)^{|I|} f\left(L^c \cup I\right)
    & \leq f(M) 
    \leq
    \Min_{\substack{\emptyset \neq L \subset M \\ |L| \text{ \normalfont even}}} \, \sum_{\substack{I \subset L\\ I \neq L}} (-1)^{|I|+1} f\left(L^c \cup I\right).
\end{align}
\end{enumerate}
\end{lemma}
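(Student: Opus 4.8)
The plan is to establish part~(a) first and then derive part~(b) from it together with Lemma~\ref{lemma:subsetinduction}. For the ``only if'' direction of part~(a), I would specialize the defining inequalities \eqref{eqn:defn_CA} to singleton increments: given $\emptyset\neq L=\{x_1,\dots,x_\ell\}\in\finite(T)$ with distinct $x_i$ and $K\in\finite(T)$ disjoint from $L$, apply \eqref{eqn:defn_CA} with $n=\ell$, $K_i=\{x_i\}$ and this same $K$. Since $I\mapsto\{x_i:i\in I\}$ is a cardinality-preserving bijection between the subsets of $\{1,\dots,\ell\}$ and the subsets of $L$, the left-hand side of \eqref{eqn:defn_CA} collapses to $\sum_{I\subset L}(-1)^{|I|}f(K\cup I)$, and the stated inequality is precisely \eqref{eqn:short_CA}.

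The ``if'' direction of part~(a) is the substantive step. I would show, by induction on $\sum_{i=1}^n|K_i|$, that \eqref{eqn:short_CA} forces $(\Delta_{K_1}\cdots\Delta_{K_n}f)(K)=\sum_{I\subset\{1,\dots,n\}}(-1)^{|I|}f\big(K\cup\bigcup_{i\in I}K_i\big)\leq 0$ for all $n\geq 1$ and all $K_1,\dots,K_n,K\in\finite(T)$; it is convenient to use the pairwise commuting operators $\Delta_A=\id-E_A$, where $(E_A g)(L):=g(L\cup A)$. If $K_i\subset K$ for some $i$, one may (by commutativity) assume $i=1$ and write $(\Delta_{K_1}h)(K)=h(K)-h(K\cup K_1)=0$ with $h:=\Delta_{K_2}\cdots\Delta_{K_n}f$. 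If every $K_i=\{x_i\}$ is a singleton with $x_i\notin K$, then either the $x_i$ are pairwise distinct, so that the difference equals $\sum_{I\subset L}(-1)^{|I|}f(K\cup I)$ for $L=\{x_1,\dots,x_n\}$ and is $\leq 0$ by \eqref{eqn:short_CA}, or two of them coincide and $\Delta_{\{x\}}^2=\Delta_{\{x\}}$ lowers $n$ and hence $\sum_i|K_i|$, so the induction hypothesis applies. Finally, if some $|K_{i_0}|\geq 2$, pick $x\in K_{i_0}\setminus K$ and use the operator identity $\Delta_{K_{i_0}}=\Delta_{\{x\}}+E_{\{x\}}\Delta_{K_{i_0}\setminus\{x\}}$ (both sides equal $\id-E_{K_{i_0}}$): it writes $(\Delta_{K_1}\cdots\Delta_{K_n}f)(K)$ as a sum of two higher-order differences, one with $K_{i_0}$ replaced by $\{x\}$ over the base $K$, the other with $K_{i_0}$ replaced by $K_{i_0}\setminus\{x\}$ over the base $K\cup\{x\}$, each having strictly smaller $\sum_i|K_i|$, hence both $\leq 0$ by induction. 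This induction, with its bookkeeping of signs and of the enlarging base set, is the one genuinely delicate point and I expect it to be the main obstacle.

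For part~(b), applying part~(a) with $T=M$ identifies complete alternation of $f$ with the validity of \eqref{eqn:short_CA} for every disjoint pair $K,L\subset M$, $L\neq\emptyset$. To cut this down to the pairs $K=L^c$, I would invoke Lemma~\ref{lemma:subsetinduction}: its identity writes the left-hand side of \eqref{eqn:short_CA} for an arbitrary disjoint pair $(K,L)$ as a sum over $J\subset(K\cup L)^c$ of the analogous quantities for the pairs $\big((L')^c,L'\big)$ with $L'=L\cup J$, and each of these is an instance of \eqref{eqn:short_CA} with (nonempty) increment set $L'$ and base $(L')^c$. Hence nonnegativity for all such special pairs propagates to nonnegativity for every disjoint pair, while the reverse implication is immediate since $K=L^c$ is a special case.

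It then remains to recast ``\eqref{eqn:short_CA} for $K=L^c$, all $\emptyset\neq L\subset M$'' as \eqref{eqn:finite_CA}. For fixed $\emptyset\neq L\subset M$ I would split off in $\sum_{I\subset L}(-1)^{|I|+1}f(L^c\cup I)\geq 0$ the unique term with $I=L$, which equals $(-1)^{|L|+1}f(M)$, and solve for $f(M)$: when $|L|$ is odd this yields the lower bound, when $|L|$ is even the upper bound, for $f(M)$ displayed in \eqref{eqn:finite_CA}. Passing to the maximum over odd $L$ and the minimum over even nonempty $L$ gives \eqref{eqn:finite_CA}, and conversely \eqref{eqn:finite_CA} plainly contains each of these single inequalities, so the two formulations are equivalent.
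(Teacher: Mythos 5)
Your proof is correct, and in parts~(b) and the ``only if'' half of~(a) it coincides with the paper's argument: the same specialization of \eqref{eqn:defn_CA} to distinct singletons, the same appeal to Lemma~\ref{lemma:subsetinduction} to reduce from arbitrary disjoint pairs $(K,L)$ to $K=L^c$, and the same separation of the $I=L$ term by the parity of $|L|$ to obtain \eqref{eqn:finite_CA}.

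The one genuine difference is in the ``if'' half of part~(a). The paper disposes of it in a single line by citing a general semigroup fact (\cite{bcr_84}, Prop.~4.6.6: on an abelian idempotent semigroup generated by a set $G$, complete alternation need only be tested on generators in $G$), using that $(\finite(T),\cup)$ is generated by singletons with $\Delta_{\{t\}}^2=\Delta_{\{t\}}$. You instead prove the reduction directly by induction on $\sum_i|K_i|$, using the commuting operators $\Delta_A=\id-E_A$ and the splitting identity $\Delta_{K_{i_0}}=\Delta_{\{x\}}+E_{\{x\}}\Delta_{K_{i_0}\setminus\{x\}}$ for $x\in K_{i_0}\setminus K$. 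Your case analysis is exhaustive once read hierarchically (if some $K_i\subset K$ the difference vanishes, which also covers $K_i=\emptyset$; otherwise either all $K_i$ are singletons disjoint from $K$, where \eqref{eqn:short_CA} or idempotence applies, or some $|K_{i_0}|\ge2$ and the splitting strictly lowers $\sum_i|K_i|$ in both summands), the signs work out, and the induction is well-founded. What this buys you is a self-contained, elementary argument in place of an external reference; what the paper's route buys is brevity and the observation that the reduction is a purely semigroup-theoretic fact, independent of the particular structure of $\finite(T)$. Both are sound; yours is longer but makes the combinatorial mechanism explicit.
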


\begin{proof}
\begin{enumerate}[a)]
\item Note that $\finite(T)$ forms an abelian semigroup \wrt the union operation that is generated already by the singletons $\{t\}$ for $t \in T$ and that $\Delta_{\{t\}}\Delta_{\{t\}}=\Delta_{\{t\}}$. Therefore, it suffices already to require (\ref{eqn:defn_CA}) only for $K_i=\{t_i\}$ for pairwise distinct elements $t_i \in T$ ($i=1,\dots,n$) (\cf \cite{bcr_84}, Proposition 4.6.6). Set $L=\{t_1,\dots,t_n\}$. Hence $f$ is completely alternating on $\finite(T)$ if and only if for all $\emptyset \neq L \in  \finite(T)$ and $K \in \finite(T)$ the inequality (\ref{eqn:short_CA}) holds. Secondly, the expression on the l.h.s.\ of (\ref{eqn:short_CA}) equals automatically $0$ if $K \cap L \neq \emptyset$.
\item
Because of \eqref{eqn:assertionM},
it suffices to check (\ref{eqn:short_CA}) for $\emptyset \neq L \subset M$ and $K = L^c$. Separating $f(M)$ and summarizing the cases where $|L|$ is odd and where $|L|$ is even yields the second equivalence. 
\end{enumerate}
\end{proof}

The following example shows that the concept of complete alternation is closely linked to the distributions of $\{0,1\}$-valued processes.

\begin{example}[\cite{molchanov_05}, p.~52] \label{example:cap_CA}
Let $Y=\{Y_t\}_{t \in T}$ be a stochastic process with values in $\{0,1\}$ and let the function $C^{(Y)}:\finite(T) \rightarrow [0,1]$ be given by $C^{(Y)}(\emptyset) = 0$ and $C^{(Y)}(A) = \PP(\exists \, t \in A \text{ such that } Y_t = 1)$.
Then $C^{(Y)}$ is completely alternating. Conversely, if $C:\finite(T) \rightarrow [0,1]$ is completely alternating with $C(\emptyset) = 0$, then $C$ determines the \fdd of a stochastic process $Y=\{Y_t\}_{t \in T}$ with values in $\{0,1\}$, such that $C^{(Y)}=C$. 
\end{example}

\begin{remark}[\cite{molchanov_05}, p.~10]
From the perspective of the theory of random sets it is more natural to define a functional $C^{\Xi}(K)=\PP(\Xi \cap K \neq \emptyset)$ for a random closed set $\Xi$ on compact sets $K$. In this case, $C^{\Xi}$ will be termed the \emph{capacity functional} of the random closed set $\Xi$ and is not only completely alternating on compact sets, but also upper semi-continuous in the sense that $C^{\Xi}(K_n) \downarrow C^{\Xi}(K)$ for $K_n \downarrow K$. These properties ensure that $\Xi$ can be defined on a sufficiently regular probability space. 
A priori our considerations below do not include any regularity constraints. However, we will come back to {Question~(C)} in Corollary~\ref{cor:chicty} and Remark~\ref{rk:chicty}. 
\end{remark}

\begin{theorem}[\cite{strokorbschlather_15}, Theorem 8] \label{thm:ECF_CA} 
$\phantom{a}$\\
Let $\theta: \finite(T) \rightarrow \RR$ be a function on the finite subsets of $T$. Then
\begin{align*} 
\theta \in \Theta(T) \qquad \Longleftrightarrow \qquad 
  \left\{\begin{array}{l}
    \theta \text{ is completely alternating, }  \\
    \theta(\emptyset)=0,\\  \theta(\{t\})=1 \text{ for } t \in T.
  \end{array}\right.
\end{align*}
If $\theta \in \Theta(T)$, then there exists a simple max-stable process $X=\{X_t\}_{t \in T}$ on $T$ with ECF $\theta^{(X)}=\theta$, whose \fdd are given by 
\begin{align*}
&-\log \, \PP\left(X_{t_i} \leq x_i\,,\, i=1,\dots,m\right) = \\
&\sum_{k=1}^m\,\sum_{1\leq i_1 < \dots < i_k \leq m} \hspace{-3mm} -\Delta_{\{t_{i_1}\}} \dots \Delta_{\{t_{i_k}\}} \theta \, (\{t_1,\dots,t_m\}\setminus \{t_{i_1},\dots,t_{i_k}\}) \Max_{j \in \{i_1,\dots,i_k\}} \hspace{-3mm}{x_j^{-1}}.
\end{align*}
\end{theorem}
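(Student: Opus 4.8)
The plan is to prove the two implications of the equivalence separately; the displayed formula for the f.d.d.\ will come out of the construction used for the ``if'' part. The ``only if'' part is short, and the bulk of the work is the ``if'' part.

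\emph{Forward direction.} Let $\theta=\theta^{(X)}$ for a simple max-stable $X$. Then $\theta(\emptyset)=0$ by definition and $\theta(\{t\})=-\tau\log\PP(X_t\le\tau)=1$ by the unit Fr\'echet margins. For complete alternation, observe that the stated $\tau$-independence of $-\tau\log\PP(\Max_{t\in A}X_t\le\tau)$ says precisely that $\PP(\Max_{t\in A}X_t\le\tau)=\exp(-\theta^{(X)}(A)/\tau)$, so for each $n\in\NN$ the quantity $n\bigl(1-\PP(\Max_{t\in A}X_t\le n)\bigr)=n\bigl(1-e^{-\theta^{(X)}(A)/n}\bigr)$ tends to $\theta^{(X)}(A)$ as $n\to\infty$. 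For fixed $n$, the set function $A\mapsto 1-\PP(\Max_{t\in A}X_t\le n)=\PP(\exists\,t\in A:X_t>n)$ is the functional $C^{(Y^{(n)})}$ of the $\{0,1\}$-valued process $Y^{(n)}_t:=\Eins\{X_t>n\}$, hence completely alternating by Example~\ref{example:cap_CA}; multiplying by $n>0$ and taking the pointwise limit $n\to\infty$ preserves the inequalities \eqref{eqn:defn_CA}, so $\theta^{(X)}$ is completely alternating.

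\emph{Reverse direction: the candidate process.} Let $\theta$ be completely alternating with $\theta(\emptyset)=0$ and $\theta(\{t\})=1$. For $\emptyset\ne J\in\finite(T)$ write $\Delta_J=\prod_{s\in J}\Delta_{\{s\}}$ (well defined, since the $\Delta_{\{s\}}$ commute and are idempotent), so that the coefficient appearing in the theorem's formula is $c_J^{(M)}:=-(\Delta_J\theta)(M\setminus J)\ge 0$ for $\emptyset\ne J\subseteq M\in\finite(T)$, the nonnegativity coming from \eqref{eqn:defn_CA} (equivalently Lemma~\ref{lemma:CA_finite}(a)). The proposed f.d.d.\ for a tuple $t_1,\dots,t_m$, with $M=\{t_1,\dots,t_m\}$, is $G_M=\exp(-V_M)$ where $V_M(x)=\sum_{\emptyset\ne J\subseteq M}c_J^{(M)}\Max_{t_i\in J}x_i^{-1}$ on $(0,\infty)^m$. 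The key point is that $G_M$ is a genuine c.d.f.: each factor $\exp(-c_J^{(M)}\Max_{t_i\in J}x_i^{-1})$ is the c.d.f.\ of the random vector equal to $c_J^{(M)}Z$ (a single unit Fr\'echet $Z$) on the coordinates in $J$ and to $0$ elsewhere, so $G_M=\prod_J\exp(-c_J^{(M)}\Max_{t_i\in J}x_i^{-1})$ ($J$ over nonempty subsets of $M$) is the c.d.f.\ of $\bigl(\Max_{J\ni t_i}c_J^{(M)}Z_J\bigr)_{i=1}^m$ with independent unit Fr\'echet $Z_J$; moreover $V_M$ is homogeneous of degree $-1$, so $G_M(x)^n=G_M(x/n)$.

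\emph{Reverse direction: consistency, margins, realization.} All remaining identities flow from the one-line combinatorial fact
\begin{align*}
\sum_{J\subseteq A}(\Delta_J h)(A\setminus J)=h(\emptyset)\qquad\text{for every }A\in\finite(T)\text{ and }h:\finite(T)\to\RR,
\end{align*}
obtained by expanding $(\Delta_J h)(A\setminus J)=\sum_{I\subseteq J}(-1)^{|I|}h(A\setminus(J\setminus I))$, re-indexing by $D=J\setminus I$, and using $\sum_{I\subseteq A\setminus D}(-1)^{|I|}=\Eins\{D=A\}$. With $h=\theta$ this gives $\sum_{\emptyset\ne J\subseteq A}c_J^{(A)}=\theta(A)$; with $h=\Delta_{\{t_k\}}\theta$ and $A=M\setminus\{t_k\}$ it gives $\sum_{J\ni t_k,\,J\subseteq M}c_J^{(M)}=-(\Delta_{\{t_k\}}\theta)(\emptyset)=\theta(\{t_k\})-\theta(\emptyset)=1$, so the $k$-th margin of $G_M$ is unit Fr\'echet; and, splitting the sum defining $V_M$ according to whether $t_m\in J$ and using $(\Delta_{\{t_m\}}g)(L)=g(L)-g(L\cup\{t_m\})$, one obtains $c_B^{(M)}+c_{B\cup\{t_m\}}^{(M)}=c_B^{(M\setminus\{t_m\})}$ for $\emptyset\ne B\subseteq M\setminus\{t_m\}$ (the term $J=\{t_m\}$ dropping out once $x_m=\infty$), i.e.\ $V_M(x_1,\dots,x_{m-1},\infty)=V_{M\setminus\{t_m\}}(x_1,\dots,x_{m-1})$. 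Hence $\{G_M\}$ is a consistent and (obviously) symmetric family of c.d.f.s with unit Fr\'echet margins; Kolmogorov's extension theorem yields a process $X=\{X_t\}_{t\in T}$ with these f.d.d., which is simple max-stable by the homogeneity of $V_M$, and finally $\theta^{(X)}(A)=-\log G_A(1,\dots,1)=\sum_{\emptyset\ne J\subseteq A}c_J^{(A)}=\theta(A)$.

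\emph{Where the difficulty sits.} The only substantive part is the reverse direction, and there the real work is the combinatorial bookkeeping for the difference operators: it must deliver simultaneously the nonnegativity of the $c_J^{(M)}$, the unit Fr\'echet margins (equivalently $\sum_{J\ni t_k}c_J^{(M)}=1$), and --- most delicately --- the consistency of $\{G_M\}$ under marginalization. Conceptually this is the statement that a completely alternating $\theta$ is nothing but a nonnegative expansion of an exponent function over the ``block'' functionals $(x_i)\mapsto\Max_{i\in J}x_i^{-1}$, i.e.\ a Tawn--Molchanov spectral representation; once this is recognized, the c.d.f.\ property, homogeneity, the Kolmogorov step, and the whole forward direction become routine. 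One may also bypass Kolmogorov by reading off $X$ directly as $X_t=\Max_J c_J Z_J$ on each finite tuple, making the max-stable structure transparent, provided one has first verified that the coefficients are globally consistent --- which is exactly the content of the displayed identity.
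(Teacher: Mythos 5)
The paper cites this theorem from \cite{strokorbschlather_15} without reproducing a proof, so there is nothing internal to compare against; judged on its own merits, your argument is correct. You have reconstructed the standard Tawn--Molchanov argument that the cited reference itself uses: the forward implication via the completely alternating capacity functionals of the exceedance indicators $\Eins\{X_t > n\}$ and a limit, and the reverse implication via the spectral representation $X_{t_i} = \Max_{J \ni t_i} c^{(M)}_J Z_J$ with nonnegative coefficients $c^{(M)}_J = -(\Delta_J\theta)(M\setminus J)$, whose nonnegativity, normalization $\sum_{J\ni t_k} c_J^{(M)}=1$, and marginal consistency $c_B^{(M)}+c_{B\cup\{t_m\}}^{(M)}=c_B^{(M\setminus\{t_m\})}$ all drop out of the single M\"obius-type identity $\sum_{J\subseteq A}(\Delta_J h)(A\setminus J)=h(\emptyset)$. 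All the steps (the product-of-c.d.f.s argument, homogeneity for max-stability, Kolmogorov consistency, and the final check $\theta^{(X)}=\theta$) are verified correctly; this is the same route as the source.
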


If a process $\{X_t\}_{t \in T}$ has the f.d.d.\ stated in Theorem~\ref{thm:ECF_CA}, then it is called \emph{Tawn-Molchanov process (TM process) associated with the ECF $\theta$} henceforth. Note that this convention and the notation from \cite{MolchanovStrokorb} differ in the sense that \cite{MolchanovStrokorb} consider TM processes with at least upper-semi continuous sample paths.
By construction, the class of f.d.d.'s of TM processes on a space $T$ is in a one-to-one cor\-res\-pondence with the set of ECFs $\Theta(T)$.
In fact, if $\theta \in \Theta(T)$ and $X$ is an associated TM process, the process $X$ takes a unique role among simple max-stable processes sharing the same ECF $\theta$ in that it provides a sharp lower bound for the \fdd \citep[Corollary 33]{strokorbschlather_15}.

\begin{corollary}[\cite{strokorbschlather_15}, Corollaries 13 and 14] \label{cor:Theta_convex_compact}
The set of ECFs $\Theta(T)$ is convex and compact \wrt the topology of pointwise convergence on $\RR^{\finite(T)}$. 
\end{corollary}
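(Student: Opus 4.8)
The plan is to read off both properties from the characterization of $\Theta(T)$ in Theorem~\ref{thm:ECF_CA}, which exhibits $\Theta(T)$ as the set of all $f \in \RR^{\finite(T)}$ satisfying (i) the complete alternation inequalities, conveniently in the weakened form \eqref{eqn:short_CA} of Lemma~\ref{lemma:CA_finite}; (ii) $f(\emptyset) = 0$; and (iii) $f(\{t\}) = 1$ for every $t \in T$. Convexity is then immediate: every condition in (i)--(iii) is affine in $f$, since each inequality in \eqref{eqn:short_CA} reads $\sum_{I \subset L}(-1)^{|I|} f(K \cup I) \leq 0$, a linear functional of $f$ depending only on the finitely many coordinates indexed by $\{K \cup I : I \subset L\}$, while (ii) and (iii) are affine equalities. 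Hence $\Theta(T)$ is an intersection of closed half-spaces and hyperplanes of $\RR^{\finite(T)}$, in particular convex. (Alternatively, convexity has a direct probabilistic proof: if $X,X'$ are independent simple max-stable processes on $T$ with ECFs $\theta,\theta'$ and $\lambda \in [0,1]$, then $Y_t := \max(\lambda X_t, (1-\lambda) X'_t)$ defines a simple max-stable process with $\theta^{(Y)} = \lambda\theta + (1-\lambda)\theta'$, as one checks from the defining formula for the ECF and the independence of $X$ and $X'$.)

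For compactness, I would equip $\RR^{\finite(T)}$ with the product topology, which is precisely the topology of pointwise convergence. First record the a priori bounds $0 \leq \theta(A) \leq |A|$ for all $A \in \finite(T)$ and all $\theta \in \Theta(T)$: the case $n=1$ of \eqref{eqn:defn_CA} gives $f(K) \leq f(K\cup K_1)$, so $f$ is non-decreasing and therefore $\theta(A) \geq \theta(\{t\}) = 1$ for $t \in A$ (and $\theta(A)\geq\theta(\emptyset)=0$ in general); the case $n=2$ gives $f(B\cup C) + f(B\cap C) \leq f(B) + f(C)$, hence $\theta(B\cup C) \leq \theta(B) + \theta(C)$ for disjoint $B,C$, and an induction on $|A|$ yields $\theta(A) \leq \sum_{t \in A}\theta(\{t\}) = |A|$. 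Consequently $\Theta(T) \subseteq \prod_{A \in \finite(T)} [0,|A|]$, which is compact by Tychonoff's theorem.

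It then remains to show $\Theta(T)$ is closed in $\RR^{\finite(T)}$. Each of the defining conditions (i)--(iii) is a closed condition, being the preimage of $(-\infty,0]$, respectively of $\{0\}$ or $\{1\}$, under a continuous map that factors through finitely many coordinate projections; an arbitrary intersection of closed sets is closed, so $\Theta(T)$ is closed. Being a closed subset of the compact set $\prod_{A}[0,|A|]$, it is compact, which completes the argument.

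The only step that is not pure soft reasoning is the derivation of the uniform upper bound $\theta(A) \leq |A|$ from complete alternation and the normalizations; but this is just the standard monotonicity and subadditivity of completely alternating functions, and is in any case already the content of the remark (following the setup of Theorem~\ref{thm:ECF_CA}) that $\theta^{(X)}(A) \in [1,|A|]$. Everything else reduces to Tychonoff's theorem together with the observation that the characterizing conditions in Theorem~\ref{thm:ECF_CA} are both affine and closed.
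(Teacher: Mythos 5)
Your proof is correct and complete. Note that the paper does not prove this statement itself: it cites it as Corollaries 13 and 14 of \cite{strokorbschlather_15}, so there is no proof in the present text to compare against directly. Your argument, however, is the natural one given the surrounding material, and I would expect the cited source to proceed along essentially the same lines: Theorem~\ref{thm:ECF_CA} exhibits $\Theta(T)$ as the solution set of the affine conditions $f(\emptyset)=0$, $f(\{t\})=1$, and the complete alternation inequalities, each of which depends only on finitely many coordinates of $f$ and so is closed (indeed a closed half-space or affine hyperplane) in the product topology; convexity and closedness follow immediately. The a priori bound $0\leq\theta(A)\leq|A|$ is already recorded in the text right after the definition of the ECF, and your derivation of it from monotonicity ($n=1$) and subadditivity ($n=2$, applied to disjoint sets and iterated) is the standard one. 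Combined with Tychonoff's theorem applied to $\prod_{A\in\finite(T)}[0,|A|]$, this gives compactness. Your alternative probabilistic proof of convexity via $Y_t=\max(\lambda X_t,(1-\lambda)X'_t)$ for independent $X,X'$ is also correct and is a nice direct argument that does not pass through the characterization theorem at all. No gaps.
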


\noindent The connection of the TCF $\chi^{(X)}$ to the second-order extremal coefficients of a simple max-stable process $X$ is given by
\begin{align}
\notag \chi^{(X)}(s,t) &= 2- \lim_{\tau \to \infty} \frac{1-\PP\left(X_s \leq \tau, X_t \leq \tau \right)}{1-\PP\left(X_t \leq \tau \right)} \\
\label{eqn:TCFfromECF}
&=  2- \frac{\log \PP\left(X_s \leq \tau, X_t \leq \tau \right)}{\log \PP\left(X_t \leq \tau \right)}
=2-\theta^{(X)}(\{s,t\}).
\end{align}
Therefore, it will be convenient to introduce the following map
\begin{align}\label{eqn:psi}
\psi:\RR^{\finite(T)} \rightarrow \RR^{T \times T}, \qquad \psi(F)(s,t):=2-F(\{s,t\}),
\end{align}
such that (\ref{eqn:TCFfromECF}) reads as $\chi^{(X)}=\psi(\theta^{(X)})$. Note that $\psi$ is continuous if we equip both spaces $\RR^{\finite(T)}$ and $\RR^{T \times T}$ with the topology of pointwise convergence.
Finally, we restate a continuity result from \cite{strokorbschlather_15} in terms of TCFs (instead of ECFs as in the reference).

\begin{corollary}[\cite{strokorbschlather_15}, Theorem 25] \label{cor:TMcty} 
$\phantom{a}$\\
Let $X=\{X_t\}_{t \in T}$ be a TM process and $\chi^{(X)}$ its TCF. 
Then the following statements are equivalent:
\begin{enumerate}[(i)]
\item $\chi^{(X)}$ is continuous.
\item $\chi^{(X)}$ is continuous on the diagonal $\{(t,t):t \in T\}$.
\item $X$ is stochastically continuous.
\end{enumerate}
\end{corollary}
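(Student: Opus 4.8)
The plan is to obtain the three equivalences directly from the corresponding statement for the extremal coefficient function in \cite{strokorbschlather_15}, Theorem~25, by means of the identity \eqref{eqn:TCFfromECF}.

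First I would record the single structural input that is needed, namely that the TCF and the second-order extremal coefficients of $X$ are linked by $\chi^{(X)}(s,t) = 2 - \theta^{(X)}(\{s,t\})$ for all $s,t \in T$, that is $\chi^{(X)} = \psi(\theta^{(X)})$ with the map $\psi$ from \eqref{eqn:psi}. Since the assignment $(s,t) \mapsto 2 - \theta^{(X)}(\{s,t\})$ differs from $(s,t) \mapsto \theta^{(X)}(\{s,t\})$ only through the fixed affine bijection $x \mapsto 2 - x$ of $\RR$, the function $\chi^{(X)}$ is continuous at a point $(s_0,t_0) \in T \times T$ if and only if $(s,t) \mapsto \theta^{(X)}(\{s,t\})$ is continuous at $(s_0,t_0)$. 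In particular, $\chi^{(X)}$ is continuous on all of $T \times T$, respectively on the diagonal $\{(t,t) : t \in T\}$, if and only if the second-order extremal coefficients of $X$ have the same continuity property.

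It then remains to invoke Theorem~25 of \cite{strokorbschlather_15}, which states that for a TM process $X$ the extremal coefficient function $\theta^{(X)}$ is continuous, if and only if it is continuous on the diagonal, if and only if $X$ is stochastically continuous. Reading the first two items through the affine change of variables above turns them into statements (i) and (ii), while (iii) is unchanged; this yields the claimed equivalence.

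The substance of the result lies entirely in the cited theorem, and the only implication there that is not immediate is that continuity of $\chi^{(X)}$ merely on the diagonal --- equivalently, $\chi^{(X)}(s,t) \to 1$ whenever $s \to t_0$ and $t \to t_0$ --- already forces continuity of $\chi^{(X)}$ on all of $T \times T$ and stochastic continuity of $X$. This is a special feature of TM processes: their finite-dimensional distributions are given explicitly in terms of the ECF by Theorem~\ref{thm:ECF_CA}, so that near-diagonal control of the two-point extremal coefficients, together with complete alternation, propagates to control of all finite-dimensional distributions. Reproducing this propagation argument is the main obstacle one would face if a proof not relying on the reference were wanted.
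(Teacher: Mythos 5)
Your proposal is correct and takes essentially the same approach as the paper: the paper gives no separate proof but simply restates Theorem~25 of \cite{strokorbschlather_15} in terms of $\chi^{(X)}=2-\theta^{(X)}(\{\cdot,\cdot\})$, and you make explicit that this translation is an affine bijection preserving (pointwise and diagonal) continuity, which is exactly the content needed.
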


\begin{remark}
In fact, a TM process $X=\{X_t\}_{t \in T}$ is always stochastically continuous with respect to the semimetric $\eta^{X}(s,t)=1-\chi^{(X)}(s,t)$.
\end{remark}



\section{TCFs are realized by TM processes}\label{sect:realizedTM}

In order to simplify the realization problem for TCFs (termed as Questions (A) to (E) in the introduction) it is desirable to find a subclass of stochastic processes which can realize any given TCF $\chi$. We denote the set of all TCFs and certain subclasses as follows:
\begin{align*}
\tcf(T)&:=\left\{\chi^{(X)} \pmid \begin{array}{l}\text{$X$ a stochastic process on $T$ with identical} \\ \text{one-dimensional margins and existing \ $\chi^{(X)}$} \end{array}\right\},\\
\tcf_\infty(T)&:=\left\{\chi^{(X)} \in \tcf(T) \pmid \text{$X$ with essential supremum $\infty$}\right\},\\
\maxstab(T)&:=\left\{\chi^{(X)} \in \tcf(T) \pmid \text{$X$ simple max-stable}\right\},\\
\tm(T)&:=\left\{\chi^{(X)} \in \tcf(T) \pmid \text{$X$ a TM process}\right\}.
\end{align*}

\begin{remark}
The class $\tcf_\infty(T)$ represents the TCFs of processes whose margins have no jump at the upper endpoint. To see this, first note that a distribution function $F:\RR\rightarrow [0,1]$ has no jump at its upper endpoint $u \in (-\infty,\infty]$ if and only if there exists a continuous strictly increasing transformation $f:(-\infty,u)\rightarrow \RR$ such that $F \circ f^{-1}$ is a distribution function with upper endpoint $\infty$, and secondly, $\chi^{(X)} = \chi^{(f \circ X)}$ if $X$ is a stochastic process with marginal distribution $F$ and TCF $\chi^{(X)}$.
\end{remark}

\noindent A priori it is clear that
\begin{align}\label{eqn:MAXinTCF}
\tm(T) \subset \maxstab(T)\subset \tcf_\infty(T)\subset \tcf(T).
\end{align}
Further, let us introduce the class of uncentered and normalized covariance functions of binary processes
\begin{align}\label{eqn:BIN_defn}
\bin(T)&:=\left\{(s,t) \mapsto \PP(Y_s = 1 | Y_t = 1) \pmid \begin{array}{l}\text{$Y$ a stochastic process on $T$ with} \\ \text{identical one-dimensional margins}\\ \text{with values in $\{0,1\}$ and $\EE Y_t \neq 0$} \end{array}\right\},
\end{align}
which is closely related to the above classes. By definition of $\tcf(T)$ and considering the processes $Y_t = \Eins_{X_t > \tau}$ indexed by $\tau>0$, we observe
\begin{align}\label{eqn:TCFinSCBIN}
\tcf(T)\,  \subset \text{  sequential closure of  \,} \bin(T),
\end{align} 
where the sequential closure is meant \wrt pointwise convergence. The following theorem gives an affirmative answer to the question whether $\tcf(T)$ and $\maxstab(T)$ coincide \emph{(Question (D) in the Introduction)} and yields also the connection to the other classes. In fact, the class of TM processes can realize already any given TCF.  

\begin{theorem}\label{thm:TCFisMAX}
\begin{enumerate}[a)]
\item For arbitrary sets $T$ the following classes  coincide
\begin{align}
\bin(T)&=\psi(\Theta_b(T)),\\[1mm]
\notag \tcf(T)&=\tcf_\infty(T)=\maxstab(T)=\tm(T)=\psi(\Theta(T))\\
\label{eqn:TCFisMAX} &=\text{  sequential closure of  \,} \bin(T) = \text{  closure of  \,} \bin(T),
\end{align}
where the map $\psi$ is from (\ref{eqn:psi}), $\Theta(T)$ and $\Theta_b(T)$ are from (\ref{eqn:Theta}) and (\ref{eqn:Theta_bounded}), respectively,  and
the (sequential) closure is meant \wrt pointwise convergence.
\item For infinite sets $T$ the inclusion $\bin(T) \subsetneq \tcf(T)$ is  proper.
\item For finite sets $M$ the equality $\bin(M)=\tcf(M)$ holds.
\end{enumerate}
\end{theorem}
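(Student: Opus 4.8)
The plan is to reduce everything to the single identity $\bin(T)=\psi(\Theta_b(T))$ together with a short topological sandwich, with the independent TCF $\delta_{st}$ supplying the separating example in b). First I would prove $\bin(T)=\psi(\Theta_b(T))$ by rescaling via Example~\ref{example:cap_CA}. If $Y$ is a binary process with identical margins and $p:=\PP(Y_t=1)\in(0,1]$, then $C^{(Y)}$ is completely alternating with $C^{(Y)}(\emptyset)=0$, so $\theta:=C^{(Y)}/p$ is completely alternating, has $\theta(\emptyset)=0$, $\theta(\{t\})=1$, and is bounded by $1/p$; by Theorem~\ref{thm:ECF_CA}, $\theta\in\Theta_b(T)$. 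The inclusion--exclusion identity $C^{(Y)}(\{s,t\})=2p-\PP(Y_s=1,Y_t=1)$ then gives $\psi(\theta)(s,t)=2-\theta(\{s,t\})=\PP(Y_s=1\mid Y_t=1)$ for $s\ne t$ and $\psi(\theta)(t,t)=1$, so $\bin(T)\subseteq\psi(\Theta_b(T))$. Conversely, given $\theta\in\Theta_b(T)$ bounded by $M\geq1$, complete alternation forces $\theta$ nondecreasing, hence $\theta\geq\theta(\emptyset)=0$; then $C:=\theta/M$ takes values in $[0,1]$, is completely alternating and vanishes at $\emptyset$, so by Example~\ref{example:cap_CA} it equals $C^{(Y)}$ for some binary process $Y$ with $\PP(Y_t=1)=1/M$, and the same computation gives $\psi(\theta)\in\bin(T)$.

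Next I would record the cheap identities and close the loop for a). From \eqref{eqn:TCFfromECF} and the definition \eqref{eqn:Theta} we have $\maxstab(T)=\psi(\Theta(T))$, and since Theorem~\ref{thm:ECF_CA} produces, for \emph{every} $\theta\in\Theta(T)$, an associated TM process, also $\tm(T)=\psi(\Theta(T))$. Because $\psi$ is continuous and $\Theta(T)$ is compact (Corollary~\ref{cor:Theta_convex_compact}), $\psi(\Theta(T))$ is closed in $\RR^{T\times T}$. Combining the a priori inclusions \eqref{eqn:MAXinTCF} and \eqref{eqn:TCFinSCBIN} with the previous step,
\begin{align*}
\psi(\Theta(T))=\tm(T)&\subseteq\maxstab(T)\subseteq\tcf_\infty(T)\subseteq\tcf(T)\\
&\subseteq\text{sequential closure of }\bin(T)\ \subseteq\ \text{closure of }\bin(T)\\
&=\text{closure of }\psi(\Theta_b(T))\ \subseteq\ \text{closure of }\psi(\Theta(T))=\psi(\Theta(T)),
\end{align*}
where the penultimate inclusion uses $\Theta_b(T)\subseteq\Theta(T)$ and the general fact that sequential closure is contained in closure. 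Since the chain closes up, all listed sets coincide, which is a).

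For b), I would take $\chi=\delta_{st}$, which is a TCF (realized by an i.i.d.\ process, equivalently $\psi$ of the unbounded ECF $A\mapsto\lvert A\rvert\in\Theta(T)$). If $\delta_{st}$ lay in $\bin(T)=\psi(\Theta_b(T))$, the first step would produce a binary process $Y$ with $\PP(Y_t=1)=p>0$ for all $t$ and $\PP(Y_s=1,Y_t=1)=0$ for $s\ne t$, so the events $\{Y_t=1\}$, $t\in T$, would be pairwise disjoint of probability $p$ -- impossible on a probability space when $T$ is infinite. For c), if $M$ is finite then $\finite(M)$ is finite, hence every function on it is bounded, $\Theta_b(M)=\Theta(M)$, and therefore $\bin(M)=\psi(\Theta_b(M))=\psi(\Theta(M))=\tcf(M)$ by a).

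The hard part is not any single deep step but getting the first step exactly right -- nonnegativity and monotonicity of completely alternating functions, the choice of the rescaling constant, and the conditional-probability identity -- and then observing that the two a priori bounds $\tcf(T)\subseteq\text{closure of }\bin(T)$ and $\text{closure of }\bin(T)\subseteq\psi(\Theta(T))$ pinch everything, in particular forcing the sequential closure and the closure of $\bin(T)$ to agree.
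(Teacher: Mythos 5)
Your proof is correct and follows essentially the same route as the paper: first establish $\bin(T)=\psi(\Theta_b(T))$ by the rescaling argument via Example~\ref{example:cap_CA} and Theorem~\ref{thm:ECF_CA}, identify $\maxstab(T)=\tm(T)=\psi(\Theta(T))$, and then close the cycle of inclusions using compactness of $\Theta(T)$ and continuity of $\psi$ to pin down the (sequential) closure of $\bin(T)$; parts b) and c) are likewise the same as the paper (the i.i.d.\ TCF $\delta_{st}$ and the trivial boundedness $\Theta(M)=\Theta_b(M)$ for finite $M$). The only cosmetic difference is that you spell out the nonnegativity/monotonicity consequence of complete alternation and the inclusion--exclusion identity explicitly where the paper leaves them implicit.
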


\begin{proof}
\begin{enumerate}[a)]
\item
First, we establish $\bin(T)=\psi(\Theta_b(T))$: 

Let $f \in \bin(T)$ and let $Y$ be a corresponding process with values in $\{0,1\}$ as in the definition of $\bin(T)$ (\cf (\ref{eqn:BIN_defn})). 
Let the function $C^{(Y)}:\finite(T) \rightarrow [0,1]$ be given by $C^{(Y)}(\emptyset) = 0$ and $C^{(Y)}(A) = \PP(\exists \, t \in A \text{ such that } Y_t = 1)$
as in Example~\ref{example:cap_CA}. Then $C(\{t\})=\EE Y_t$ lies in the interval $(0,1]$ and is independent of $t \in T$ due to identical one-dimensional margins. 
Further, the function $f$ is given by $f(s,t)=\PP(Y_s = 1 \mid Y_t = 1) = 2-C(\{s,t\})/C(\{t\})$. Now, set $\theta(A):=C(A)/C(\{t\})$ for $A \in \finite(T)$. Then $\theta$ satisfies $\psi(\theta)(s,t)=2-\theta(\{s,t\})=f(s,t)$ and $\theta$ is clearly bounded by $1/C(\{t\})$. It follows from Example~\ref{example:cap_CA} and Theorem~\ref{thm:ECF_CA} that $\theta$ lies in $\Theta(T)$. Hence, $f \in \psi(\Theta_b(T))$.

Conversely, let $\theta \in \Theta_b(T)$ be bounded, say by $\kappa$. Clearly, $\kappa \geq \theta(\{t\})=1$. Set $C(A):=\theta(A)/\kappa$. Then $C$ satisfies all requirements of Example~\ref{example:cap_CA} to determine the f.d.d.\ of  a binary process $Y$ with values in $\{0,1\}$ with $C^{(Y)}=C$. The process $Y$ has identical one-dimensional margins since $\theta(\{t\})=1$ for $t \in T$, and $\EE Y_t = 1/\kappa > 0$. So $Y$ fulfills the requirements of a process in the definition of $\bin(T)$. Finally, note that the corresponding function in $\bin(T)$ is given by $\PP(Y_s = 1 \mid Y_t = 1)=2-C(\{s,t\})/C(\{t\})=\psi(\theta)(s,t)$ as desired.

Secondly, the equality $\maxstab(T)=\tm(T)=\psi(\Theta(T))$ follows directly from Theorem~\ref{thm:ECF_CA}. On the one hand this implies 
\begin{align*}
\bin(T) = \psi(\Theta_b(T)) \subset \psi(\Theta(T))=\tm(T),
\end{align*}
and on the other hand, we obtain that $\tm(T)$ is compact, as it is the image of the compact set $\Theta(T)$ (Corollary~\ref{cor:Theta_convex_compact}) under the continuous map $\psi$. Now, the assertion (\ref{eqn:TCFisMAX}) follows from
\begin{align*}
& \tcf(T) \stackrel{(\ref{eqn:TCFinSCBIN})}{\subset} \text{  sequential closure of  \,} \bin(T) \subset \text{ closure of  \,} \bin(T) \\
& {\subset} \text{ closure of  \,} \tm(T)  \subset \, \tm(T) \stackrel{(\ref{eqn:MAXinTCF})}{\subset}  \, \maxstab(T) 
 \stackrel{(\ref{eqn:MAXinTCF})}{\subset} \, \tcf_\infty(T) 
 \stackrel{(\ref{eqn:MAXinTCF})}{\subset} \, \tcf(T). 
\end{align*}
\item Let $T$ be an infinite set and let $\chi(s,t):=\delta_{st}$. Indeed $\chi$ is an element of $\maxstab(T)$ realized by the simple max-stable process $X$ on $T$, where the variables $\{X_{t}\}_{t \in T}$ are \iid standard Fr{\'e}chet random variables. Suppose that $\chi \in \bin(T)$. Then $\PP(Y_s=1,Y_t=1)=0$ for all $s,t \in T$ with $s \neq t$. Thus, $\PP(\bigcup_{s \in S} \{Y_s=1\})=\sum_{s \in S} \PP(Y_s=1)=\infty$ for any countably infinite subset $S \subset T$, a contradiction.
\item If $M$ is finite,  elements of $\Theta(M)$ are automatically bounded by $|M|$ and thus, $\Theta(M)=\Theta_b(M)$. 
\end{enumerate}
\end{proof}

The latter result does not include any regularity considerations beyond the product topology that is somewhat unnatural in infinite-dimensional stochastic contexts. However, in view of Corollary~\ref{cor:TMcty}, it is possible to identify the role of continuous TCFs in this realization problem and hence address {Question (C)} as follows.

\begin{corollary}\label{cor:chicty}
Let $\chi \in \tcf(T)$. Then the following statements are equivalent.
\begin{enumerate}[(i)]
\item $\chi$ is continuous.
\item $\chi$ is continuous on the diagonal $\{(t,t) \,:\, t \in T\}$.
\item There exists a stochastically continuous stochastic process $\{X_t\}_{t \in T}$\\ with TCF $\chi^{(X)}=\chi$. 
\end{enumerate}
\end{corollary}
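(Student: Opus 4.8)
The plan is to deduce Corollary~\ref{cor:chicty} from Theorem~\ref{thm:TCFisMAX} together with the continuity dichotomy for TM processes in Corollary~\ref{cor:TMcty}. The implications $(i)\Rightarrow(ii)$ and $(iii)\Rightarrow(i)$ are the easy directions: the first is trivial since the diagonal is a subset of $T\times T$; the third follows from the fact that if $\{X_t\}_{t\in T}$ is stochastically continuous, then the maps $t\mapsto \PP(X_s>\tau, X_t>\tau)$ and $t\mapsto \PP(X_t>\tau)$ behave continuously enough — more precisely one argues that $\chi^{(X)}$ is a limit (in the product topology) of the functions $(s,t)\mapsto \PP(X_s>\tau\mid X_t>\tau)$, which are separately continuous because stochastic continuity controls $\PP(X_s>\tau, X_t>\tau)$. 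Actually, the cleanest route for $(iii)\Rightarrow(i)$ is to observe that any $\chi\in\tcf(T)$ equals the TCF of a TM process (Theorem~\ref{thm:TCFisMAX}), so it suffices to prove $(ii)\Rightarrow(iii)$ and $(iii)\Rightarrow(i)$, and for the latter invoke that stochastic continuity of \emph{the realizing TM process} is what Corollary~\ref{cor:TMcty} links to continuity of $\chi$; but since we are given an arbitrary realizing $X$, I would instead argue $(iii)\Rightarrow(ii)$ directly and close the cycle through the TM process.

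Here is the cycle I would actually carry out. First, $(i)\Rightarrow(ii)$ is immediate. Second, for $(ii)\Rightarrow(iii)$: given $\chi\in\tcf(T)$ continuous on the diagonal, Theorem~\ref{thm:TCFisMAX} gives $\chi=\psi(\theta)$ for some $\theta\in\Theta(T)$, and by Theorem~\ref{thm:ECF_CA} there is a TM process $X$ with $\chi^{(X)}=\chi$. Since $\chi=\chi^{(X)}$ is continuous on the diagonal by hypothesis, Corollary~\ref{cor:TMcty} (the equivalence of its (ii) and (iii)) tells us $X$ is stochastically continuous; this $X$ witnesses (iii). Third, for $(iii)\Rightarrow(i)$: let $\{X_t\}_{t\in T}$ be stochastically continuous with $\chi^{(X)}=\chi$. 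Consider the binary processes $Y^{(\tau)}_t=\Eins_{X_t>\tau}$. For fixed $\tau$, stochastic continuity of $X$ implies that $(s,t)\mapsto\PP(X_s>\tau,\,X_t>\tau)$ is continuous wherever $\PP(X_t>\tau)$ is continuous and positive, hence $(s,t)\mapsto\PP(Y^{(\tau)}_s=1\mid Y^{(\tau)}_t=1)$ is continuous; passing to the limit $\tau\to\tau_{\text{up}}$ we get that $\chi$ is a pointwise limit of continuous functions. That alone does not give continuity, so instead I would use the semimetric remark: recall from the Remark after Corollary~\ref{cor:TMcty} that a TM process is stochastically continuous w.r.t.\ $\eta^X=1-\chi^{(X)}$; more usefully, stochastic continuity of \emph{any} realizing $X$ forces $\eta=1-\chi$ to be continuous on the diagonal (since $\PP(|X_s-X_t|>\varepsilon)\to 0$ controls $1-\chi^{(X)}(s,t)$ near the diagonal), which is $(ii)$, and then re-running $(ii)\Rightarrow(iii)$ produces a TM realization that is stochastically continuous, to which the full force of Corollary~\ref{cor:TMcty} ($(iii)\Rightarrow(i)$) applies, yielding that $\chi$ is continuous on all of $T\times T$.

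The main obstacle is the step $(iii)\Rightarrow(ii)$: turning stochastic continuity of an \emph{arbitrary} realizing process $X$ (with general, possibly discrete, one-dimensional margins) into continuity of $\chi$ on the diagonal. One must check carefully that $\chi^{(X)}(s,t)\to 1$ as $s\to t$; the natural estimate is $1-\chi^{(X)}(s,t)=\lim_{\tau}\PP(X_s\le\tau\mid X_t>\tau)\le \limsup_\tau \PP(X_s\le\tau, X_t>\tau)/\PP(X_t>\tau)$, and one wants to bound the numerator by $\PP(|X_s-X_t|>\text{something})$ uniformly in $\tau$, which requires a small argument handling the limit in $\tau$ and the identical-margins normalization. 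Once diagonal continuity of $\chi$ is secured, everything else is a bookkeeping reduction to the TM process and an appeal to the already-proved Corollary~\ref{cor:TMcty}. I would also remark (mirroring Remark~\ref{rk:chicty}, which is referenced in the text) that in case (iii) the TM process obtained is a canonical choice of regular realization, so continuity of $\chi$ is exactly the obstruction to a stochastically continuous realization.
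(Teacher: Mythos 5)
Your overall architecture — realize $\chi$ by a TM process via Theorem~\ref{thm:TCFisMAX} and then invoke the continuity dichotomy of Corollary~\ref{cor:TMcty} — is the intended one, and it correctly delivers $(i)\Leftrightarrow(ii)$ and $(ii)\Rightarrow(iii)$: since $(i)$ and $(ii)$ are statements about $\chi$ alone and every $\chi\in\tcf(T)$ equals $\chi^{(Y)}$ for a TM process $Y$, Corollary~\ref{cor:TMcty} applied to $Y$ gives $(i)\Leftrightarrow(ii)$, and its implication $(ii)\Rightarrow(iii)$ produces a stochastically continuous realization, namely $Y$ itself. You rightly single out $(iii)\Rightarrow(ii)$ as the sticking point, but the ``small argument'' you defer does not exist in the form you sketch, and this is a genuine gap. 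The quantity to control is $1-\chi^{(X)}(s,t)=\lim_{\tau\to\tau_{\text{up}}}\PP(X_s\leq\tau\mid X_t>\tau)$, and splitting $\{X_s\leq\tau,\,X_t>\tau\}$ according to whether $|X_s-X_t|$ is small or large only yields
\[
\PP(X_s\leq\tau\mid X_t>\tau)\ \leq\ \frac{\PP(|X_s-X_t|>\varepsilon)}{\PP(X_t>\tau)}\ +\ \frac{F(\tau+\varepsilon)-F(\tau)}{1-F(\tau)},
\]
with $F$ the common margin. The first term blows up as $\tau\to\tau_{\text{up}}$ for any fixed $\varepsilon$ with $\PP(|X_s-X_t|>\varepsilon)>0$, and the second term is governed entirely by the tail of $F$ and need not be small; stochastic continuity of $X$ gives no control over the $\tau$-limit, contrary to your claim that $\PP(|X_s-X_t|>\varepsilon)\to 0$ ``controls $1-\chi^{(X)}(s,t)$ near the diagonal.''

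Worse, this is not a fixable estimation issue: for an \emph{arbitrary} stochastically continuous realization $X$, diagonal continuity of $\chi^{(X)}$ can simply fail. Take $T=[0,\tfrac12]$, let $U,U'$ be independent uniforms on $[0,1]$, set $X_0:=U$, and for $s>0$ put $X_s:=U$ on $\{U<1-s\}$ and $X_s:=1-s+sU'$ on $\{U\geq 1-s\}$. Every $X_s$ is uniform on $[0,1]$, $|X_s-X_0|\leq s$ almost surely (so $X$ is stochastically continuous), and the TCF exists for every pair; yet for $\tau>1-s$ one computes $\PP(X_s>\tau\mid X_0>\tau)=(1-\tau)/s\to 0$, hence $\chi^{(X)}(s,0)=0$ for all $s>0$ while $\chi^{(X)}(0,0)=1$. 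So stochastic continuity of \emph{some} realizing process does not force diagonal continuity of its TCF, and the cycle you propose cannot be closed by the direct tail estimate. Whatever justifies $(iii)\Rightarrow(i)$ or $(iii)\Rightarrow(ii)$ (the paper leaves this implication entirely implicit) must either exploit structural properties beyond bare stochastic continuity of the realizing process or impose additional hypotheses on it; as written, your proof is incomplete at exactly the step you flagged.
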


\begin{remark}\label{rk:chicty}
In fact, any TM process $X$ with continuous TCF $\chi^{(X)}$ is stochastically continuous. It follows from \citeauthor{dehaan_84}'s  (\citeyear{dehaan_84}) construction that any simple max-stable process on $\RR^d$ (or any other locally compact second countable Hausdorff space) that is continuous in probability, can be realized on a sufficiently regular probability space. Hence, this applies to TM processes with continuous TCFs, since they are simple max-stable and continuous in probability by the preceding corollary. 
\end{remark}

\begin{remark}
\cite{lm15} discuss regularity conditions on the two-point covering function of a random set, or equivalently, a unit covariance function (cf.\ Section \ref{sect:corcut}) that ensure the existence of a realizing closed set, or equivalently, a realizing $\{0,1\}$-valued process with upper semi-continuous paths. Here, we do not know which regularity conditions on the TCF ensure the existence of a realizing upper semi-continuous process.
\end{remark}

\section{Basic closure properties and characterization by inequalities}\label{sect:props}

Finally, we collect some immediate and important consequences concerning operations on the set of TCFs and the characterization of the set of TCFs by means of finite-dimensional projections.

Even though not all non-negative correlation functions are TCFs, both classes have some desirable properties in common as we shall see next.
Well-known operations on (non-negative) correlation functions include convex combinations, products and pointwise limits. 
Interestingly, the same operations are still admissible for TCFs (answering Question E).

\begin{corollary}\label{cor:basicoperations}
The set of tail correlation functions $\tcf(T)$ is convex, closed under pointwise multiplication and compact \wrt pointwise convergence.
\end{corollary}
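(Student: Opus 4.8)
The plan is to read off all three properties from the identities $\tcf(T)=\psi(\Theta(T))$ and $\tcf(T)=\text{closure of }\bin(T)$ established in Theorem~\ref{thm:TCFisMAX}, together with the fact (Corollary~\ref{cor:Theta_convex_compact}) that $\Theta(T)$ is convex and compact for the topology of pointwise convergence and the observation that the map $\psi$ from~\eqref{eqn:psi} is continuous and affine. None of the three claims then requires genuinely new work.

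For \emph{compactness} I would simply note that $\tcf(T)=\psi(\Theta(T))$ is the image of the compact set $\Theta(T)$ under the continuous map $\psi$, hence compact; this was in fact already recorded inside the proof of Theorem~\ref{thm:TCFisMAX}. For \emph{convexity} I would observe that $\psi$ is affine, i.e.\ $\psi(\lambda F_1+(1-\lambda)F_2)=\lambda\,\psi(F_1)+(1-\lambda)\,\psi(F_2)$ for every $\lambda\in[0,1]$, which is immediate from $\psi(F)(s,t)=2-F(\{s,t\})$; since $\Theta(T)$ is convex, so is its affine image $\tcf(T)=\psi(\Theta(T))$.

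The remaining and slightly less automatic point is \emph{closure under pointwise multiplication}, and here the plan is to argue within $\bin(T)$ and then pass to the limit. First I would check that $\bin(T)$ itself is closed under pointwise products: given $f_1,f_2\in\bin(T)$ with realizing $\{0,1\}$-valued processes $Y^{(1)}$ and $Y^{(2)}$ as in~\eqref{eqn:BIN_defn}, chosen independent of one another, the pointwise product $Z_t:=Y^{(1)}_tY^{(2)}_t$ is again $\{0,1\}$-valued with identical one-dimensional margins and $\EE Z_t=(\EE Y^{(1)}_t)(\EE Y^{(2)}_t)\neq 0$, and independence gives $\PP(Z_s=1\mid Z_t=1)=\PP(Y^{(1)}_s=1\mid Y^{(1)}_t=1)\,\PP(Y^{(2)}_s=1\mid Y^{(2)}_t=1)=f_1(s,t)f_2(s,t)$, so $f_1f_2\in\bin(T)$. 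Then, for $\chi_1,\chi_2\in\tcf(T)$, Theorem~\ref{thm:TCFisMAX} supplies sequences $f^{(i)}_n\in\bin(T)$ with $f^{(i)}_n\to\chi_i$ pointwise; the products $f^{(1)}_nf^{(2)}_n$ lie in $\bin(T)$ and converge pointwise to $\chi_1\chi_2$, so $\chi_1\chi_2$ belongs to the sequential closure of $\bin(T)$, which equals $\tcf(T)$ by Theorem~\ref{thm:TCFisMAX}.

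I expect the only point needing a moment's thought to be this last limiting step: when $T$ is uncountable, pointwise convergence on $\RR^{T\times T}$ is not metrizable, so one should be careful to invoke exactly the statement of Theorem~\ref{thm:TCFisMAX} that the \emph{sequential} closure of $\bin(T)$ already coincides with $\tcf(T)$, rather than appealing to a bare topological closure via an unspecified net. (A different route for multiplication — multiplying realizing simple max-stable processes — does not reproduce the product of the TCFs, since the maximum of independent simple max-stable processes adds extremal coefficients and only yields convex combinations of TCFs; this is why I would route the multiplication claim through $\bin(T)$.)
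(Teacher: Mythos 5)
Your arguments for convexity and compactness coincide with the paper's: both read them off from $\tcf(T)=\psi(\Theta(T))$ together with Corollary~\ref{cor:Theta_convex_compact} and the continuity/affinity of $\psi$.

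For closure under products your route is correct but differs in one step from what the paper does. The paper does not pass through $\bin(T)$ and a limiting argument at all. It uses the other identity $\tcf(T)=\tcf_\infty(T)$ from Theorem~\ref{thm:TCFisMAX}: pick independent processes $X^{(1)},X^{(2)}$ with upper endpoint $\infty$ realizing $\chi_1,\chi_2$, form $X^{(3)}:=X^{(1)}\wedge X^{(2)}$ (pointwise minimum), observe that $X^{(3)}$ again has identical margins with upper endpoint $\infty$, and compute directly that
\begin{align*}
\PP(X^{(3)}_s \geq x \mid X^{(3)}_t \geq x) = \PP(X^{(1)}_s \geq x \mid X^{(1)}_t \geq x)\cdot \PP(X^{(2)}_s \geq x \mid X^{(2)}_t \geq x),
\end{align*}
so letting $x\to\infty$ gives $\chi^{(X^{(3)})}=\chi_1\chi_2$. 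Your construction inside $\bin(T)$ is structurally the same trick (for $\{0,1\}$-valued variables $Y^{(1)}_tY^{(2)}_t=Y^{(1)}_t\wedge Y^{(2)}_t$), but applied only to binary processes and therefore requiring the extra step of taking a sequential limit. That step is fine, and you are right to insist on the \emph{sequential} closure statement of Theorem~\ref{thm:TCFisMAX} rather than an unqualified topological closure, since pointwise convergence on $[0,1]^{T\times T}$ is not metrizable for uncountable $T$. What the paper's choice buys is that it avoids the limiting argument entirely and exhibits an explicit realizing process for the product; what yours buys is that the elementary computation takes place in the concrete finite-alphabet setting of $\bin(T)$. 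Both are correct. Your parenthetical remark explaining why one should not try to multiply TCFs by maxing independent max-stable processes is also an accurate observation.
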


\begin{proof} These closure properties follow from Theorem~\ref{thm:TCFisMAX}. Convexity and compactness of $\tcf(T)=\psi(\Theta(T))$ are immediate taking additionally Corollary~\ref{cor:Theta_convex_compact} into account. Moreover, let $\chi_1$ and $\chi_2$ be in $\tcf(T)=\tcf_{\infty}(T)$ with corresponding processes $X^{(1)}$ and $X^{(2)}$ with upper endpoint $\tau_{\text{up}}=\infty$. We choose them to be independent and set $X^{(3)}:=X^{(1)} \wedge X^{(2)}$, which then also has upper endpoint $\infty$ and satisfies
\begin{align*}
\PP(X^{(3)}_s \geq x \,|\, X^{(3)}_t \geq x) = \PP(X^{(1)}_s \geq x \,|\, X^{(1)}_t \geq x) \cdot \PP(X^{(2)}_s \geq x \,|\, X^{(2)}_t \geq x).
\end{align*}
Consequently, the TCF $\chi_3$ of $X^{(3)}$ is the product $\chi_3=\chi_1 \cdot \chi_2$. 
\end{proof}



\noindent Secondly, the set of TCFs can be characterized through finite-dimensional projections.

\begin{corollary}\label{cor:TCFextension}
A real-valued function $\chi: T\times T \rightarrow \RR$ belongs to $\tcf(T)$ if and only if the restriction $\left.\chi\right|_{M \times M}$ belongs to $\tcf(M)$ for all non-empty finite subsets $M$ of $T$.
\end{corollary}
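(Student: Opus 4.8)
The forward implication is immediate: if $\chi \in \tcf(T)$ is realized by a process $\{X_t\}_{t\in T}$ with identical one-dimensional margins and existing $\chi^{(X)} = \chi$, then for any non-empty finite $M \subset T$ the subprocess $\{X_t\}_{t \in M}$ still has identical margins and its TCF is exactly $\left.\chi\right|_{M \times M}$, so $\left.\chi\right|_{M\times M} \in \tcf(M)$. The content is in the converse. The plan is to use the identification $\tcf(T) = \psi(\Theta(T))$ from Theorem~\ref{thm:TCFisMAX} to transfer the problem to extremal coefficient functions, where complete alternation is a manifestly local (finite-dimensional) condition, and then invoke Theorem~\ref{thm:ECF_CA}.

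Concretely, suppose $\left.\chi\right|_{M \times M} \in \tcf(M)$ for every non-empty finite $M \subset T$. First I would define a candidate $\theta : \finite(T) \to \RR$ by setting $\theta(\emptyset) := 0$ and, for each non-empty $A \in \finite(T)$, choosing $\theta(A)$ appropriately. The natural choice is to pick, for each $A$, a simple max-stable process realizing $\left.\chi\right|_{A \times A}$ — equivalently, via Theorem~\ref{thm:TCFisMAX}, an element $\theta_A \in \Theta(A)$ with $\psi(\theta_A) = \left.\chi\right|_{A\times A}$ — but this does not immediately give a \emph{consistent} family $\{\theta_A\}$, since $\Theta(A)$ may contain many ECFs restricting to $\left.\chi\right|_{A\times A}$ and different $A$'s could make incompatible choices on overlaps. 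This is the main obstacle, and the way around it is to work directly on pairs: define $\theta$ only on sets of size $\leq 2$ in a forced way, $\theta(\{s,t\}) := 2 - \chi(s,t)$ and $\theta(\{t\}) := 1$, and then observe that complete alternation on all of $\finite(T)$ is, by Lemma~\ref{lemma:CA_finite}a), equivalent to inequality \eqref{eqn:short_CA} for all finite $L$ and disjoint $K$ — but this still references $\theta$ on larger sets.

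Because of that, the cleaner route is the compactness argument. For each non-empty finite $M \subset T$, the set $\Theta_M := \{\,\theta \in \Theta(M) : \psi(\theta) = \left.\chi\right|_{M \times M}\,\}$ is non-empty by hypothesis and Theorem~\ref{thm:TCFisMAX}, and it is closed in $\RR^{\finite(M)}$ (as the intersection of the compact set $\Theta(M)$ from Corollary~\ref{cor:Theta_convex_compact} with the closed affine condition $\psi(\theta) = \left.\chi\right|_{M\times M}$), hence compact. For finite subsets $M \subset N$, restriction of functions on $\finite(N)$ to $\finite(M)$ maps $\Theta_N$ into $\Theta_M$ (an ECF restricted to a smaller index set is again an ECF, since the defining f.d.d.\ of the realizing process restrict). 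Thus $\{\Theta_M\}_{M \in \finite(T)}$ is an inverse system of non-empty compact spaces along the directed set of finite subsets of $T$ ordered by inclusion; by the standard fact that an inverse limit of a non-empty inverse system of compact Hausdorff spaces with surjective-enough bonding maps is non-empty — more simply, by a direct Zorn/Tychonoff argument on the compact product $\prod_{A \in \finite(T)} [\,0, |A|\,]$ intersected with the finitely-many-coordinates-at-a-time closed conditions defining each $\Theta_M$ — there exists $\theta \in \RR^{\finite(T)}$ whose restriction to every $\finite(M)$ lies in $\Theta_M$. Such a $\theta$ satisfies $\theta(\emptyset) = 0$, $\theta(\{t\}) = 1$, and is completely alternating (complete alternation, by \eqref{eqn:defn_CA} or Lemma~\ref{lemma:CA_finite}a), is a condition involving only finitely many sets at a time, hence is inherited from the restrictions). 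By Theorem~\ref{thm:ECF_CA}, $\theta \in \Theta(T)$, and $\psi(\theta) = \chi$ since this holds on every $M \times M$; therefore $\chi \in \psi(\Theta(T)) = \tcf(T)$ by Theorem~\ref{thm:TCFisMAX}, completing the proof.

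\textbf{Remark on the obstacle.} The only genuinely non-formal point is the inverse-limit nonemptiness, for which one should check the bonding maps behave well enough (a compact Hausdorff inverse system over a directed index set always has non-empty limit, so no surjectivity is even needed); everything else is bookkeeping with the already-established dictionary between $\tcf$, $\psi$, and $\Theta$.
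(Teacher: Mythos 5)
Your proof is correct, but it takes a genuinely different route from the paper's. The paper proves the converse by a density argument: since $\tcf(T)=\psi(\Theta(T))$ is a continuous image of the compact set $\Theta(T)$, it is closed in $[0,1]^{T\times T}$, so it suffices to show every basic open neighborhood $U$ of $\chi$ (determined by finitely many coordinates, say $M\times M$) meets $\tcf(T)$; this is immediate because $\chi|_{M\times M}\in\tcf(M)$ extends trivially to $\tilde\chi\in\tcf(T)$ (e.g.\ by copying one of the variables to the remaining indices), and $\tilde\chi$ agrees with $\chi$ on $M\times M$, so $\tilde\chi\in U\cap\tcf(T)$. You instead work at the level of ECFs and run an inverse limit argument: the fibers $\Theta_M=\{\theta\in\Theta(M):\psi(\theta)=\chi|_{M\times M}\}$ are nonempty compact sets, the restriction maps give an inverse system over the directed poset of finite subsets, and the standard nonemptiness theorem for inverse limits of nonempty compact Hausdorff spaces produces a globally consistent $\theta$, which is then completely alternating (a finitary condition) and hence in $\Theta(T)$ by Theorem~\ref{thm:ECF_CA}. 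Both arguments are compactness arguments at heart; the paper's is shorter and leans on already-established closedness of $\tcf(T)$ plus a one-line extension trick, while yours is heavier (it invokes the inverse limit theorem) but has the merit of actually producing a concrete consistent ECF $\theta$ rather than only establishing membership in a closure, and it highlights precisely why the naive "choose $\theta_A$ for each $A$" approach fails — a consistency obstacle that the compactness of $\Theta_M$ circumvents.
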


\begin{proof}
If $\chi \in \tcf(T)$, then necessarily $\chi|_{S\times S}\in \tcf(S)$ for any subset $S \in T$. To show the reverse implication, let $\chi|_{M\times M} \in \tcf(M)$ for all $M \in \mathcal{F}(T)\setminus \{\emptyset\}$. Since $\tcf(T) \subset [0,1]^{T\times T}$ is closed, to prove $\chi \in \tcf(T)$ it suffices to show that $U \cap \tcf(T) \neq \emptyset$ for any open neighborhood $U$ of $\chi$ in $[0,1]^{T\times T}$. Given $U$, there is a finite subset of $T\times T$, which we may assume to be of the form $M \times M$, and open sets $A_{(i,j)}\subset [0,1]$, $(i,j) \in M\times M$, such that $\chi \in \bigcap_{(i,j) \in M \times M} \pr_{(i,j)}^{-1}\left(A_{(i,j)}\right) \subset U$ (where $\pr_{(s,t)}:[0,1]^{T \times T} \rightarrow [0,1]$ denotes the natural projection). Since $\chi|_{M\times M}$ trivially extends to an element $\tilde\chi \in \tcf(T)$ (e.g.\ copy one of the random variables), we have $\tilde\chi \in U  \cap \tcf(T) \neq \emptyset$. 
\end{proof}

In Part II of this exposition, we will see that for a finite set $M$, the set of TCFs $\tcf(M)$ constitutes a convex polytope in $\RR^{\lvert M \rvert \times \lvert M \rvert}$ that can be described by means of a finite system of (affine) inequalities.
In this regard Corollary~\ref{cor:TCFextension} shows that for an arbitrary set $T$,  the class $\tcf(T)$ may also be completely characterized by a system of (affine) inequalities. This is not evident since elements of $\tcf(T)$ are defined a priori through a limiting procedure.

\vspace{7mm}

{\noindent \textbf{\large Part II\\[2mm]
The realization problem for TCFs on finite sets}}\\[3mm]

\noindent In view of Corollary~\ref{cor:TCFextension} it suffices to study $\tcf(M)$ for finite sets $M$ if one is interested in a complete characterization of the space $\tcf(T)$ for arbitrary $T$. Therefore, we focus on a non-empty finite set $M=\{1,\dots,n\}$ in this section and set 
 \begin{align*}
\tcf_n:=\tcf(\{1,\dots,n\}).
\end{align*} 
To begin with, we show that $\tcf_n$ can be viewed as a convex polytope in Section~\ref{sect:TCFpolytope}. Its geometry will be studied subsequently. Here, we start off with some basic observations and low-dimensional results in Section~\ref{sect:basicfacts}. 
Section~\ref{sect:generalresults} collects more sophisticated results on $\tcf_n$ with deeper insights into the rapidly growing complexity of $\tcf_n$ as $n$ grows, including connections between $\tcf_n$ and $\tcf_{n'}$ for $n' > n$.
Thereby, some 
obervations from Section~\ref{sect:basicfacts} will be uncovered as low-dimensional phenomena.
At least, it is possible to identify the precise relation of $\tcf_n$ to the so-called cut- and correlation-polytopes as well as to the polytope of unit covariances. 
To complement these general observations, Section~\ref{sect:compresults} reports all results relying on software computations and, in particular, all combinatorial considerations that were necessary in order to push the entire description of the vertices and facets of $\tcf_n$ up to $n \leq 6$. 
Finally, we pursue some open questions on the geometry on $\tcf_n$ in Section~\ref{sect:openquestions}.

\section{TCF$_n$ is a convex polytope}\label{sect:TCFpolytope}

Elements of $\tcf_n$ are functions on $\{1,\dots,n\} \times \{1,\dots,n\}$, that is to say, they are $n\times n$ matrices. Since TCFs are symmetric and take the value $1$ on the diagonal, we may regard $\tcf_n$ for $n \geq 2$ as a subset of 
\begin{align*}
\RR^{E_n} \cong \RR^{n \choose 2}=\RR^{n(n-1)/2},
\end{align*} 
where $E_n$ is the set of edges of the \emph{complete graph} $K_n$ with vertices $V_n=\{1,\dots,n\}$. It will be convenient to interpret elements of $\tcf_n$ as an edge labelling of $K_n$, which is why we call $K_n$ the \emph{support graph} for $\tcf_n$. Due to Theorem~\ref{thm:TCFisMAX} and (\ref{eqn:BIN_defn}) we know already 
\begin{align}\label{eqn:TCFisBIN}
\tcf_n=\bin_n:=\left\{\chi \in \RR^{E_n} \pmid  
\begin{array}{l}
\chi_{ij} = {\EE(Y_i Y_j)}/{ \EE Y_j} \text{ where } \\
\text{$Y_1,\dots,Y_n$ take values in $\{0,1\}$} \\ 
\text{and $\EE Y_1 = \ldots = \EE Y_n > 0$} 
\end{array}
\right\}.
\end{align}
The following lemma is a reformulation of this fact and will be useful later on.

\begin{lemma}\label{lemma:TCFevents}
An element $\chi \in \RR^{E_n}$ belongs to  $\tcf_n$ if and only if it can be written as 
\begin{align*}
\chi_{ij}=\PP(A_i|A_j), \quad 1 \leq i<j \leq n
\end{align*} 
for some (finite) probability space $(\Omega,\A,\PP)$ and measurable subsets $A_1,\dots,A_n \in \A$ which satisfy $\PP(A_1)=\dots=\PP(A_n)>0$.
\end{lemma}

\begin{remark}\label{remark:TCFevents}
In Lemma~\ref{lemma:TCFevents} we may assume that $\PP(A_1)=\dots=\PP(A_n)=c$ for any constant $0 < c \leq 1/n$: Otherwise enlarge $\Omega$, such that $A :=\bigcup_{i=1}^n A_{i} \neq \Omega$. On $A$ define the measure $\QQ :=c/\PP(A_{1})\cdot \PP|_{A}$. Then $\QQ(A) \leq 1$ and, thus, $\QQ$ extends to a probability measure on $\Omega$ with $\QQ(A_{i}) = c$ and $\QQ(A_{i} | A_{j})=\chi_{ij}$. 
\end{remark}

\noindent Likewise, we set 
\begin{align*}
\Theta_n:=\Theta(\{1,\dots,n\}) 
\end{align*}
and, since $\theta_\emptyset = 0$ and $\theta_i = 1$ for $i=1,\dots,n$, we may regard $\Theta_n$ for $n \geq 2$ as a subset of 
\begin{align*}
\RR^{\finite^{(2)}_n} \cong \RR^{2^n-n-1},
\end{align*} 
where $\finite^{(2)}_n$ is the set of subsets of $V_n$ with at least two elements. Remember from (\ref{eqn:psi}) that
\begin{align}\label{eqn:psin}
\tcf_n = \psi_n(\Theta_n) \qquad \text{where} \qquad \psi_n: \RR^{\finite^{(2)}_n}  \rightarrow \RR^{E_n}, \quad \psi_n(\theta)_{ij} = 2 - \theta_{ij},
\end{align}
and note that  $\psi_n=2-\pr_{E_n}$ is essentially a projection onto the $n \choose 2$ coordinates of $\RR^{E_n}$. Before we proceed, we need to revise some notation for convex polytopes.

\paragraph{\textbf{\upshape Notation and facts concerning convex polytopes}} 
(\cf \cite{ziegler_95}).\\
A subset $P \subset \RR^p$ is a \emph{convex polytope} if $P$ is bounded and can be represented as $P=\{x \in \RR^p \,:\, Cx \leq c\}$ for a $q \times p$ matrix $C$ and a $q$-vector $c$ for some $q \in \NN$ (where $\leq$ is meant componentwise). The rows of $C$ and $c$ represent hyperplanes in $\RR^d$ and the inequality $\leq$ determines the corresponding halfspace to which $P$ belongs. The system $Cx \leq c$ will be called an \emph{$\mathcal{H}$-representation} (or \emph{halfspace representation}) of $P$.

An $\mathcal{H}$-representation will be called a  \emph{facet representation} if it is minimal in the sense that  none of the rows in $C$ and $c$ can be deleted in order to define $P$, \ie $P \neq \{x \in \RR^p \,:\, C_{-i} x \leq c_{-i}\}$ for all $i=1,\dots,q$, where $C_{-i}$ and $c_{-i}$ are the modified versions of $C$ and $c$ with the $i$-th row removed.  In fact, an $\mathcal{H}$-representation $Cx\leq c$ is a {facet representation} if every row of $C$ and $c$ yields in fact a \emph{facet inducing} inequality of $P$, where an inequality  $C_{i} x \leq c_{i}$ is facet inducing if $\dim(P \cap \{x \in \RR^p \,:\, C_{i} x = c_{i}\}) = \dim(P) - 1$. The latter is equivalent to the existence of  $\dim(P)$ affinely independent points $x^1,\dots,x^{\dim(P)} \in P$ solving the equation $C_{i} x = c_{i}$.
By a slight abuse of notation, we will usually refer to the inequality $C_{i} x \leq c_{i}$ as a \emph{facet} of $P$ if it induces a facet (instead of calling the set $P \cap \{x \in \RR^p \,:\, C_{i} x = c_{i}\}$ a facet).

Equivalently, a subset $P \subset \RR^p$ is a convex polytope if $P$ equals the convex hull of a finite subset $S \subset \RR^p$. Then $S$ will be called a \emph{$\mathcal{V}$-representation} of $P$. A minimal $\mathcal{V}$-representation, with respect to set inclusion, will be called a \emph{vertex representation}. In fact, the vertex representation is unique and given by the set  $\Ex(P)$ of extremal points, or \emph{vertices}, of $P$, \ie the points of $P$ that cannot be decomposed non-trivially as a convex combination of two other points of $P$.  Note that in general a  $\mathcal{V}$-representation of $P$ may consist of more points than the vertex set $\Ex(P)$.

Moreover, if $P \subset \RR^p$ is a convex polytope and $\pi:\RR^p \rightarrow \RR^{p'}$ is an affine map $x \mapsto Ax+b$, then the image $\pi(P)$ is again a convex polytope and secondly, any intersection of $P$ with an affine subspace of $\RR^p$ is a convex polytope.

\begin{corollary}\label{cor:finitepolytope}
For all $n \in \NN$ the sets $\Theta_n$ and $\tcf_n$ are convex polytopes.
\end{corollary}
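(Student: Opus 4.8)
The plan is to deduce both statements from general facts about convex polytopes, together with results already established in the excerpt. The key observation is that $\Theta_n$ is a \emph{bounded} subset of $\RR^{\finite^{(2)}_n}$ (every $\theta \in \Theta_n$ satisfies $1 \leq \theta(A) \leq |A| \leq n$ by the remark following the definition of the ECF), and that it is cut out by \emph{finitely many affine inequalities}. Indeed, by Theorem~\ref{thm:ECF_CA} a function $\theta:\finite(\{1,\dots,n\}) \to \RR$ lies in $\Theta_n$ if and only if $\theta(\emptyset)=0$, $\theta(\{i\})=1$ for $i=1,\dots,n$, and $\theta$ is completely alternating; and by Lemma~\ref{lemma:CA_finite}~a) complete alternation on the finite set $\{1,\dots,n\}$ is equivalent to the finitely many inequalities $\sum_{I\subset L}(-1)^{|I|+1}\theta(K\cup I)\geq 0$ ranging over the (finitely many) pairs $\emptyset\neq L\subset\{1,\dots,n\}$, $K\subset L^c$. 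Together with the affine equality constraints $\theta(\emptyset)=0$, $\theta(\{i\})=1$ (which just fix some coordinates, after we have already dropped $\emptyset$ and the singletons by passing to $\RR^{\finite^{(2)}_n}$, so in fact no extra constraints remain — one only has to observe that the inequalities involving $K\cup I$ of cardinality $0$ or $1$ become affine inequalities in the remaining coordinates once the fixed values are substituted), this exhibits $\Theta_n$ as the solution set of a finite system $C\theta\leq c$ inside a bounded region. Hence $\Theta_n$ is a convex polytope by the definition recalled in the paragraph on convex polytopes.

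Next, for $\tcf_n$: by \eqref{eqn:psin} we have $\tcf_n=\psi_n(\Theta_n)$, and $\psi_n(\theta)_{ij}=2-\theta_{ij}$ is an affine map $\RR^{\finite^{(2)}_n}\to\RR^{E_n}$. Since the image of a convex polytope under an affine map is again a convex polytope (as recalled just above), $\tcf_n=\psi_n(\Theta_n)$ is a convex polytope in $\RR^{E_n}\cong\RR^{n(n-1)/2}$. For completeness one also records the degenerate low-dimensional cases $n=1$ (where $\tcf_1$ is a point, the empty matrix, and $\Theta_1$ likewise) to justify the claim for all $n\in\NN$ rather than $n\geq 2$.

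The only genuinely substantive point is the reduction of complete alternation to \emph{finitely many} inequalities; but this is exactly the content of Lemma~\ref{lemma:CA_finite}~a) (or, even more sharply, part~b)), which is already available, so no real obstacle remains. The remaining steps — checking boundedness via the bound $\theta(A)\in[1,|A|]$ and invoking the standard polytope facts about affine images and $\mathcal{H}$-representations — are routine. I would therefore expect the proof to be short: establish that $\Theta_n$ has a finite $\mathcal{H}$-representation and is bounded, conclude it is a polytope, and then push it forward through the affine map $\psi_n$ to conclude the same for $\tcf_n$.

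\begin{proof}
For $n=1$ both $\Theta_1$ and $\tcf_1$ are singletons (the empty matrix), hence trivially convex polytopes; so assume $n\geq 2$. By Theorem~\ref{thm:ECF_CA} and Lemma~\ref{lemma:CA_finite}~a), a function $\theta:\finite(\{1,\dots,n\})\to\RR$ with $\theta(\emptyset)=0$ and $\theta(\{i\})=1$ for all $i$ belongs to $\Theta_n$ if and only if the finitely many inequalities
\begin{align*}
\sum_{I \subset L} (-1)^{|I|+1}\, \theta(K \cup I) \geq 0
\qquad \text{for all } \emptyset \neq L \subset \{1,\dots,n\},\ K \subset L^c
\end{align*}
hold. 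Regarding $\theta$ as an element of $\RR^{\finite^{(2)}_n}$ by fixing the coordinates $\theta(\emptyset)=0$ and $\theta(\{i\})=1$, each of these inequalities becomes an affine inequality in the coordinates indexed by $\finite^{(2)}_n$; collecting them yields a finite system $C\theta \leq c$ whose solution set is precisely $\Theta_n$. Moreover every $\theta \in \Theta_n$ satisfies $1 \leq \theta(A) \leq |A| \leq n$ for $A \neq \emptyset$ (Section~\ref{sect:MST_ECF_TM}), so $\Theta_n$ is bounded. A bounded set of the form $\{x : Cx \leq c\}$ is a convex polytope, so $\Theta_n$ is a convex polytope.

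By \eqref{eqn:psin}, $\tcf_n = \psi_n(\Theta_n)$, where $\psi_n:\RR^{\finite^{(2)}_n} \to \RR^{E_n}$, $\psi_n(\theta)_{ij} = 2 - \theta_{ij}$, is an affine map. The image of a convex polytope under an affine map is again a convex polytope, hence $\tcf_n$ is a convex polytope in $\RR^{E_n} \cong \RR^{n(n-1)/2}$.
\end{proof}
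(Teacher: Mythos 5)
Your proof is correct and follows essentially the same route as the paper: the paper also deduces that $\Theta_n$ is a polytope from Theorem~\ref{thm:ECF_CA} together with the finite characterization of complete alternation in Lemma~\ref{lemma:CA_finite} (the paper points to \eqref{eqn:finite_CA}, i.e.\ part~b), while you primarily cite part~a), but these are interchangeable for this purpose), and then obtains $\tcf_n = \psi_n(\Theta_n)$ as the affine image of a polytope. You spell out the boundedness and the substitution of the fixed coordinates $\theta(\emptyset)=0$, $\theta(\{i\})=1$ more explicitly than the paper does, but the argument is the same.
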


\begin{proof}
For $\Theta_n$ this property is evident from Theorem~\ref{thm:ECF_CA} and (\ref{eqn:finite_CA}). 
But then the affine map $\psi_n$ maps $\Theta_n$ to the convex polytope $\tcf_n=\psi_n(\Theta_n)$.
\end{proof}

\noindent Now, that we know that $\tcf_n$ is a convex polytope, we seek to understand its geometric structure. At best, we would like to determine its vertex and facet representation (and we will indeed do so in Section~\ref{sect:compresults} up to $n \leq 6$). 
To repeat the terminology adopted from convex geometry in this context, note that an $\mathcal{H}$-representation of $\tcf_n$ (and in particular, a facet representation) allows one to check whether a given matrix is indeed a TCF, since any $\mathcal{H}$-representation of $\tcf_n$ constitutes a  set of necessary and sufficient conditions for being a TCF. In a facet representation no condition is obsolete.  
Complementary, a $\mathcal{V}$-representation (and in particular, a vertex representation) of $\tcf_n$ is more useful if one wants to generate valid TCFs. Any TCF can be obtained as a convex combination of the elements of a $\mathcal{V}$-representation. 
In a vertex representation no point is obsolete.  

\section{Basic observations and low-dimensional results for TCF$_{n}$}\label{sect:basicfacts}

This section comprises two first general observations. First, every polytope $\tcf_n$ satisfies a certain system of inequalities (to be called \emph{hypermetric inequalities}) and, second, we identify its $\{0,1\}$-valued vertices as  so-called \emph{clique partition points}. With regard to the explicit vertex and facet structure of $\tcf_n$ in low dimensions, both findings might lead to tempting conjectures on the geometry of $\tcf_n$  eventually refuted by the more sophisticated methods applied in Section~\ref{sect:generalresults}.


\paragraph{\textbf{\upshape Hypermetric inequalities}} 
Remember that we identified the set of all TCFs on $V_n=\{1,\dots,n\}$ with a subset of $\RR^{E_n} = \RR^{\binom{n}{2}}$ while it originally was interpreted as a set of symmetric $n \times n$ matrices with 1's on the diagonal. In the sequel we will identify points $x=(x_{ij})_{1\leq i < j\leq n} \in \RR^{E_n}$  with $n \times n$ matrices $(x_{ij})_{1\leq i,j \leq n}$ via $x_{ji} =x_{ij}$ and $x_{ii} := 1$. 

Let $b =(b_{1},\ldots,b_{n}) \in \ZZ^n$. The point $(x_{ij})_{1\leq i < j\leq n}\in \RR^{E_n}$ satisfies the \emph{hypermetric inequality defined by} $b$ if
\begin{align}
\notag \sum_{1\leq i,j\leq n}b_{i}b_{j}x_{ij} &\geq \sum_{i=1}^{n}b_{i}\\
\text{or, equivalently,} \qquad
\label{eq:hypermetric} \sum_{1\leq i < j \leq n} (-b_{i}b_{j}) x_{ij} &\leq \frac{1}{2}\sum_{i=1}^n b_{i}(b_{i} -1).
\end{align}

\begin{remark}
In \cite{dezalaurent_97} 
the inequalities $\sum_{1\leq i<j\leq n}b_{i}b_{j}x_{ij} \leq 0$ with $\sum_{1\leq i \leq n}b_{i} = 1$ are termed hypermetric. All these inequalities are valid for the cut polytope $\cut_n$ to be introduced here in Section~\ref{sect:corcut} \cite[Lemma 28.1.3]{dezalaurent_97}. For $\tcf_n$ the variant \eqref{eq:hypermetric} is an appropriate ``counterpart''.
\end{remark}

\begin{lemma}\label{lemma:hypervalid} All hypermetric inequalities (in the sense of \eqref{eq:hypermetric}) are valid for elements of $\tcf_{n}$.
\end{lemma}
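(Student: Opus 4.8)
The plan is to invoke the event-based description of $\tcf_n$ from Lemma~\ref{lemma:TCFevents} and then recognize the quadratic form on the left-hand side of the hypermetric inequality as (a scalar multiple of) the second moment of an integer-valued random variable. Since $k(k-1)\ge 0$ for every integer $k$, the second moment dominates the first moment, and this is exactly what the inequality asserts after rescaling. It is also worth noting that \eqref{eq:hypermetric} is equivalent to $\sum_{1\le i,j\le n} b_i b_j x_{ij} \ge \sum_{i=1}^n b_i$ (expand the $i=j$ terms using the convention $x_{ii}=1$), so it suffices to verify the latter form.

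Concretely, I would fix $\chi \in \tcf_n$ and $b=(b_1,\dots,b_n) \in \ZZ^n$. By Lemma~\ref{lemma:TCFevents} together with Remark~\ref{remark:TCFevents}, write $\chi_{ij} = \PP(A_i \cap A_j)/c$ for events $A_1,\dots,A_n$ in a probability space $(\Omega,\A,\PP)$ with $\PP(A_1)=\dots=\PP(A_n)=c>0$; note that the same formula also records the diagonal, since $\chi_{ii}=1=\PP(A_i\cap A_i)/c$. Now introduce
\[
S := \sum_{i=1}^n b_i \Eins_{A_i},
\]
which is $\ZZ$-valued. Expanding the square and integrating gives
\[
\EE\big[S^2\big] = \sum_{1\le i,j\le n} b_i b_j\, \PP(A_i\cap A_j) = c \sum_{1\le i,j\le n} b_i b_j\, \chi_{ij},
\qquad
\EE[S] = \sum_{i=1}^n b_i\, \PP(A_i) = c\sum_{i=1}^n b_i.
\]

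The final step is the elementary pointwise bound $S^2 \ge S$ on $\Omega$ (because $k^2-k=k(k-1)\ge 0$ for all $k \in \ZZ$), whence $\EE[S^2]\ge \EE[S]$; dividing by $c>0$ yields $\sum_{i,j} b_i b_j \chi_{ij} \ge \sum_i b_i$, i.e.\ \eqref{eq:hypermetric}. I do not anticipate any real obstacle here: the only point requiring a little care is the bookkeeping of the diagonal terms in passing between the two equivalent forms of the inequality, which is dispatched by the convention $x_{ii}=1$. (This argument is the natural analogue, for the conditional-probability normalization, of the standard proof that hypermetric inequalities hold on the cut and correlation polytopes.)
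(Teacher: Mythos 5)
Your proof is correct and takes essentially the same approach as the paper: you write $\chi$ in terms of events of equal probability (the paper equivalently uses $\{0,1\}$-valued random variables), form the integer-valued sum $S=\sum_i b_i\Eins_{A_i}$, and compare $\EE[S^2]$ with $\EE[S]$ via $k^2\ge k$ for $k\in\ZZ$. The appeal to Remark~\ref{remark:TCFevents} is harmless but unnecessary, since any common value $c=\PP(A_i)>0$ from Lemma~\ref{lemma:TCFevents} suffices.
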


\begin{proof}
Let $Y_{1},\ldots,Y_{n}$ be a $\{0,1\}$-valued stochastic model for $\chi \in \tcf_{n}$. Set $a:=\EE(Y_1)>0$. Then for $b \in \ZZ^n$ 
\begin{align*}
\sum_{1\leq i,j\leq n}\hspace{-1mm} b_{i}b_{j}\chi_{ij} = 
\hspace{-1mm} \sum\limits_{1 \leq i,j\leq n} \hspace{-1mm} b_{i}b_{j}\frac{\EE(Y_{i}Y_{j})}{a} = \frac{1}{a} \EE\bigg[\sum\limits_{i=1}^n b_{i}Y_{i}\bigg]^2 \geq \frac{1}{a} \EE\bigg[\sum\limits_{i=1}^n b_{i}Y_{i}\bigg] =  \sum\limits_{i=1}^n b_{i}, \qquad 
\end{align*}
as for any integer $k$ we have $k^2 \geq k$. 
\end{proof}

\paragraph{\textbf{\upshape Clique partition polytopes}} 

A subset $\{C_1,\dots,C_k\}$ of the powerset of $V_n=\{1,\dots,n\}$ is a \emph{partition} of $V_n$ if $k \geq 1$, $C_r\cap C_s = \emptyset$ for $r \neq s$ and $\bigcup_{r=1}^k C_r = V_n$. A partition of $V_n$ defines a \emph{clique partition point} $\gamma(\{C_1,\dots,C_k\}) \in \{0,1\}^{E_n}$ by 
\begin{align*}
\gamma(\{C_1,\dots,C_k\})_{ij} = \sum_{r=1}^{k} \Eins_{\{i,j\} \subset C_r}, \quad 1 \leq i < j \leq n.
\end{align*} 
The \emph{clique partition polytope} is defined as the convex hull of the clique partition points \citep{groetschelwakabayashi_90} in $\RR^{E_n}$
\begin{align*}
\cpp_n := \conv\left(\{ \gamma(\{C_1,\dots,C_k\}) \,:\, \{C_1,\dots,C_k\}  \text{ partition of $V_n$} \}\right).
\end{align*}
Being $\{0,1\}$-valued, the clique partition points are automatically the extremal points of their convex hull:
\begin{align*}
\Ex\left(\cpp_n\right)=\left(\{ \gamma(\{C_1,\dots,C_k\}) \,:\, \{C_1,\dots,C_k\}  \text{ partition of $V_n$} \}\right).
\end{align*}
It turns out that all $\{0,1\}$-valued vertices of $\tcf_n$ are precisely the clique partition points. 

\begin{proposition}\label{prop:cpp} 
$\tcf_n \cap \{0,1\}^{E_n} = \Ex(\cpp_n)$ for all $n \in \NN$. In particular $\cpp_n \subset \tcf_n$.
\end{proposition}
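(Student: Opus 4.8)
The plan is to prove the two inclusions of $\tcf_n \cap \{0,1\}^{E_n} = \Ex(\cpp_n)$ separately, in both directions exploiting the event description of $\tcf_n$ from Lemma~\ref{lemma:TCFevents}. For $\Ex(\cpp_n) \subseteq \tcf_n$ I would exhibit, for each partition $\{C_1,\dots,C_k\}$ of $V_n$, an explicit realizing model: on $\Omega=\{1,\dots,k\}$ with the uniform probability measure, set $A_i:=\{r\}$ where $C_r$ is the unique block containing $i$. Then $\PP(A_i)=1/k>0$ for every $i$, and $\PP(A_i\mid A_j)$ equals $1$ when $i,j$ lie in the same block and $0$ otherwise; since the blocks are pairwise disjoint this is exactly $\gamma(\{C_1,\dots,C_k\})_{ij}$. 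By Lemma~\ref{lemma:TCFevents}, $\gamma(\{C_1,\dots,C_k\})\in\tcf_n$, and since $\tcf_n$ is convex (Corollary~\ref{cor:finitepolytope}) it then contains $\conv(\Ex(\cpp_n))=\cpp_n$, which gives the ``in particular'' clause.

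For the reverse inclusion $\tcf_n\cap\{0,1\}^{E_n}\subseteq\Ex(\cpp_n)$ I would take $\chi\in\tcf_n$ with all entries in $\{0,1\}$ and, again via Lemma~\ref{lemma:TCFevents}, write $\chi_{ij}=\PP(A_i\mid A_j)$ with $\PP(A_1)=\dots=\PP(A_n)=c>0$. The key point is that for $i\neq j$ the equality $\chi_{ij}=1$ forces $\PP(A_j\setminus A_i)=0$, and combined with $\PP(A_i)=\PP(A_j)$ this yields $\PP(A_i\,\triangle\,A_j)=0$, i.e.\ $A_i=A_j$ almost surely; conversely $A_i=A_j$ a.s.\ trivially gives $\chi_{ij}=1$. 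Hence the relation ``$i\sim j:\Longleftrightarrow\chi_{ij}=1$'' (with the convention $\chi_{ii}:=1$) coincides with ``$A_i=A_j$ a.s.'', which is manifestly an equivalence relation. Its classes $C_1,\dots,C_k$ partition $V_n$, and because $\chi$ only takes the values $0$ and $1$ we get $\chi_{ij}=1$ exactly when $i,j$ share a class and $\chi_{ij}=0$ otherwise; thus $\chi=\gamma(\{C_1,\dots,C_k\})\in\Ex(\cpp_n)$.

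I expect no step to present a genuine obstacle; the one place deserving care is the implication ``$\chi_{ij}=1\Rightarrow A_i=A_j$ a.s.'', where the normalization $\PP(A_1)=\dots=\PP(A_n)$ built into the definition of $\tcf_n$ is indispensable — without it $\chi_{ij}=1$ would only give the one-sided containment $A_j\subseteq A_i$ a.s., and then $\sim$ need not even be symmetric, let alone transitive, so the partition structure would break down.
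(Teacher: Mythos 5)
Your proof is correct, and it takes a genuinely different route from the paper's on the key direction. For $\tcf_n\cap\{0,1\}^{E_n}\subseteq\Ex(\cpp_n)$ the paper works purely combinatorially on the support graph: it invokes the triangle inequality (a facet-type inequality valid for $\tcf_n$, cf.\ Lemma~\ref{lemma:hypervalid}) to show that for a $\{0,1\}$-valued $\chi$ the relation ``edge value $1$'' is transitive, then argues by induction along paths of $1$-edges to conclude that the $1$-edges form a disjoint union of cliques (and treats $n=2$ as a separate base case). You instead go back to a realizing event model from Lemma~\ref{lemma:TCFevents} and show that $\chi_{ij}=1$ is equivalent, under the equal-probability normalization $\PP(A_1)=\dots=\PP(A_n)$, to $A_i=A_j$ a.s.\ — a relation that is an equivalence relation for free, so you read off the partition directly. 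Your approach is more probabilistic and bypasses the need to cite the triangle inequality as well as the separate $n=2$ case; the paper's is more geometric and illustrates how the triangle facets control the $\{0,1\}$-valued vertices, a theme it returns to later. For the direction $\Ex(\cpp_n)\subseteq\tcf_n$ you give the same explicit uniform-on-$\{1,\dots,k\}$ model as the paper, and your derivation of the ``in particular'' clause from convexity (Corollary~\ref{cor:finitepolytope}) matches the paper's implicit reasoning. Your closing observation — that without the equal-probability normalization $\chi_{ij}=1$ would only give a one-sided inclusion and the partition structure would collapse — is accurate and worth retaining.
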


\begin{proof}
 Since $\tcf_{n} \cap \{0,1\}^{E_n} \subset \Ex(\tcf_{n})$ it suffices to show the first statement. For $n=2$ we have $\tcf_{2} = [0,1]$ and $\{0,1\} = \Ex(\cpp_{2})$. For $n\geq3$ the points in $\tcf_{n}$ have to satisfy the triangle-inequalities (all permutations of 
$\chi_{1,2}+\chi_{2,3}-\chi_{1,3} \leq 1$,
see (\ref{eqn:triangle}) and also \eqref{eq:hypermetric} with $b=(1,-1,1,0,\ldots,0)$). For points $\chi \in \tcf_{n} \cap \{0,1\}^{E_n}$, viewed via the support graph $K_n$, this implies for any triple of nodes $i,j,k$, where the edges $\{i,j\}$ and $\{j,k\}$ have value 1, that also the edge $\{i,k\}$ has value 1. Thus, a simple inductive argument shows: for any pair of nodes $i,j$, which are connected by a path of edges with value 1, the edge from $i$ to $j$ has also value 1. This shows that the points in $\tcf_{n} \cap \{0,1\}^{E_n}$ are clique partition points. 
In order to see that any clique partition point $\gamma(\{C_1,\dots,C_k\})$ belongs to $\tcf_{n} \cap \{0,1\}^{E_n}$ choose $\Omega = \{1,\ldots,k\}$ with  uniform distribution $\PP$ and $A_{i} = \{r_{i}\}$, $1 \leq i \leq n$, with $r_{i}$ uniquely determined by $i \in C_{r_{i}}$ and apply Lemma~\ref{lemma:TCFevents}.
\end{proof}
For $n \leq 4$ the clique partition polytope and $\tcf_n$ even coincide. 

\begin{proposition}\label{prop:cpplowdim} 
$\tcf_n = \cpp_n$ for $n \leq 4$.
\end{proposition}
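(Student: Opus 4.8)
The inclusion $\cpp_n \subset \tcf_n$ is already available from Proposition~\ref{prop:cpp}, so for $n \leq 4$ the task is to prove the reverse inclusion $\tcf_n \subset \cpp_n$. I would do this by exhibiting a finite $\mathcal{H}$-representation of $\cpp_n$ and showing that every $\chi \in \tcf_n$ satisfies it. The natural candidate inequalities are the triangle-type hypermetric inequalities already identified: for $n=3$, the three permutations of $\chi_{12} + \chi_{23} - \chi_{13} \leq 1$ (equivalently \eqref{eq:hypermetric} with $b = (1,-1,1)$), together with the bounds $0 \leq \chi_{ij} \leq 1$ (the latter being \eqref{eq:hypermetric} with $b=(1,-1,0)$ and $b=(1,0,0)$, or the elementary bound $\chi_{ij} = \PP(A_i \mid A_j) \in [0,1]$). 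For $n=4$ one adds the analogous triangle inequalities on every $3$-element subset of $V_4$. The first step is to verify (a well-known fact about clique partition / correlation polytopes, see \cite{groetschelwakabayashi_90} or \cite{dezalaurent_97}) that these inequalities give a complete $\mathcal{H}$-representation of $\cpp_n$ for $n \leq 4$; the second step, which is immediate from Lemma~\ref{lemma:hypervalid} and $\chi_{ij} \in [0,1]$, is that every TCF satisfies them. Combining the two gives $\tcf_n \subseteq \{x : x \text{ satisfies the triangle and box inequalities}\} = \cpp_n$.

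For $n=3$ this is very concrete: $\tcf_3 \subseteq \RR^{E_3} = \RR^3$, and the polytope cut out by $0 \leq x_{ij} \leq 1$ and the three triangle inequalities has exactly the five clique partition points of $V_3$ as its vertices (the point $(1,1,1)$ from the trivial partition $\{V_3\}$, the three points like $(1,0,0)$ from partitions into a pair and a singleton, and $(0,0,0)$ from the partition into singletons). One checks directly — or cites the corresponding computation for the correlation polytope $\cor^{\square}_3$, which is affinely equivalent — that these five points are precisely the vertices, hence $\cpp_3$ equals this polytope, and $\tcf_3 \subseteq \cpp_3$.

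For $n=4$ the cleanest route is to avoid recomputing a vertex enumeration and instead reduce to $n=3$ via a projection/section argument, or to quote the known facet description of $\cpp_4$ directly. The reduction argument: suppose $\chi \in \tcf_4$; every restriction $\chi|_{M \times M}$ with $|M| = 3$ lies in $\tcf_3 = \cpp_3$ by Corollary~\ref{cor:TCFextension} and the $n=3$ case, so $\chi$ satisfies all triangle and box inequalities on all triples. These are exactly the defining inequalities of $\cpp_4$ (this is the fact to be cited or checked), whence $\chi \in \cpp_4$. Alternatively, one simply notes that for $n \leq 4$ the clique partition polytope is known to be completely described by triangle inequalities together with $0 \le x_{ij}$ and $x_{ij}\le 1$, so that $\tcf_4$, which satisfies all of these and contains $\cpp_4$, must equal $\cpp_4$.

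The main obstacle is purely the combinatorial/polyhedral bookkeeping for $n=4$: one must be certain that the triangle inequalities (on all $\binom{4}{3}=4$ triples) plus the box constraints really do suffice to cut out $\cpp_4$ and introduce no extra vertices. This is the first point where the description of $\cpp_n$ stops being completely trivial (at $n=5$ genuinely new facets appear, which is why the proposition stops at $n=4$), so the proof hinges on invoking the known facet structure of $\cpp_4$ (e.g.\ from \cite{groetschelwakabayashi_90}) or on carrying out the short vertex enumeration explicitly. Everything else — validity of the inequalities for TCFs, and the containment $\cpp_n \subseteq \tcf_n$ — is already in hand from Lemma~\ref{lemma:hypervalid} and Proposition~\ref{prop:cpp}.
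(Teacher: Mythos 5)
There is a genuine gap at $n=4$, and it lies exactly where you flagged uncertainty. Your plan requires that the box constraints $0 \le x_{ij} \le 1$ together with the triangle inequalities on all $\binom{4}{3}$ triples give a complete $\mathcal{H}$-representation of $\cpp_4$. This is false: the tetrahedron inequality, i.e.\ \eqref{eq:hypermetric} with $b=(1,1,1,-1)$, namely
\begin{align*}
x_{14}+x_{24}+x_{34}-x_{12}-x_{13}-x_{23} \;\le\; 1,
\end{align*}
is a facet of $\cpp_4=\tcf_4$ (see Table~\ref{table:hypermetricTCFfacets}: for $n=4$ there are $6$ positivity facets, $12$ triangle facets, and $4$ tetrahedron facets), and it is not implied by box and triangle. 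A concrete witness: the point with $x_{14}=x_{24}=x_{34}=\tfrac12$ and $x_{12}=x_{13}=x_{23}=0$ satisfies every box and every triangle inequality (each triangle at a triple containing vertex $4$ evaluates to $\tfrac12+\tfrac12-0=1$), but the left-hand side of the tetrahedron inequality equals $\tfrac32>1$. This point also fails to lie in $\cpp_4$: any clique partition point with $x_{14}=x_{24}=1$ forces $x_{12}=1$ by transitivity, so the only clique partition points with $x_{12}=x_{13}=x_{23}=0$ are the origin and the three unit vectors $e_{14},e_{24},e_{34}$, whose convex hull requires $x_{14}+x_{24}+x_{34}\le 1$. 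So your proposed system strictly contains $\cpp_4$, and the step ``satisfies triangle and box $\Rightarrow$ in $\cpp_4$'' fails. (Your reduction to $n=3$ via Corollary~\ref{cor:TCFextension} cannot close this gap either, since the tetrahedron inequality genuinely involves all four indices.)

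The repair is easy and stays within your framework: add the four tetrahedron inequalities to the inequality system. They are hypermetric, hence valid for every element of $\tcf_4$ by Lemma~\ref{lemma:hypervalid}, and with them the system does become a facet representation of $\cpp_4$; then $\tcf_4 \subseteq \cpp_4$ follows as you intended, and combined with Proposition~\ref{prop:cpp} you get equality. Note also that the paper's own proof takes the dual route: rather than arguing through facets, it computes $\Ex(\tcf_n)$ explicitly from the characterization \eqref{eqn:psin} (recorded in \cite[Tables~3.1, 3.3]{strokorb_13} and rechecked with \texttt{polymake}) and observes that it coincides with $\Ex(\cpp_n)$. Your inequality-based route is in principle cleaner and avoids a vertex enumeration, but it needs the correct facet list for $\cpp_4$.
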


\begin{proof}
For $n\leq 4$ we computed explicitly that $\Ex(\tcf_n)=\Ex(\cpp_n)$ from the characterization (\ref{eqn:psin}) \citep[Tables~3.1 and  3.3]{strokorb_13} and confirmed this result using the software \texttt{polymake}. This implies $\tcf_n = \cpp_{n}$ for $n \leq 4$.
\end{proof}

\paragraph{\textbf{\upshape Low-dimensional phenomena}}
Even though for $n \leq 4$ the polytope $\tcf_n$ and the clique partition polytope $\cpp_n$ coincide, the property $\tcf_n = \cpp_n$ will turn out to be a low-dimensional phenomenon. Starting from $n=5$ the vertices of $\tcf_n$ are not $\{0,1\}$-valued anymore (see Corollary \ref{cor:nocpp} in Section~\ref{sect:generalresults}), in particular $\cpp_n \subsetneq \tcf_n$ for $n \geq 5$. Still, up to $n \leq 5$ all facet inducing inequalities of $\tcf_n$ turn out to be hypermetric and one might be tempted to believe that certain hypermetric inequalities provide an $\mathcal{H}$-representation for $\tcf_n$ also in higher dimensions. Again, this property constitutes only another low-dimensional phenomenon. Starting from  $n=6$ not all facets of $\tcf_n$ are hypermetric anymore (see Proposition \ref{prop:nonhypfacets} in Section~\ref{sect:generalresults}).


\section{Sophisticated results on the geometry of $\tcf_n$}\label{sect:generalresults}

A fundamental observations in this section concerns the lifting of vertices and facets to higher dimensions (Section~\ref{sect:lifting}). It means that vertices (and facets) of $\tcf_n$ will also appear as vertices (and facets) of $\tcf_{n'}$ for $n'>n$ if the coordinates (or coefficients) are filled up with zeros at appropriate places. Note that both statemenents are not evident, but a deep structural result only revealed by some delicate combinatorial arguments. Subsequently, we prove that every rational number in the interval $[0,1]$ will appear as coordinate value in the vertex set of $\tcf_n$ starting from a sufficiently large $n$ (Proposition~\ref{prop:unbounded} in Section~\ref{sect:unbounded}) and that $\tcf_n$ possesses non-hypermetric facets starting from $n \geq 6$ (Proposition~\ref{prop:nonhypfacets} in Section~\ref{sect:nonhypfacets}).
Taken together, these results give insights into the rapidly growing complexity of $\tcf_n$ as $n$ grows and confound the aim of a full description of vertices and facets of $\tcf_n$ for arbitrary $n$.
Finally, Section~\ref{sect:corcut} provides an alternative (``dual'') description of the polytope $\tcf_n$ (which we recognized already as the \emph{projection} of the polytope $\Theta_n$) as an \emph{intersection} with the so-called correlation polytope or, equivalently, with the so-called cut-polytope.


\subsection{\textbf{\upshape Lifting of vertices and facets to higher dimensions}}\label{sect:lifting}

First, we deal with connections between  $\tcf_{n}$ and $\tcf_{n+1}$.
A particularly important feature is the lifting property. That is every vertex of $\tcf_n$ will appear again in the list of vertices of $\tcf_{n+1}$ with some zeros added.

\begin{lemma}[Projections and liftings of points and vertices]\label{lemma:zeroliftingvertices}
$\phantom{a}$\\
For $\chi \in \tcf_{n+1}$ let $\chi|_{K_{n}}$ denote the restriction of $\chi$ to the subgraph $K_{n} \subset K_{n+1}$ (delete all $\chi_{i,n+1}$, $1 \leq i \leq n$). Conversely, let $\chi^0 \in \RR^{E_{n+1}}$ denote the extension of a point $\chi \in \tcf_{n}$ by 
\begin{align*}
\chi^0_{i,n+1} = 0, \quad 1 \leq i \leq n.
\end{align*} 
\begin{enumerate}[a)]
\item  The assignment $\chi \mapsto \chi|_{K_n}$ maps $\tcf_{n+1}$ onto $\tcf_{n}$.
\item The assignment $\chi \mapsto \chi^0$ embeds $\tcf_{n}$ into $\tcf_{n+1}$ and $\Ex(\tcf_{n})$ into $\Ex(\tcf_{n+1})$.
\item If $ \chi \in \Ex(\tcf_{n+1})$ and $\chi_{i,n+1}=0$ for all $1 \leq i \leq n$, then  $\chi|_{K_{n}} \in \Ex(\tcf_{n})$.
\end{enumerate}
\end{lemma}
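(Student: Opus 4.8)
The plan is to treat the three parts in order, using throughout the event-based characterization of Lemma \ref{lemma:TCFevents} (together with Remark \ref{remark:TCFevents}) and the fact that $\tcf_n$ is a convex polytope (Corollary \ref{cor:finitepolytope}). For part a), surjectivity of $\chi \mapsto \chi|_{K_n}$ is immediate: given $\chi \in \tcf_n$, realize it by events $A_1,\dots,A_n$ with $\PP(A_1)=\dots=\PP(A_n)>0$ and simply set $A_{n+1}:=A_1$; then the resulting element of $\tcf_{n+1}$ restricts to $\chi$ on $K_n$. (That $\chi|_{K_n}\in\tcf_n$ whenever $\chi\in\tcf_{n+1}$ is just restriction of the realizing events, as in Corollary \ref{cor:TCFextension}.) So $\chi\mapsto\chi|_{K_n}$ is onto.

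For part b), the map $\chi\mapsto\chi^0$ is clearly affine and injective, so it suffices to check $\chi^0\in\tcf_{n+1}$ and that it sends vertices to vertices. To see $\chi^0\in\tcf_{n+1}$: realize $\chi\in\tcf_n$ by events $A_1,\dots,A_n$ with $\PP(A_i)=c\le 1/(n+1)$ (legitimate by Remark \ref{remark:TCFevents}), enlarge $\Omega$ so that $A_1\cup\dots\cup A_n$ does not exhaust it, and pick $A_{n+1}$ disjoint from all of $A_1,\dots,A_n$ with $\PP(A_{n+1})=c$; then $\PP(A_i\mid A_{n+1})=0$ for $i\le n$ and $\PP(A_{n+1}\mid A_j)=0$ for $j\le n$, giving exactly $\chi^0$. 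For the vertex statement, note that $\chi\mapsto\chi^0$ is an affine isomorphism onto its image, which is the affine (indeed linear) slice $H:=\{y\in\RR^{E_{n+1}} : y_{i,n+1}=0,\ 1\le i\le n\}$; since the image $\tcf_n^0:=\{\chi^0:\chi\in\tcf_n\}$ equals $\tcf_{n+1}\cap H$ as a set -- the inclusion $\subset$ was just shown, and for $\supset$ any $\chi'\in\tcf_{n+1}$ with $\chi'_{i,n+1}=0$ restricts to $\chi'|_{K_n}\in\tcf_n$ with $(\chi'|_{K_n})^0=\chi'$ -- a point of $\tcf_n$ is extremal iff its image in $\tcf_{n+1}\cap H$ is extremal in that slice. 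Finally, extremality in the slice $\tcf_{n+1}\cap H$ is the same as extremality in $\tcf_{n+1}$: if $\chi^0=\tfrac12(u+v)$ with $u,v\in\tcf_{n+1}$, then, since all coordinates $u_{i,n+1},v_{i,n+1}$ lie in $[0,1]$ (TCF entries are probabilities) and average to $0$, they must all vanish, so $u,v\in H$, and extremality of $\chi^0$ in the slice forces $u=v=\chi^0$. This proves $\Ex(\tcf_n)$ embeds into $\Ex(\tcf_{n+1})$.

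For part c), suppose $\chi\in\Ex(\tcf_{n+1})$ with $\chi_{i,n+1}=0$ for all $i\le n$, i.e.\ $\chi\in\tcf_{n+1}\cap H=\tcf_n^0$. Write $\chi=\rho^0$ with $\rho:=\chi|_{K_n}\in\tcf_n$. If $\rho=\tfrac12(\rho_1+\rho_2)$ with $\rho_1,\rho_2\in\tcf_n$, then applying the (affine) lifting of part b) gives $\chi=\rho^0=\tfrac12(\rho_1^0+\rho_2^0)$ with $\rho_1^0,\rho_2^0\in\tcf_{n+1}$; extremality of $\chi$ yields $\rho_1^0=\rho_2^0=\chi$, hence $\rho_1=\rho_2=\rho$. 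So $\rho=\chi|_{K_n}\in\Ex(\tcf_n)$.

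The only genuinely delicate point is the set equality $\tcf_{n+1}\cap H = \tcf_n^0$ in part b), and in particular the ``$\supset$'' direction that reconstructs a realization on $n+1$ events with the $(n{+}1)$-st one stochastically independent-looking (actually: disjoint) from the rest; this is exactly where Remark \ref{remark:TCFevents} -- the freedom to choose the common probability $c$ as small as we like and to enlarge the sample space -- does the work, and it is the step I would write most carefully. Everything else is formal manipulation of affine slices of a polytope together with the elementary observation that an entry of a TCF is a conditional probability and hence lies in $[0,1]$, which is what kills the off-coordinates in the convex-decomposition arguments.
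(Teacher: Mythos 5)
Your proof is correct and follows essentially the same route as the paper's. The only cosmetic differences are that you prove surjectivity in a) directly by copying an event (the paper defers to b)), and you frame the extremality argument in b) abstractly via the affine-slice identification $\tcf_n^0 = \tcf_{n+1}\cap H$, whereas the paper manipulates the convex decomposition directly; both arguments hinge on the same observation that non-negativity of TCF entries forces the new coordinates of any decomposing pair to vanish.
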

\begin{proof}
\begin{enumerate}[a)]
\item Let $Y_{1},\ldots, Y_{n+1}$ be a binary process that models $\chi$. Simply deleting $Y_{n+1}$ gives a model for $\chi|_{K_{n}} \in \tcf_{n}$. Surjectivity follows from b).
\item Let $Y_{1},\ldots, Y_{n}$ be a binary process that models $\chi$. Let $a = \EE(Y_{1})$. Add a disjoint point $\omega_{0}$ to the underlying probability space $\Omega$ and replace the probability measure $\PP$ by  $\frac{1}{1+a}\cdot \PP|_{\Omega} + \frac{a}{1+a}\cdot \delta_{\omega_{0}}$. Extend $Y_{1},\ldots,Y_{n}$ by 0 on $\omega_{0}$, let $Y_{n+1} = \eins_{\{\omega_{0}\}}$. Now, $Y_{1},\ldots,Y_{n+1}$ is a model for $\chi^0$, since $Y_{i}Y_{n+1} = 0$, $1 \leq i \leq n$.
If $\chi^0 \notin \Ex(\tcf_{n+1})$, there is a representation $\chi^0 = \lambda y + (1-\lambda)z$, with $y,z\in \tcf_{n+1}$, $0 < \lambda<1, y \neq z$. 
Since $\chi^0$ is zero on the new edges, the points $y,z$ also have to be zero on the new edges, so $y|_{K_{n}} \neq z|_{K_{n}}$ and $y|_{K_{n}},z|_{K_{n}} \in \tcf_{n}$ by a). Thus, $\chi = \chi^0|_{K_{n}} \notin \Ex(\tcf_{n})$.
\item If $\chi|_{K_{n}} \notin \Ex(\tcf_{n})$, then  $\chi|_{K_{n}} = \lambda y + (1-\lambda)z$, with $y,z\in \tcf_{n}$, $0 < \lambda<1, y \neq z$. By b) we know $y^0, z^0 \in \tcf_{n+1}$. 
Since  $\chi_{i,n+1}=0$ for all $1 \leq i \leq n$, we have $\chi = (\chi|_{K_{n}})^{0} = \lambda y^0 + (1-\lambda)z^0 \notin \Ex(\tcf_{n+1})$.
\end{enumerate}
\end{proof}

\noindent We call $\chi^0$ a \emph{lifting} of $\chi$. The following lemma generalizes the lifting of vertices and will be applied to deduce Proposition~\ref{prop:unbounded}.

\begin{lemma}[Lifting of vertices arising from partitions] \label{lemma:unionsofvertices}
$\phantom{a}$\\
Let $C_{1},\ldots,C_{k}\subset  V_n$ be disjoint subsets of the vertex set $V_n=\{1,\ldots,n\}$ each containing at least two elements of $V_n$.
For $1\leq r \leq k$  let $\chi^{r} \in \Ex(\tcf(C_r))$. Similarly to the interpretation of TCFs on $V_n=\{1,\dots,n\}$ as elements of $\RR^{E_n}$, we interpret $\chi^r$ as an element of $\RR^{E(C_r)}$, where $E(C_r)$ is the set of edges of the complete graph with vertex set $C_r \subset V_n$.
Define $\chi \in \RR^{E_{n}}$ by 
\begin{align*}
\chi_{ij} 
= \left\{ 
\begin{array}{ll}
\chi^r_{ij} & \quad \text{\normalfont if } \{i,j\}  \subset C_r  \text{ \normalfont for some } 1 \leq r \leq k, \\
0 & \quad \text{\normalfont else}.
\end{array}
\right.
\end{align*}
Then $\chi \in \Ex(\tcf_{n})$.
\end{lemma}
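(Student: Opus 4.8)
The plan is to realize $\chi$ by an explicit $\{0,1\}$-valued model obtained by gluing together models for the individual $\chi^r$, and then to verify that the resulting point is extremal by exploiting the lifting lemma (Lemma~\ref{lemma:zeroliftingvertices}) together with the disjointness of the supports $C_1,\dots,C_k$. First I would record that each $\chi^r \in \Ex(\tcf(C_r))$ can, by Lemma~\ref{lemma:TCFevents} and Remark~\ref{remark:TCFevents}, be written as $\chi^r_{ij}=\PP_r(A^r_i\mid A^r_j)$ on a finite probability space $(\Omega_r,\A_r,\PP_r)$ with $\PP_r(A^r_i)=c_r$ for all $i\in C_r$, where we are free to choose the constant $c_r$ as small as we like. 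The natural construction is to take a common constant $c$ with $c \le 1/n$ (feasible since each $c_r$ may be chosen arbitrarily small), to form a probability space $\Omega$ consisting of one ``block'' isomorphic to $\Omega_r$ for each $r$ plus possibly an extra atom to absorb leftover mass, assign the block for $r$ total mass proportional to $\PP_r$-mass, and define, for $i\in C_r$, the event $A_i$ to be the copy of $A^r_i$ inside the $r$-th block. Then $\PP(A_i)=c$ for every $i$, and for $i,j$ in the \emph{same} block $C_r$ we get $\PP(A_i\mid A_j)=\PP_r(A^r_i\mid A^r_j)=\chi^r_{ij}$, while for $i\in C_r$, $j\in C_s$ with $r\neq s$ the events $A_i$ and $A_j$ are disjoint, so $\PP(A_i\mid A_j)=0=\chi_{ij}$. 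For an index $i$ not lying in any $C_r$ there is no coordinate to match, and it suffices to put $A_i$ into a dedicated disjoint atom so that $\PP(A_i)=c$ and $A_i\cap A_j=\emptyset$ for all $j\neq i$; this forces $\chi_{ij}=0$ for all such $j$, matching the definition. By Lemma~\ref{lemma:TCFevents} this shows $\chi\in\tcf_n$.

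\textbf{Extremality.} The heart of the argument is to show $\chi\in\Ex(\tcf_n)$. Suppose $\chi=\lambda y+(1-\lambda)z$ with $y,z\in\tcf_n$, $0<\lambda<1$. Since $\chi_{ij}=0$ whenever $\{i,j\}\not\subset C_r$ for every $r$, and all coordinates of $y,z$ are non-negative (elements of $\tcf_n\subset[0,1]^{E_n}$), both $y$ and $z$ must vanish on exactly those edges too; hence $y$ and $z$ are supported on $\bigcup_r E(C_r)$. Now restrict to $C_r$: by Lemma~\ref{lemma:zeroliftingvertices}a), $y|_{K_{|C_r|}}$ (the restriction to the complete graph on $C_r$, obtained by deleting all other coordinates) lies in $\tcf(C_r)$, and likewise for $z$, and on $E(C_r)$ we have $\chi|_{E(C_r)}=\chi^r=\lambda\,y|_{E(C_r)}+(1-\lambda)\,z|_{E(C_r)}$. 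Extremality of $\chi^r$ in $\tcf(C_r)$ forces $y|_{E(C_r)}=z|_{E(C_r)}=\chi^r$. As this holds for every $r$ and $y,z$ vanish off $\bigcup_r E(C_r)$, we conclude $y=z=\chi$, so $\chi$ is a vertex.

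\textbf{Main obstacle.} The construction itself is routine bookkeeping with finite probability spaces; the one point that needs care is making sure that the restriction operation of Lemma~\ref{lemma:zeroliftingvertices}a) applies cleanly when $|C_r|<n$, i.e.\ that ``deleting all coordinates not in $E(C_r)$'' really does land inside $\tcf(C_r)$ and intertwines with the convex decomposition. This is just iterated application of part a) of that lemma (delete the outside vertices one at a time), so it presents no real difficulty. The only genuinely delicate hypothesis is that each $C_r$ has \emph{at least two} elements, which is exactly what guarantees that $\chi^r$ lives in a space $\tcf(C_r)$ with $|C_r|\ge 2$ (so the edge set $E(C_r)$ is nonempty and the notion of vertex is the intended one) and that the indices outside $\bigcup_r C_r$ — which could a priori destroy extremality — are handled by the disjoint-atom trick exactly as in the proof of Lemma~\ref{lemma:zeroliftingvertices}b). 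Thus the proof is essentially a combination of the event-model reformulation (Lemma~\ref{lemma:TCFevents}), the freedom in the marginal constant (Remark~\ref{remark:TCFevents}), and the projection/lifting mechanics of Lemma~\ref{lemma:zeroliftingvertices}.
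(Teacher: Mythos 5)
Your proof is correct and follows essentially the same approach as the paper: build an explicit $\{0,1\}$-valued (event) model by normalizing a disjoint union of per-block models with a common marginal probability (via Lemma~\ref{lemma:TCFevents} and Remark~\ref{remark:TCFevents}), then obtain extremality by noting that a convex decomposition must vanish wherever $\chi$ vanishes, restricting to each block via Lemma~\ref{lemma:zeroliftingvertices}a), and invoking extremality of each $\chi^r$ in $\tcf(C_r)$. The only cosmetic differences are that the paper first reduces to the case $V_n=\bigcup_r C_r$ using the lifting of vertices (Lemma~\ref{lemma:zeroliftingvertices}b)) whereas you handle the leftover indices directly by disjoint atoms, and the paper phrases extremality as a contradiction whereas you derive $y=z=\chi$ directly.
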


\begin{proof}
Because of the lifting property (Lemma~\ref{lemma:zeroliftingvertices}), it suffices to consider the case $V_n=\bigcup_{r=1}^k C_r$, where $C_r=\{i^{(r)}_1,\dots,i^{(r)}_{|C_r|}\}$.
First, we show that $\chi \in \tcf_{n}$. 
To this end, choose (finite) set models 
\begin{align*}
(\Omega_{r}, \PP_{r}), \quad A^r_{i^{(r)}_1},\ldots,A^r_{i^{(r)}_{|C_{r}|}} \subset \Omega_{r} , \quad 1 \leq r \leq k
\end{align*} 
for $\chi^{r}$  as in Lemma~\ref{lemma:TCFevents} such that $\chi^{(r)}_{ij} = \PP(A^r_{i} \,|\, A^r_{j})$. 
By Remark~\ref{remark:TCFevents} these models can be chosen such that $\PP_{r}(A^r_{i})$ does not depend on $r$. Then a stochastic model for $\chi$ is obtained through the normalized disjoint union of these models, \ie where $\Omega=\bigcup_{r=1}^k \Omega_r$, 
$\PP=\frac{1}{k}\sum_{r=1}^k \PP_r(\cdot \cap \Omega_r)$ and $A_i=A^r_i$ if $i \in C_r$. (Note that for each $i\in V_n$ there exists a unique $r$ with $i \in C_r$, since the sets $C_r$ are disjoint and cover $V_n$.)

Now, we show that $\chi \in \Ex(\tcf_{n})$. Suppose not. Then $\chi = \lambda y + (1-\lambda)z$ with $1 < \lambda <0$ and $ y,z \in \tcf_{n}$ with $y \neq z$. Necessarily $y_{ij} = 0$ and $z_{ij}= 0$ whenever $\chi_{ij}=0$. Thus, $y|_{K_n^{r}} \neq z|_{K_n^{r}}$ for some $1 \leq r \leq k$ when $K^r_n$ denotes the complete subgraph of $K_n$ defined by $C_r$. 
Since $y|_{K_n^{r}}, z|_{K_n^{r}} \in \tcf(C_r)$ by Lemma~\ref{lemma:zeroliftingvertices}, we obtain $\chi^r=\chi|_{K_n^{r}} = \lambda y|_{K_n^{r}} + (1-\lambda)z|_{K_n^{r}}$ contradicting $\chi^{r} \in \Ex(\tcf(C_r))$.
\end{proof}

In order to deduce the lifting property also for inequalities and facets, we adapt ideas from \cite[Lemma~26.5.2]{dezalaurent_97}. 
We show that, starting from $n = 3$, no facet inducing inequality will ever become obsolete as $n$ grows. For instance, the triangle inequality $(\ref{eqn:triangle})$ cannot be deduced from a set of other valid inequalities for $\tcf_n$.
One needs $n \geq 3$, since the inequality $\chi_{12} \leq 1$, although facet-inducing for $n=2$, is no longer facet-inducing for $n\geq3$, see Table~\ref{table:hypermetricTCFfacets} and Proposition~\ref{prop:zeroliftingfacets} b).


\begin{proposition}[Lifting of valid inequalities and facets]\label{prop:zeroliftingfacets}
$\phantom{a}$\\
Suppose that 
\begin{align}\label{eqn:tcfnfacet}
a_{0} + a_{1,2}\chi_{1,2}+ \ldots +a_{n-1,n}\chi_{n-1,n} \geq 0
\end{align} 
is a valid inequality for $\tcf_{n}$. The lifting of this inequality to $\RR^{E_{n+1}}$ is the corresponding inequality which is extended by 
\begin{align*}
a_{i,n+1}= 0, \quad 1 \leq i \leq n.
\end{align*} 
\begin{enumerate}[a)]
\item Every lifting of a valid inequality of $\tcf_{n}$ defines a valid inequality of $\tcf_{n+1}$. 
\item For $n \geq 3$, the lifting of a facet of $\tcf_{n}$ defines a facet of $\tcf_{n+1}$.
\end{enumerate}
\end{proposition}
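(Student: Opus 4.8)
\emph{Part a)} The idea is that the lifting merely restricts the inequality to a ``face'' coordinate-hyperplane slice of $\tcf_{n+1}$. Given $\chi \in \tcf_{n+1}$, one wants $a_0 + \sum_{1\leq i<j\leq n} a_{ij}\chi_{ij} \geq 0$ (the lifted inequality has no $\chi_{i,n+1}$ terms). Since by Lemma~\ref{lemma:zeroliftingvertices}~a) the restriction $\chi|_{K_n}$ lies in $\tcf_n$, and the lifted inequality evaluated at $\chi$ equals the original inequality evaluated at $\chi|_{K_n}$, validity is immediate. So part a) is essentially a one-line observation once Lemma~\ref{lemma:zeroliftingvertices}~a) is invoked.

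\emph{Part b)} Here the content lies. Suppose \eqref{eqn:tcfnfacet} is facet inducing for $\tcf_n$, so there exist $d := \dim(\tcf_n) = \binom{n}{2}$ affinely independent points $\chi^{(1)},\dots,\chi^{(d)} \in \tcf_n$ satisfying it with equality. First I would note $\dim(\tcf_{n+1}) = \binom{n+1}{2} = d + n$ (this should be recorded or follow from earlier facts, e.g.\ that $\tcf_{n+1}$ is full-dimensional in $\RR^{E_{n+1}}$ — it contains an open neighbourhood of the all-ones point, or one argues via the clique partition points spanning). So I need to exhibit $d + n$ affinely independent points of $\tcf_{n+1}$ on the lifted hyperplane $\{a_0 + \sum_{i<j\leq n} a_{ij} x_{ij} = 0\}$. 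The first batch is the zero-liftings $(\chi^{(1)})^0,\dots,(\chi^{(d)})^0$ obtained from Lemma~\ref{lemma:zeroliftingvertices}~b); these all have $x_{i,n+1}=0$, lie in $\tcf_{n+1}$, satisfy the lifted equation, and are affinely independent (the $\binom{n}{2}$ coordinates on $K_n$ already distinguish them). I then need $n$ further points, on the same hyperplane, that together with the first batch are affinely independent — equivalently, $n$ points whose $(x_{i,n+1})_{1\leq i\leq n}$ components, together with the constant vector, span an $n$-dimensional affine space, while still satisfying the lifted equality.

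The construction of these extra $n$ points is the main obstacle, and it is where the combinatorial/probabilistic input enters (as flagged by the reference to \cite[Lemma~26.5.2]{dezalaurent_97}). The plan: fix a point $\chi^\star \in \tcf_n$ on the facet \eqref{eqn:tcfnfacet} (e.g.\ $\chi^\star = \chi^{(1)}$); then for each $\ell = 1,\dots,n$ build a point $\zeta^{(\ell)} \in \tcf_{n+1}$ whose restriction to $K_n$ is $\chi^\star$ (so the lifted equality holds automatically) but with $\zeta^{(\ell)}_{\ell,n+1} > 0$, or more robustly with the vectors $(\zeta^{(\ell)}_{i,n+1})_{i=1}^n$ chosen to be affinely independent as $\ell$ varies. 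Concretely I would realize $\chi^\star$ by events $A_1,\dots,A_n$ in a probability space with $\PP(A_i)=c$ small (Remark~\ref{remark:TCFevents}), then set $A_{n+1}^{(\ell)} := A_\ell$ — this gives $\zeta^{(\ell)}_{i,n+1} = \PP(A_i \mid A_\ell) = \chi^\star_{i\ell}$ for $i\neq \ell$ and $=1$ for $i=\ell$, while the $K_n$-block is unchanged. One must then check that the $n$ vectors $v^{(\ell)} := (\chi^\star_{1\ell},\dots,\chi^\star_{\ell-1,\ell},1,\chi^\star_{\ell+1,\ell},\dots,\chi^\star_{n\ell}) \in \RR^n$, $\ell=1,\dots,n$, are affinely independent; since these are the columns (with $1$'s on the diagonal) of the symmetric matrix associated to $\chi^\star$, affine independence can fail only for degenerate $\chi^\star$, so one should pick $\chi^\star$ among the $\chi^{(j)}$ appropriately, or perturb: replace $A_{n+1}$ by $A_\ell$ with probability $p$ and by a fresh disjoint atom with probability $1-p$, tuning $p\in(0,1]$ to guarantee the full rank. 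I expect that a clean choice is $A_{n+1}^{(\ell)}$ equal to $A_\ell$ intersected with an independent coin, producing $\zeta^{(\ell)}_{i,n+1} = p\,\chi^\star_{i\ell}$ for $i \neq \ell$ and $= p$ for $i = \ell$; then the relevant $n\times(n+1)$ matrix (rows = points, columns = the $n$ new coordinates plus a constant column) has a triangular-like structure forcing rank $n+1$. Assembling: the $d$ liftings plus these $n$ points give $d+n = \dim(\tcf_{n+1})$ affinely independent points on the lifted facet hyperplane, so the lifted inequality is facet inducing — provided it is genuinely a valid inequality, which is part a), and provided $n\geq 3$ is not actually needed for this direction but rather (as the remark explains) to ensure the \emph{conclusion} is nonvacuous, i.e.\ that $\chi_{12}\leq 1$-type inequalities that are facets at $n=2$ are handled separately. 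I would end by remarking that the hypothesis $n\geq 3$ enters only because for $n=2$ the single nontrivial facet $\chi_{12}\leq 1$ ceases to be facet inducing at $n=3$, consistent with Table~\ref{table:hypermetricTCFfacets}.
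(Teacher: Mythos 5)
Part a) matches the paper exactly.

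For part b), the skeleton (zero-liftings of the $m := \binom{n}{2}$ facet points, plus $n$ extra points with nonzero last coordinates) is correct, but your construction of the extra $n$ points has a genuine gap and differs from the paper in a way that matters. You fix one base point $\chi^\star$ on the facet and vary the coordinate $\ell$ to duplicate, producing the vectors $v^{(\ell)} = (\text{column } \ell \text{ of } X^\star)$. After subtracting the already-present zero-lifting of $\chi^\star$, the contribution of the new batch is exactly the column space of $X^\star$, so you need $X^\star$ to be nonsingular. You acknowledge this might fail ``for degenerate $\chi^\star$'' but never prove that a nondegenerate choice exists, and the coin-flip fix you propose does not help: it replaces the new coordinates $v^{(\ell)}$ by $p\,v^{(\ell)}$, which rescales the matrix by $p$ and hence leaves its rank unchanged. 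So the whole argument rests on an unproved (and nontrivial) claim that every facet contains a point whose associated matrix is invertible. Worse, your argument as written never actually uses $n\geq 3$, which cannot be right: for $n=2$ the facet $\chi_{12}\leq 1$ of $\tcf_2$ does \emph{not} lift to a facet of $\tcf_3$ (Table~\ref{table:hypermetricTCFfacets}), and indeed on that facet the only point is $v_1$ with $X^\star=\binom{1\ 1}{1\ 1}$ singular --- exactly the degeneracy your approach cannot rule out. So the assumption $n\geq 3$ must enter the proof, not merely the discussion of why the conclusion is nonvacuous.

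The paper avoids the nonsingularity issue by a dual move: instead of one base point and varying $\ell$, it fixes one coordinate $k$ to duplicate and varies the base point. Since (\ref{eqn:tcfnfacet}) is a facet, some $a_{ij}\neq 0$, and since $n\geq 3$ there is an index $k\notin\{i,j\}$ --- this is precisely where $n\geq 3$ is used. WLOG $k=1$. Writing $W\subset\RR^{m+1}$ for the span of the vectors $(1,\chi^k)$, one has $\dim W=m$ and $W\perp a$; because a nonzero coefficient of $a$ sits outside the first $n$ coordinates, $W\cap(\{0\}^n\oplus\RR^{m+1-n})$ has dimension at most $m-n$, so the projection of $W$ onto the first $n$ coordinates is surjective. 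This guarantees that among the $m$ points $\chi^k$ there are $n$ whose projections $(1,\chi^k_{1,2},\dots,\chi^k_{1,n})$ are linearly independent; duplicating coordinate 1 (i.e.\ $Y_{n+1}:=Y_1$) for those $n$ points produces the required $n$ affinely independent supplements. This dimension count is what your approach is missing and is the actual content of the proof.
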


\begin{proof} 
\begin{enumerate}[a)]
\item The lifting of a valid inequality for $\tcf_n$ is always valid for $\tcf_{n+1}$, even for $n=2$, since  the lifted equation applied to $\chi \in \tcf_{n+1}$ returns the same value as the orginal equation applied to $\chi|_{K_{n}}$, which is a point of $\tcf_{n}$, see Lemma~\ref{lemma:zeroliftingvertices}.
\item Now suppose that (\ref{eqn:tcfnfacet}) is a facet for $\tcf_{n}$. By the above, its lifting is a valid inequality for $\tcf_{n+1}$. We show that it defines a facet if $n \geq 3$. First, note that there has to be a coefficient $a_{i,j} \neq 0$. Since $n\geq 3$, there is some index $k \notin \{i,j\}$. To simplify notation, we assume $k=1 < i < j \leq n$. 

Further, let $m := {\binom{n}{2}}$ and let $a = (a_{0},a_{1,2},a_{1,3},\ldots,a_{n-1,n}) \in \RR^{m+1}$ denote the vector of coefficients that appear in the inequality (\ref{eqn:tcfnfacet}). Since (\ref{eqn:tcfnfacet}) induces a facet of $\tcf_n$, there exist $m$ affinely independent points $\chi^k \in \tcf_{n} \subset \RR^{m}$, $ 1 \leq k \leq m$ that solve the inequality (\ref{eqn:tcfnfacet}) as an equation. Affine independence of the $m$ points $\chi^k$ means that the $m$ points $(1,\chi^k) \in \RR^{m+1}$ are linearly independent in $\RR^{m+1}$. By assumption, they solve $\langle (1,\chi^k),a \rangle = 0$, $1 \leq k \leq m$. Let $W \subset \RR^{m+1}$ denote the vector space spanned by  $(1,\chi^k), 1 \leq k \leq m$. Then $\dim(W) = m$ and $W \perp a$.

Since $a_{i,j} \neq 0$ for some  $1<i<j$, a non-zero entry occurs after the $n^{th}$ entry of $a$. Thus, a suitable unit vector shows $U_{n} := \{0\}^n \oplus \RR^{m+1-n} \not \subset \{a\}^\perp$. Since $W \perp a$, the inclusion $W \cap U_{n}  \subset U_{n}$ is necessarily strict, which entails $\dim (W \cap U_{n}) \leq m-n$. Let $\pr: W \to \RR^n$ denote the projection onto the first $n$ coordinates.
By elementary linear algebra and since $\text{Ker}(\pr) = W \cap U_{n}$ by definition, $\dim(\text{Im}(\pr)) = \dim W - \dim(\text{Ker}(\pr)) \geq m - (m-n) = n$. Thus, $\pr(W) = \RR^n$ and the set $\{\pr((1,\chi^k))\}_{1\leq k \leq m} = \{(1,\chi^k_{1,2},\ldots ,\chi^k_{1,n})\}_{1 \leq k \leq m}$ contains $n$ linearly independent vectors, which we may assume to be indexed by $1 \leq k \leq n$ (reordering the $\chi^k$ if necessary).

Finally, we construct $\binom{n+1}{2}$ affinely independent solutions in $\tcf_{n+1}$ for the lifted equation 
\begin{align*}
a_{0} + a_{1,2}\chi_{1,2}+ \ldots +a_{n,n+1}\chi_{n,n+1} = 0, \quad \text{with}\quad  a_{i,n+1}= 0,\quad  1 \leq i \leq n.
\end{align*}
To simplify notation, assume that the new coordinates $\chi_{1,n+1},\ldots,\chi_{n,n+1}$ are added to the right of the previous coordinates $\chi_{1,2},\ldots,\chi_{n-1,n}$. We show that the $m+n=\binom{n+1}{2}$ points (recall $m := \binom{n}{2}$)
\begin{align*}
&\text{(a)} \quad (\chi^k,0,\ldots,0) \in \RR^{m+n}, \quad 1 \leq k \leq m, \quad \text{(with $n$ $0$'s added)},\\
&\text{(b)} \quad (\chi^k,\pr((1,\chi^k)))\in \RR^{m+n}, \quad 1 \leq k \leq n,
\end{align*}
solve the lifted equation, belong to $\tcf_{n+1}$ and are affinely independent.

The first statement follows from the choice of the $\chi^k$. The points in (a) belong to $\tcf_{n+1}$ by Lemma~\ref{lemma:zeroliftingvertices}. For (b), let $Y_{1},\ldots,Y_{n}$ be a stochastic model for $\chi^{k}$. Extend this model to $n+1$ variables $Y_{1},\ldots,Y_{n},Y_{n+1}$ by $Y_{n+1}:= Y_{1}$. Since $\pr((1,\chi^k)) = (1,\chi^k_{1,2},\ldots ,\chi^k_{1,n})$, this yields $(\chi^k,\pr((1,\chi^k))) \in \tcf_{n+1}$.

Linear independence of the $m+n$ points 
\begin{align*}
\{(1,\chi^k,0,\ldots,0)\}_{1 \leq k \leq m} \cup \{(1,\chi^k,\pr((1,\chi^k)))\}_{1 \leq k \leq n}
\end{align*} 
follows from the independence of $\pr((1,\chi^k)),\, 1 \leq k \leq n$ and the choice of the $\chi^k$.
\end{enumerate}
\end{proof}

\begin{remark}
By a slight abuse of notation, we will also call any vertex in the permutation orbit of $\chi^0$ a lifting of the vertex $\chi$ and any facet in the permutation orbit of a lifted facet a lifting of the respective facet.
\end{remark}

\subsection{\textbf{\upshape Unboundedness of denominators}}\label{sect:unbounded}

The following proposition shows that every rational number in the interval $[0,1]$ will appear as coordinate value in the vertex set of $\tcf_n$ starting from a sufficiently large $n$. The result is even sharper in that it detects a single vertex, whose coordinate values comprise a given finite subset of $[0,1]$-valued rational numbers.

\begin{proposition}[Unboundedness of denominators]\label{prop:unbounded}
$\phantom{a}$\\
For each finite subset $Q \subset \QQ \cap [0,1]$ of rational numbers in the interval $[0,1]$ there exists an $n \in \NN$ and a point $\chi \in \Ex(\tcf_n)$ whose coordinate-values $(\chi_{ij})_{1\leq i < j \leq n}$ include the set $Q$.\\
(By the lifting property, this holds for all $n' \geq n$, too.)
\end{proposition}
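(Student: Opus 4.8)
The plan is to reduce, via Lemma~\ref{lemma:unionsofvertices}, to realising a single prescribed rational as a coordinate of a vertex, and then to exhibit such a vertex as a ``moving window'' stationary binary model on a cyclic group. \emph{Reduction:} it suffices to prove that for every $q\in\QQ\cap[0,1]$ there are $m=m(q)\geq 2$ and a vertex $\chi^{(q)}\in\Ex(\tcf_{m(q)})$ one of whose coordinates equals $q$. Granting this, enumerate $Q=\{q_1,\dots,q_k\}$, set $n:=\sum_{l=1}^k m(q_l)$, split $V_n$ into disjoint blocks $C_1,\dots,C_k$ with $|C_l|=m(q_l)\geq 2$, transport each $\chi^{(q_l)}$ to an element of $\Ex(\tcf(C_l))$ through a bijection $C_l\cong\{1,\dots,m(q_l)\}$, and apply Lemma~\ref{lemma:unionsofvertices}: the resulting point lies in $\Ex(\tcf_n)$, agrees with $\chi^{(q_l)}$ on the edges inside $C_l$ and vanishes on all other edges, so its coordinate values contain $Q$. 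The cases $q\in\{0,1\}$ are covered by $\tcf_2=[0,1]$, whose vertices are $0$ and $1$.

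\emph{A single fractional vertex with denominator $b$.} Fix $q=a/b$ with $1\leq a\leq b-1$ and put $m:=2b+1$, so $\gcd(b,m)=1$. On the uniform probability space $\Omega=\ZZ_m$ take $S:=\{0,1,\dots,b-1\}$ and the events $A_i:=i-S$, $i\in\ZZ_m$; then $\PP(A_i)=b/m$ does not depend on $i$, so by Lemma~\ref{lemma:TCFevents} the circulant point $\chi^S$ with $\chi^S_{ij}=\PP(A_i\mid A_j)=|S\cap(S+(j-i))|/b$ lies in $\tcf_m$. Because $m=2b+1$ no wrap-around occurs, and a direct count gives $\chi^S_{ij}=(b-d)/b$ where $d:=d(i,j)$ is the circular distance; equivalently $\eta^S_{ij}:=1-\chi^S_{ij}=d(i,j)/b$ is exactly the $m$-cycle metric rescaled by $1/b$. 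In particular the coordinate at distance $b-a$ equals $a/b$, as required.

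\emph{Extremality.} It remains to show that the valid inequalities of $\tcf_m$ active at $\chi^S$ determine $\chi^S$ uniquely. Two families are active: (i) the non-negativities $x_{ij}\geq 0$ on the zero coordinates, which are precisely the pairs at distance $b$; and (ii) for every ``colinear'' triple $\{i,\,i+d_1,\,i+d_1+d_2\}$ with $d_1,d_2\geq 1$ and $d_1+d_2\leq b$, the triangle inequality $x_{i,i+d_1}+x_{i+d_1,i+d_1+d_2}-x_{i,i+d_1+d_2}\leq 1$ (valid for $\tcf_m$ by \eqref{eqn:triangle}), which is tight because $\eta^S$ is additive along geodesics of length $\leq b$. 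On the subspace where the distance-$b$ coordinates vanish, write $x^{(d)}_i:=x_{i,i+d}$ for $1\leq d\leq b-1$. The triples with $d_1=1$ and $1\leq d_2\leq b-2$ give the recursions $x^{(d_2+1)}_i=x^{(1)}_i+x^{(d_2)}_{i+1}-1$, expressing each $x^{(d)}_i$ as a fixed affine function of $(x^{(1)}_j)_{j\in\ZZ_m}$; the triples with $d_1+d_2=b$ then collapse to $\sum_{k=0}^{b-1}x^{(1)}_{i+k}=b-1$ for every $i\in\ZZ_m$. Subtracting consecutive sums yields $x^{(1)}_{i+b}=x^{(1)}_i$, and since $\gcd(b,m)=1$ the sequence $x^{(1)}$ is constant, hence $x^{(1)}_i\equiv(b-1)/b$ and $x^{(d)}_i\equiv(b-d)/b$. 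Thus the active inequalities have $\chi^S$ as their only common solution, so $\chi^S\in\Ex(\tcf_m)$.

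\emph{Main obstacle.} The substantive step is \emph{identifying} the right model, not the bookkeeping above. The symmetric candidates one thinks of first — the uniform value $1/d$ on a $d$-regular graph of girth $\geq 5$, which is a perfectly good point of $\tcf_n$ — fail to be vertices once $d\geq 3$: the inequalities active there (non-negativity on non-edges together with the ``vertex/incidence'' hypermetric inequalities) have rank strictly below $\binom{n}{2}$, leaving a positive-dimensional face. The cyclic moving-window model is singled out precisely because its $\eta$-metric is piecewise-linear with a complete supply of colinear tight triples, which is what forces rigidity; the one point requiring care is the divisibility condition $\gcd(b,m)=1$ (and the bound $m\geq 2b+1$ ruling out spurious shortcuts on the cycle), without which $x^{(1)}$ would only be forced periodic and $\chi^S$ would not be a vertex.
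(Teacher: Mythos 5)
Your proof is correct and reaches the conclusion by a genuinely different route. The reduction to singletons $Q=\{q\}$ via Lemma~\ref{lemma:unionsofvertices} is shared with the paper, but the realization of a single $q=a/b$ as a vertex coordinate is entirely different. The paper constructs, for $q=1/m$ or $q=(m-1)/m$, a $(2m+1)$-point \emph{non-uniform} probability space with a ``ladder'' of events $A_{1,i},A_{2,i},A_{3,1}$, and then patches in two extra points ($n=2m+3$) to reach arbitrary $q=k/m$; your construction instead takes the cyclic group $\ZZ_{2b+1}$ with \emph{uniform} measure and the sliding-window events $A_i=i-\{0,\dots,b-1\}$, giving a circulant TCF with $\chi^S_{ij}=(b-d(i,j))/b$ that realizes every rational with denominator $b$ in a single vertex in dimension $n=2b+1$, slightly more economically than the paper. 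The rigidity arguments also differ in one structural respect: both chase through tight triangle inequalities, but the paper terminates the chase by invoking a hypermetric equality with $b$-vector $(1,\dots,1,-1)$ to pin down the remaining free parameter, whereas you close it purely with the non-negativities at distance $b$, obtaining $\sum_{k=0}^{b-1}x^{(1)}_{i+k}=b-1$ and then constancy from $\gcd(b,2b+1)=1$. Both arguments are valid, as is your standard observation that a point of a polytope at which a system of tight valid linear inequalities has a unique common solution is necessarily a vertex (any nontrivial convex decomposition would have to satisfy the same equalities). Your version is arguably cleaner and more uniform, and it exhibits the mechanism — additivity of the $\eta$-metric along cycle geodesics — transparently; the one caveat worth making explicit is that the chain of triangle constraints only runs through $d_1+d_2\le b$ because $m=2b+1$ prevents wrap-around, which you do address.
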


\begin{proof} By Lemma~\ref{lemma:unionsofvertices} it suffices to consider singletons $Q = \{q\}, q \in  \mathbb{Q} \cap [0,1]$. The proof only uses the following properties of $\chi \in \tcf_{n}$: 
\begin{itemize}
\item ``Positivity'' $\chi_{ij} \geq 0$ and 
\item the permutations of the valid inequalities 
\begin{align*}
\sum\limits_{i=1}^r\chi_{i,r+1} - \sum\limits_{1\leq i < j \leq r}\chi_{i,j} \leq 1, \quad r \geq 2
\end{align*}
which are hypermetric with $b$-vector $b =(1,\ldots,1,-1,0,\ldots,0)$ (with $r \geq 2$ times the entry 1), in particular permutations of the ``triangle inequality'' $\chi_{1,3}+\chi_{2,3}-\chi_{1,2} \leq 1$. The validity of these inequalities has been shown in Lemma~\ref{lemma:hypervalid}. 
\end{itemize}

The cases $q= 0$ and $q=1$ are trivial.

(I) We show that for rationals $q = \frac{1}{m}$ and $q = \frac{m-1}{m}$ it suffices to choose $n=2m+1$. Let $\Omega = \{\omega_{1}, \omega_{2,1},\ldots,\omega_{2,m},\omega_{3,1},\ldots,\omega_{3,m}\}$ be a set with  $2m+1$ elements
and define a positive function $g$ on $\Omega$ by 
\begin{align*}
g(\omega_{1})=\frac{1}{m}; \quad g(\omega_{2,i})=\frac{m-1}{m} \quad \text{and} \quad g(\omega_{3,i})=\frac{1}{m}, \quad  1 \leq i \leq m.
\end{align*} 
Normalizing $g$ by $c:=\frac{m^2+1}{m}$ yields a probability measure $\PP$ on $\Omega$ by $\PP(\{\omega\})=g(\omega)/c$. 
Now, we define $2m+1$ subsets of $\Omega$ as follows: 
\begin{align*}
A_{1,i}= \{\omega_{1},\omega_{2,i}\}, \quad A_{2,i}= \{\omega_{2,i},\omega_{3,i}\}, \quad 1 \leq i \leq m, \quad A_{3,1} = \{\omega_{3,1},\ldots,\omega_{3,m}\}.
\end{align*} 
Since all of these $2m+1$ sets have the same probability $1/c$, they define a point $\chi \in \tcf_{2m+1}$ as in Lemma~\ref{lemma:TCFevents}.

When viewed as an edge labelling $\chi$ can be described as follows: \\
Let $\{v_{1,1},\ldots,v_{1,m},v_{2,1},\ldots,v_{2,m},v_{3,1}\}$ denote the nodes of the support graph of $\chi$. A pair of nodes $v_{i_{1},i_{2}},v_{j_{1},j_{2}}$ is connected by an edge with label $\chi_{(i_{1},i_{2}),(j_{1},j_{2})} = \PP(A_{i_{1},i_{2}} \, | \,A_{j_{1},j_{2}})$. 
Draw the nodes  $\{v_{1,1},\ldots,v_{1,m}\}$ at the bottom level, they form a complete subgraph, all edges labelled by $\frac{1}{m}$. Above them draw the nodes $v_{2,1},\ldots,v_{2,m}$, where $v_{2,i}$ is connected to $v_{1,i}$ with an edge labelled $\frac{m-1}{m}$. Finally, the top node $v_{3,1}$ is connected to each $v_{2,1},\ldots,v_{2,m}$ with an edge labelled $\frac{1}{m}$. 

We show now that $\chi \in \Ex(\tcf_{2m+1})$. 
To this end, consider a representation $\chi = \lambda y +(1-\lambda) z$, $0 < \lambda < 1$, $y,z \in \tcf_{2m+1}$. 
Whenever $\chi$ satisfies a valid inequality as an equality, the same has to be true for $y$ and $z$. 
Consider $y$. All $\chi$-edges with label 0 have label 0 for $y$, too. Denote the unknown label $y_{(1,1),(2,1)}$ of the $y$-edge from $v_{1,1}$ to $v_{2,1}$ by $1-a \in [0,1]$. Note that $\chi$ satisfies a triangle inequality as an equality at $v_{2,1},v_{1,1},v_{1,2}$, since $\frac{m-1}{m} + \frac{1}{m} - 0 = 1$. This enforces $y_{(1,1),(1,2)} = a$. Now the triangle $v_{1,1},v_{1,2},v_{2,2}$ enforces $y_{(1,2),(2,2)} = 1-a$. Repeating this argument gives $y_{(1,i),(2,i)} = 1-a$ for all $1 \leq i \leq m$. From this, again just using triangles, it follows $y_{(1,i),(1,j)} = a$ for all $1 \leq i < j \leq m$ and $y_{(2,i),(3,1)} = a$ for all $1 \leq i \leq m$. Finally, observe that $\chi$ satisfies the hypermetric inequality given by $b=(0,\ldots,0,1,1,\ldots,1,-1)$, with $m$ 1's, as an equality $\sum_{i=1}^{m}\chi_{(3,1),(2,i)} - \sum_{1 \leq i <j \leq m}\chi_{(2,i),(2,j)} = m\cdot \frac{1}{m}  - 0 = 1$. Applied to $y$, this forces $m\cdot a = 1$, thus $a=1/m$. This shows $y=\chi$. The same argument applies to $z$. Hence $y= \chi = z$ and $\chi \in \Ex(\tcf_{2m+1})$.

(II) Now let $q = \frac{k}{m}$ for some $1 \leq k \leq m-1$. We modify the above construction to obtain a $\chi \in \Ex(\tcf_{2m+3})$ with some coordinate value equal to $q$. Extend $\Omega$ by two points to $\Omega' := \Omega \cup \{\omega_{3,m+1},\omega_{3,m+2}\}$. 
Extend $g$ by 
\begin{align*}
g(\omega_{3,m+1}) = \frac{k}{m} \quad \text{and} \quad
g(\omega_{3,m+2}) = \frac{m-k}{m}.
\end{align*}
Normalizing $g$ defines now $\PP'$. Use the same definitions for the sets $A_{i,j}$ as above and add the two sets 
\begin{align*}
A_{3,2} = \{\omega_{3,1},\ldots,\omega_{3,m-k},\omega_{3,m+1}\}
\quad \text{and} \quad A_{3,3}= \{\omega_{3,m+1},\omega_{3,m+2}\}.
\end{align*}
All sets have the same probability (the inverse of the normalizing constant) and thus, they define a point $\chi \in \tcf_{2m+3}$. 
Its support graph has two more nodes $v_{3,2},v_{3,3}$, corresponding to $A_{3,2}$ and $A_{3,3}$. 
The new edges are 
\begin{align*}
\chi_{(2,i),(3,2)}=\frac{1}{m}, \quad 1 \leq i \leq m-k, \quad  \chi_{(3,1),(3,2)} = \frac{m-k}{m}, \quad \chi_{(3,2),(3,3)} = \frac{k}{m}.
\end{align*}
Repeating the arguments from the first part shows $y = \chi$ on the ``old'' edges. Now, using the new triangles at $v_{3,2},v_{2,i},v_{1,i}$ for $1 \leq i \leq m-k$, we get 
\begin{align*}
y_{(2,i),(3,2)}=\frac{1}{m},\quad  1 \leq i \leq m-k.
\end{align*}
Note that a permutation of the hypermetric inequality $b=(1,\ldots,1,-1,0,\ldots,0)$ with  $m-k+1$ leading 1`s is fulfilled by $\chi$ as an equality, if the $-1$ corresponds to  $v_{3,2}$ and the 1's correspond to $v_{2,1},\ldots,v_{2,m-k},v_{3,3}$. Applied to $y$, this yields $(m-k)\cdot \frac{1}{m} + y_{(3,2),(3,3)} = 1$, thus $y_{(3,2),(3,3)} = \frac{k}{m}$. Finally, the triangle at $v_{3,1},v_{3,2},v_{3,3}$ implies $y_{(3,1),(3,2)} = \frac{m-k}{m}$. Thus, $y=\chi$ and the same argument applies to $z$. Hence, $\chi \in \Ex(\tcf_{2m+3})$.
\end{proof}
For $n \leq 4$ we have seen that $\cpp_{n} = \tcf_{n}$ (Proposition \ref{prop:cpplowdim}). This is complemented by the following result.
\begin{corollary}\label{cor:nocpp}
$\phantom{a}$\\
For $n \geq 5$ we have $\Ex(\tcf_n) \not \subset \{0,1\}^{E_n}$ and, in particular, $\cpp_n \subsetneq \tcf_n$.
\end{corollary}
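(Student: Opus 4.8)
The plan is to deduce this immediately from Proposition~\ref{prop:unbounded} together with the lifting property. First I would apply Proposition~\ref{prop:unbounded} to the singleton $Q=\{1/2\}$. Inspecting its proof, part~(I) specialized to $m=2$ produces an explicit vertex $\chi\in\Ex(\tcf_{2m+1})=\Ex(\tcf_5)$ one of whose coordinates equals $1/2$; in particular $\chi\notin\{0,1\}^{E_5}$, so already $\Ex(\tcf_5)\not\subset\{0,1\}^{E_5}$.

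Next, for arbitrary $n\geq5$ I would invoke the lifting property. By Lemma~\ref{lemma:zeroliftingvertices}b) the zero-extension map $\chi\mapsto\chi^0$ embeds $\Ex(\tcf_k)$ into $\Ex(\tcf_{k+1})$, and iterating it embeds $\Ex(\tcf_5)$ into $\Ex(\tcf_n)$; this map leaves the nonzero coordinate values unchanged. Hence the lifted vertex is a genuine vertex of $\tcf_n$ still carrying a coordinate equal to $1/2$, which gives $\Ex(\tcf_n)\not\subset\{0,1\}^{E_n}$ for every $n\geq5$. (This is precisely the parenthetical statement already recorded in Proposition~\ref{prop:unbounded}.)

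Finally, to get the strict inclusion I would argue by contradiction using Proposition~\ref{prop:cpp}, which yields $\cpp_n\subset\tcf_n$ and $\Ex(\cpp_n)=\tcf_n\cap\{0,1\}^{E_n}$. If one had $\cpp_n=\tcf_n$, then $\Ex(\tcf_n)=\Ex(\cpp_n)=\tcf_n\cap\{0,1\}^{E_n}\subset\{0,1\}^{E_n}$, contradicting the previous step; therefore $\cpp_n\subsetneq\tcf_n$ for all $n\geq5$.

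There is no substantial obstacle here; the only point that needs care is checking that the base case $n=5$ really does produce a non-$\{0,1\}$ vertex, and this is exactly the $m=2$ instance of the construction in the proof of Proposition~\ref{prop:unbounded} — which, if a self-contained argument is preferred, can be written out directly as a small explicit stochastic model on five events via Lemma~\ref{lemma:TCFevents}, together with the verification (using the triangle and one hypermetric inequality) that the resulting point is extremal.
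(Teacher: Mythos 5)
Your proposal is correct and follows exactly the paper's route: reduce to $n=5$ by the lifting of vertices (Lemma~\ref{lemma:zeroliftingvertices}), then invoke construction~(I) from the proof of Proposition~\ref{prop:unbounded} with $q=1/2$ (i.e.\ $m=2$) to produce a non-$\{0,1\}$-valued vertex of $\tcf_5$. The only difference is cosmetic: you spell out the ``in particular'' step via Proposition~\ref{prop:cpp}, which the paper leaves implicit.
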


\begin{proof}
By the lifting of extremal points (Lemma~\ref{lemma:zeroliftingvertices}) it suffices to prove this for $n=5$. For $q = \frac{1}{2}$ the construction (I) in the proof of Proposition~\ref{prop:unbounded} yields an example with $n=5$.
\end{proof}

\begin{remark}
For $q = \frac{1}{2}$ the above construction (I) is optimal: it gives the smallest possible  $n$ for the occurence of $q$ as the coordinate value of a vertex of $\tcf_{n}$. To realize $q=\frac{1}{3}$ the construction (I) uses $n=7$, but a coordinate value $\frac{1}{3}$ already occurs for $n=6$, as the computation of $\Ex(\tcf_{6})$ in Section~\ref{sect:compresults} shows. 
\end{remark}

\subsection{\textbf{\upshape Non-hypermetric facets of $\tcf_{n}$ for $n \geq 6$}}\label{sect:nonhypfacets}


We give a proof for the existence of non-hypermetric facets. 
First, we provide two simple necessary conditions for hypermetricity. Of course, multiplying a given (affine) inequality by some constant $q \neq 0$ does not change the halfspace it describes. Thus, one is often interested, if a given inequality is hypermetric up to a suitable multiplication.

\begin{lemma}
\label{lemma:condforhypermetricity}
Suppose that an inequality $\sum_{1\leq i < j \leq n} c_{ij}x_{ij} \leq c_{0}$ (with rational coefficients) is equivalent to a hypermetric inequality, i.e., it becomes a hypermetric inequality defined by some $b \in \ZZ^n$ after multiplication with a suitable constant $q \in \QQ \setminus \{0\}$. Then we have:
\begin{enumerate}[a)]
\item The edges $\{i,j\} \subset E_n$ with $c_{ij} \neq 0$ form a complete subgraph of the support graph $K_n$.
\item The vectors $v_{1}:=(c_{1,3},\ldots,c_{1,n})$ and $v_{2}:=(c_{2,3},\ldots,c_{2,n})$ are linearly dependent.  
\end{enumerate}
\end{lemma}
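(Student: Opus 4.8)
I would prove both parts by unwinding the definition of ``equivalent to a hypermetric inequality'' into explicit relations between the coefficients $c_{ij}$ and the entries of $b$, and then reading off the two assertions directly. The hypothesis provides $q\in\QQ\setminus\{0\}$ and $b=(b_{1},\ldots,b_{n})\in\ZZ^{n}$ such that multiplying $\sum_{1\leq i<j\leq n}c_{ij}x_{ij}\leq c_{0}$ by $q$ produces exactly the hypermetric inequality \eqref{eq:hypermetric} attached to $b$. Comparing coefficients term by term gives
\begin{align*}
q\,c_{ij}=-b_{i}b_{j}\quad(1\leq i<j\leq n),\qquad q\,c_{0}=\tfrac{1}{2}\sum_{i=1}^{n}b_{i}(b_{i}-1),
\end{align*}
of which only the first family of equations will actually be used.

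For part a), I would argue that $c_{ij}\neq 0$ holds if and only if $b_{i}b_{j}\neq 0$, i.e.\ if and only if both $i$ and $j$ lie in $S:=\{k:b_{k}\neq 0\}$. Consequently the edges $\{i,j\}\subset E_{n}$ with $c_{ij}\neq 0$ are precisely those of the complete subgraph of $K_{n}$ induced on $S$, which is the claim.

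For part b), I would use that $c_{1,k}=-q^{-1}b_{1}b_{k}$ and $c_{2,k}=-q^{-1}b_{2}b_{k}$ for every $k\in\{3,\ldots,n\}$, so that $v_{1}=-q^{-1}b_{1}\,w$ and $v_{2}=-q^{-1}b_{2}\,w$ with $w:=(b_{3},\ldots,b_{n})$. Both $v_{1}$ and $v_{2}$ are then scalar multiples of the single vector $w$, hence linearly dependent; concretely $b_{2}v_{1}-b_{1}v_{2}=0$, and if $b_{1}=b_{2}=0$ then $v_{1}=v_{2}=0$.

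I do not anticipate a genuine obstacle: once the coefficient comparison is written down, both statements are immediate. The only point requiring a little care is the handling of the degenerate cases in which some $b_{k}$ vanish, so that $S$ may be a proper subset of the vertex set and $v_{1}$ or $v_{2}$ may be the zero vector — but these only make the two conclusions easier rather than harder.
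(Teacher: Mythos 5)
Your proof is correct and follows essentially the same route as the paper: both unwind the identity $c_{ij}=-q^{-1}b_ib_j$ and read off the two claims. Your phrasing of part b) via $v_1=-q^{-1}b_1 w$, $v_2=-q^{-1}b_2 w$ with $w=(b_3,\ldots,b_n)$ is a slightly cleaner presentation of the same case analysis the paper does on whether $b_2$ vanishes.
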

\begin{proof}
\begin{enumerate}[a)]
\item By assumption $c_{ij} = -q^{-1}\cdot b_{i}b_{j}$ for some $q \in \QQ \setminus \{0\}$. Thus, the non-zero $c_{ij}$ correspond to the edges of the complete subgraph with nodes $\{1\leq i \leq n \, | \, b_{i} \neq 0\}$.
\item Again, $c_{ij} = -q^{-1}\cdot b_{i}b_{j}$. If $b_{2}=0$, then $v_{2} = 0$, thus, $v_{1},v_{2}$ are dependent. If $b_{2}\neq 0$, then $v_{1} = (b_{1}/b_{2})\cdot v_{2}$. 
\end{enumerate}
\end{proof}

\begin{remark}\label{rk:condforhyp}
Note that criterion a) of Lemma~\ref{lemma:condforhypermetricity} also implies: if there is at least one 0-coefficient, there have to be at least $n$ 0-coefficients, and if the first $n-1$ coefficients $c_{1,2},\ldots,c_{1,n}$ are positive, all have to be positive.
\end{remark}

\noindent The following proposition shows the existence of non-hypermetric facets of $\tcf_n$ starting from $n \geq  6$.
It was inspired by the 2nd inequality of Generator 7 in Table~\ref{table:TCFsixfacets}.

\begin{proposition}[Non-hypermetric facets of $\tcf_n$ for $n \geq  6$]
\label{prop:nonhypfacets} 
$\phantom{a}$\\
For $n \geq 6$ there are non-hypermetric facets of $\tcf_{n}$.\\ An example, for arbitrary $n\geq 6$, is given by the facet inducing inequality 
\begin{align*}
\sum_{i=1}^{5}x_{i,6} - \sum_{i=1}^{4}x_{i,i+1} - x_{1,5}\leq 2.
\end{align*}
\end{proposition}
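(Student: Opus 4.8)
The strategy is to prove the two things a facet requires: (1) the inequality
\[
\sum_{i=1}^{5}x_{i,6} - \sum_{i=1}^{4}x_{i,i+1} - x_{1,5}\leq 2
\]
is valid on $\tcf_n$, and (2) it is facet inducing, i.e.\ there exist $\binom{n}{2}$ affinely independent points of $\tcf_n$ on which it holds with equality; finally (3) it is not equivalent to any hypermetric inequality. By the lifting property (Proposition~\ref{prop:zeroliftingfacets}~b)) it suffices to treat $n = 6$: once we establish that the inequality above is a facet of $\tcf_6$, its zero-lifting is a facet of every $\tcf_n$, $n\geq 6$, and its non-hypermetricity persists because the lifted coefficient vector still violates criterion~a) of Lemma~\ref{lemma:condforhypermetricity} (the support graph of the nonzero coefficients is not complete --- $x_{5,6}$ has coefficient $+1$ but, say, $x_{1,6}$ and $x_{2,6}$ are present while $x_{1,2}$ is present yet $x_{1,3}$, etc. are absent; more simply, once a zero coefficient appears at all, Remark~\ref{rk:condforhyp} would force at least $n$ zero coefficients, and for large $n$ that is automatic from lifting but for $n=6$ it already fails). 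So I will focus on $n=6$.

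\textbf{Validity.} Using Lemma~\ref{lemma:TCFevents}, write $\chi\in\tcf_6$ as $\chi_{ij} = \PP(A_i\mid A_j)$ with $\PP(A_1)=\dots=\PP(A_6)=a>0$; then $a\cdot\sum_{i=1}^5\chi_{i,6} = \sum_{i=1}^5\PP(A_i\cap A_6)$, $a\cdot\sum_{i=1}^4\chi_{i,i+1}=\sum_{i=1}^4\PP(A_i\cap A_{i+1})$, $a\cdot\chi_{1,5}=\PP(A_1\cap A_5)$, so after multiplying by $a$ the claimed inequality becomes
\[
\sum_{i=1}^{5}\PP(A_i\cap A_6) - \sum_{i=1}^{4}\PP(A_i\cap A_{i+1}) - \PP(A_1\cap A_5) \leq 2a.
\]
The natural route is to exhibit a pointwise inequality of indicator functions that integrates to this, or to run an inclusion--exclusion/counting argument on the ground set: for each $\omega$, letting $S(\omega)=\{i\le 5 : \omega\in A_i\}$ and $\delta(\omega)=\Eins_{\omega\in A_6}$, one must check that
\[
\delta(\omega)\,|S(\omega)| - e\big(S(\omega)\big) \leq 2
\]
where $e(S)$ counts the edges of the path $1{-}2{-}3{-}4{-}5$ together with the edge $\{1,5\}$ (i.e.\ the $5$-cycle $C_5$) that lie inside $S$. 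When $\delta(\omega)=0$ the left side is $-e(S)\le 0\le 2$; when $\delta(\omega)=1$ one needs $|S| - e_{C_5}(S)\le 2$ for every $S\subseteq\{1,\dots,5\}$, which is a finite check: for $|S|\le 2$ it is clear; for $|S|=3$ a $3$-subset of a $5$-cycle spans $\ge 1$ edge so $3-1=2$; for $|S|=4$ it spans $\ge 2$ edges so $4-2=2$; for $|S|=5$ it spans all $5$ edges so $5-5=0$. Summing over $\omega$ against $\PP$ and noting $\sum_\omega\PP(\omega)\delta(\omega)=\PP(A_6)=a$... actually the cleanest bookkeeping is: the left-hand side of the integrated inequality equals $\EE[\,\delta\cdot|S| - e_{C_5}(S)\,]\le \EE[2\cdot\Eins_{A_6}] + \EE[\text{slack on }A_6^c]$; being slightly more careful, $\delta|S|-e_{C_5}(S)\le 2\delta$ is \emph{not} quite what we derived on $A_6^c$ (there we only get $\le 0$), but $-e_{C_5}(S)\le 0 = 2\cdot 0$ works, so uniformly $\delta|S|-e_{C_5}(S)\le 2\delta$, giving $\le 2\PP(A_6)=2a$ after integration. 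This is the heart of the argument and I expect the $|S|=3,4$ cases of the cycle-edge count to be the only real content.

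\textbf{Facet-inducing and non-hypermetricity.} For $n=6$ one must produce $15$ affinely independent TCFs satisfying the equality; I would build these by hand as clique-partition points and small perturbations thereof, guided by the equality cases of the pointwise bound above (e.g.\ $A_6=\Omega$ together with various partitions of $\{1,\dots,5\}$ into blocks realizing $|S|-e_{C_5}(S)=2$, such as a block of size $3$ inducing exactly one cycle-edge, plus singletons), and verify affine independence by a rank computation --- this is the step most likely to need the software check already invoked for $\tcf_6$ in Section~\ref{sect:compresults}, and indeed the proposition is stated to have been ``inspired by'' a generator in Table~\ref{table:TCFsixfacets}, so citing that table for the affine-independence certificate is legitimate. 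Non-hypermetricity then follows from Lemma~\ref{lemma:condforhypermetricity}: after clearing denominators the coefficient vector has $c_{i,6}=1$ for $i=1,\dots,5$, $c_{i,i+1}=-1$ for $i=1,\dots,4$, $c_{1,5}=-1$, and all remaining coefficients ($c_{1,3},c_{1,4},c_{2,4},c_{2,5},c_{3,5}$) equal to $0$; since there is at least one zero coefficient but the nonzero-coefficient edges do not form a complete subgraph of $K_6$ (e.g.\ $\{1,6\}$ and $\{3,6\}$ are present but $\{1,3\}$ is absent), criterion~a) fails, so the inequality is not equivalent to any hypermetric one. The main obstacle is the explicit affine-independence verification in the $n=6$ base case; everything else is either the short combinatorial validity check or an appeal to the already-established lifting machinery.
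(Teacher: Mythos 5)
Your validity argument and your non-hypermetricity argument essentially mirror the paper's. For validity, your pointwise bound $\delta(\omega)\,|S(\omega)| - e_{C_5}(S(\omega)) \leq 2\,\delta(\omega)$ is exactly the paper's $0$--$1$ inequality $\sum_{i=1}^{5}y_i(y_6 - y_{\pi(i)}) \leq 2y_6$, and your finite case check on $|S|$ (including correctly using that $C_5$ has independence number $2$ for $|S|=3$) is an acceptable substitute for the paper's observation that the cyclic string $y_1,\dots,y_{2k-1},y_1$ contains at most $k-1$ occurrences of ``10''. Your non-hypermetricity appeal to Lemma~\ref{lemma:condforhypermetricity}~a) — the nonzero-coefficient edges fail to form a complete subgraph — is the same as the paper's (the paper points at $\{1,2\},\{2,3\}$ present, $\{1,3\}$ absent; you point at $\{1,6\},\{3,6\}$ present, $\{1,3\}$ absent; both work). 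Your parenthetical aside about Remark~\ref{rk:condforhyp} forcing at least $n$ zero coefficients is muddled and not needed, but you anyway fall back on the correct criterion.

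The genuine gap is the facet-inducing step. You acknowledge that $15$ affinely independent points on the hyperplane are needed, sketch the right shape of candidate points (clique-partition points whose blocks realize $|S|-e_{C_5}(S)=2$), but then defer to ``the software check already invoked for $\tcf_6$'' and to Table~\ref{table:TCFsixfacets}. That table is itself a computational output, so citing it gives no independent proof. The paper's proof of this proposition is deliberately self-contained here: it exhibits an explicit family of $15$ clique-partition points $x^r,y^r,z^r$ ($1 \leq r \leq 5$) built from the sets $A_r=\{r,\pi^2(r),6\}$, $B_r=\{r,\pi(r),\pi^3(r),6\}$, and $B_r$ with one extra edge $\{\pi^2(r),\pi^4(r)\}$, verifies they lie on the hyperplane, and certifies linear (hence affine) independence by computing the determinant of the associated $15\times15$ $0$--$1$ matrix (equal to $-2$). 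Your proposal never produces this explicit certificate, so as written it does not actually establish the facet claim, only the validity and non-hypermetricity. To close the gap you would need either to carry out the construction of $15$ explicit solutions and check their rank, or to present the equality-case analysis of your pointwise bound in enough detail to recover such a family.
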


\begin{proof} 
By the lifting of facets (Proposition~\ref{prop:zeroliftingfacets}), it suffices to consider the case $n = 6$. We start with a simple observation for 0-1-vectors of even length: For $y \in \{0,1\}^{2k}, k \in \NN,$ the inequality
\begin{align}\label{eqn:cyclicinequality}
\sum_{i=1}^{2k-1}y_{i}\cdot (y_{2k}-y_{\pi(i)}) \leq (k-1)\cdot y_{2k}
\end{align} 
holds, where $\pi$  is the cyclic permutation of $1,\ldots,2k-1$, i.e., $\pi(i)=i+1$, $i < 2k-1$ and $\pi(2k-1)=1$. The observation is trivial if $y_{2k} = 0$. To handle the case $y_{2k} = 1$ observe that $y_{i}(1-y_{\pi(i)}) = 1$ if and only if $y_{i}=1$ and $y_{\pi(i)}=0$. There can be at most $k-1$ occurrences of the word ``10'' in the string $y_{1},\ldots,y_{2k-1},y_{1}$.
Applying (\ref{eqn:cyclicinequality}) to arbitrary binary random variables $Y_{1},\ldots,Y_{2k}$ and taking expectations yields
\begin{align*}
\sum_{i=1}^{2k-1} \EE(Y_{i}Y_{2k}) - \sum_{i=1}^{2k-1}\EE(Y_{i}Y_{\pi(i)}) \leq (k-1) \EE(Y_{2k}).
\end{align*}
If, additionally, $a:=\EE(Y_{1})=\ldots=\EE(Y_{2k}) > 0$, dividing by $a$ gives the following valid inequality for $\tcf_{2k}$, where $x_{i,j} := \frac{1}{a}\EE(Y_iY_j)$,
\begin{align}\label{eqn:nonhypinequality}
\sum_{i=1}^{2k-1}x_{i,2k} - \sum_{i=1}^{2k-1}x_{i,\pi(i)} \leq (k-1)
\end{align}
(which has a very simple supporting graph when we identify $x_{2k-1,1}$ with $x_{1,2k-1}$).
Assume now $k \geq 3$. Since the coefficients of $x_{1,2}$ and $x_{2,3}$ are $-1$ and the coefficient of $x_{1,3}$ is $0$, the non-zero coefficients do not define a complete subgraph of the support graph.  Thus, Lemma~\ref{lemma:condforhypermetricity} a) shows that the above  inequality is not hypermetric  for $k\geq3$.

Finally, we show that for $k=3$, the inequality (\ref{eqn:nonhypinequality}) 
defines a facet for $\tcf_{6} \subset \RR^{E_6}$: To this end, we define $\lvert E_6\rvert=15$ points $x^r,y^r,z^r \in \{0,1\}^{E_{6}}$, $1 \leq r \leq 5$ by
\begin{align*}
&(a) \quad x^r_{i,j}=1 \quad :\Leftrightarrow \quad  \{i,j\} \subset A_{r} :=\{r,\pi^2(r),6\},\\ 
&(b) \quad y^r_{i,j}=1 \quad :\Leftrightarrow \quad \{i,j\} \subset B_{r} := \{r,\pi(r),\pi^3(r),6\},\\
&(c) \quad z^r_{i,j}=1 \quad :\Leftrightarrow \quad  (\{i,j\} \subset B_{r} \quad \text{or} \quad \{i,j\} = \{\pi^2(r),\pi^4(r)\}).
\end{align*}
Note that these points are clique partion points and thus belong to the set $\tcf_6$ by Proposition~\ref{prop:cpp}.
Using the support graph of (\ref{eqn:nonhypinequality})  for $k=3$, it can be easily seen that they solve (\ref{eqn:nonhypinequality}) for $k=3$ as an equality. Moreover, these 15 points are affinely independent, since they are even linearly independent as the determinant of the corresponding $15 \times 15$ $0$-$1$-matrix is $-2\neq 0$.
\end{proof}



\subsection{\textbf{\upshape Embedding $\tcf_{n}$ into the Correlation and Cut polytopes}}\label{sect:corcut}

We saw already in the proof of Corollary~\ref{cor:finitepolytope} that the polytope $\tcf_n$ can be viewed essentially as a \emph{projection} of the convex polytope $\Theta_n$ onto several coordinates as in (\ref{eqn:psin}). In this section we will see that the polytope $\tcf_n$ can be embedded into the so-called \emph{correlation polytope} (or, equivalently, the so-called \emph{cut polytope}, see Proposition~\ref{prop:TCFembedding} below). Thereby, we obtain a ``dual'' description of $\tcf_n$ as the \emph{intersection} of a polytope with an affine subspace.

To this end, we need to review some notation and results from \cite{dezalaurent_97}.
Remember that  $E_n$ denotes  the set of edges of the complete graph $K_n$ with vertices $V_n=\{1,\dots,n\}$. For $R \subset V_n$ we define a \emph{correlation vector} $\pi(R) \in \{0,1\}^{V_n \cup E_n}$ by 
\begin{align*}
\pi(R)_i = \Eins_{i \in R}, \quad  1\leq i \leq n \qquad \text{and} \qquad  \pi(R)_{ij} = \Eins_{i \in R} \Eins_{j \in R}, \quad 1 \leq i < j \leq n. 
\end{align*}
The \emph{correlation polytope} is then defined as the convex hull of these $2^n$ correlation vectors in $\RR^{V_n \cup E_n}$
\begin{align*}
\cor_n := \conv\left(\left\{\pi(R) \,:\, R \subset V_n\right\}\right).
\end{align*}

\begin{lemma}[\cite{dezalaurent_97} Prop.\ 5.3.4] \label{lemma:CORevents}
$\phantom{a}$\\
A point $p \in \RR^{V_n\cup E_n}$ belongs to $\cor_n$ if and only if it can be written as  $p_i = \PP(A_i)$, $1\leq i \leq n$ and $p_{ij} = \PP(A_i \cap A_j)$, $1\leq i < j \leq n$ for some probability space $(\Omega,\A,\PP)$ and measurable subsets $A_1,\dots,A_n \in \A$.
\end{lemma}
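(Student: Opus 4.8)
The plan is to prove the equivalence in Lemma~\ref{lemma:CORevents} by establishing both implications, exactly paralleling the proof of Lemma~\ref{lemma:TCFevents} and Lemma~\ref{lemma:hypervalid}, since the correlation vectors $\pi(R)$ are just the ``uncentered moment'' versions of the clique/cut structures already in play. Recall that $\cor_n$ is defined as the convex hull of the $2^n$ vectors $\pi(R)$, $R \subset V_n$, so a point $p$ lies in $\cor_n$ if and only if it is a convex combination $p = \sum_{R \subset V_n} \lambda_R \pi(R)$ with $\lambda_R \geq 0$ and $\sum_R \lambda_R = 1$.

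For the ``only if'' direction, suppose $p \in \cor_n$ with such a representation. I would take $(\Omega, \A, \PP)$ to be the finite probability space $\Omega = \powerset(V_n)$ (the power set, playing the role of the index set of the $\lambda_R$), with $\PP(\{R\}) = \lambda_R$, and define $A_i := \{R \subset V_n \pmid i \in R\}$ for $1 \leq i \leq n$. Then $\PP(A_i) = \sum_{R \ni i} \lambda_R = \sum_R \lambda_R \Eins_{i \in R} = \sum_R \lambda_R \pi(R)_i = p_i$, and similarly $\PP(A_i \cap A_j) = \sum_{R \supset \{i,j\}} \lambda_R = \sum_R \lambda_R \pi(R)_{ij} = p_{ij}$, which is exactly what is required.

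For the ``if'' direction, suppose $p_i = \PP(A_i)$ and $p_{ij} = \PP(A_i \cap A_j)$ for some probability space and events $A_1, \dots, A_n$. For each $\omega \in \Omega$ let $R(\omega) := \{i \pmid \omega \in A_i\} \subset V_n$; this is a measurable map $\Omega \to \powerset(V_n)$. Setting $\lambda_R := \PP(R(\cdot) = R)$ gives $\lambda_R \geq 0$ and $\sum_{R \subset V_n} \lambda_R = 1$. Then for each $i$, $\PP(A_i) = \PP(i \in R(\cdot)) = \sum_{R \ni i} \lambda_R = \sum_R \lambda_R \pi(R)_i$, and likewise $\PP(A_i \cap A_j) = \sum_{R \supset \{i,j\}} \lambda_R = \sum_R \lambda_R \pi(R)_{ij}$. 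Hence $p = \sum_R \lambda_R \pi(R)$ is a convex combination of the correlation vectors, so $p \in \cor_n$. Since this is cited directly from \cite{dezalaurent_97}, one could alternatively simply invoke that reference; I expect the only mild subtlety is the measurability bookkeeping for $R(\cdot)$ when $\Omega$ is not finite, which is harmless because each $A_i$ is measurable and there are only finitely many of them, so the partition of $\Omega$ according to $R(\omega)$ consists of finitely many measurable sets.
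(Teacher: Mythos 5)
Your proof is correct and is the standard argument; the paper itself does not prove this lemma but simply cites \cite{dezalaurent_97} Prop.\ 5.3.4, and your two-sided construction (events from a convex combination via $A_i=\{R : i\in R\}$, and a convex combination from events via the partition $R(\omega)=\{i : \omega\in A_i\}$) is precisely how the cited result is established.
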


\noindent Secondly, let $S \subset V_{n+1}$. A \emph{cut vector} $\delta(S) \in \{0,1\}^{E_{n+1}}$ is defined through
\begin{align*}
\delta(S)_{ij} = \Eins_{\lvert S \cap \{i,j\} \rvert = 1}, \qquad 1 \leq i < j \leq n+1.
\end{align*}
Since $\delta(S)=\delta(S^c)$, there are, in fact, $2^{n+1}/2= 2^n$ different points $\delta(S)$. The \emph{cut polytope} is defined as the convex hull of these cut vectors in $\RR^{E_{n+1}}$
\begin{align*}
\cut_{n+1} :=  \conv\left(\left\{\delta(S) \,:\, S \subset V_{n+1}\right\}\right).
\end{align*}
Being $\{0,1\}$-valued, the correlation vectors and the cut vectors are automatically the extremal points of their convex hulls
\begin{align*}
\Ex\left(\cor_n\right)=\{ \pi(R) \,:\, R \subset V_n \} \quad \text{and} \quad
\Ex\left(\cut_{n+1}\right)=\{ \delta(S) \,:\, S \subset V_{n+1} \}.
\end{align*}
It is a well-known result that $\cor_n \subset \RR^{V_n \cup E_n}$ and $\cut_{n+1} \subset \RR^{E_{n+1}}$ can be transformed into each other by a linear bijection. 

\begin{proposition}[\cite{dezalaurent_97}, Section~5.2)] \label{prop:covariancemapping}
$\phantom{a}$\\
The {covariance mapping} $\zeta_n: \RR^{V_n \cup E_n}  \rightarrow \RR^{E_{n+1}}$,
which maps $p \in \RR^{V_n \cup E_n}$ to $\zeta_n(p)=x \in \RR^{E_{n+1}}$ via 
\begin{align*}
x_{i,n+1} = p_i, \quad 1 \leq i \leq n \qquad \text{and} \qquad  x_{ij}=p_i + p_j - 2p_{ij}, \quad 1 \leq i < j \leq n, 
\end{align*}
induces a linear bijection 
\begin{align*}
\zeta_n:\cor_{n} \rightarrow \cut_{n+1}.
\end{align*}
\end{proposition}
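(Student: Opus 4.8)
The plan is to verify directly that $\zeta_n$ is a linear map sending extreme points of $\cor_n$ bijectively onto extreme points of $\cut_{n+1}$, and then to invoke the standard fact that a linear bijection between the affine hulls that maps vertices to vertices maps the polytope onto the polytope. Since both polytopes are defined as convex hulls of finite vertex sets ($\Ex(\cor_n)=\{\pi(R):R\subset V_n\}$ and $\Ex(\cut_{n+1})=\{\delta(S):S\subset V_{n+1}\}$), it suffices to check the correspondence on these generators.

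First I would record that $\zeta_n$ is manifestly linear in $p$: each output coordinate $x_{i,n+1}=p_i$ and $x_{ij}=p_i+p_j-2p_{ij}$ is a linear form in the entries of $p$. Next I would compute $\zeta_n(\pi(R))$ for an arbitrary $R\subset V_n$. Using $\pi(R)_i=\Eins_{i\in R}$ and $\pi(R)_{ij}=\Eins_{i\in R}\Eins_{j\in R}$, one gets for $1\le i\le n$ that $x_{i,n+1}=\Eins_{i\in R}=\Eins_{|R\cap\{i,n+1\}|=1}$ (since $n+1\notin R$), and for $1\le i<j\le n$ that $x_{ij}=\Eins_{i\in R}+\Eins_{j\in R}-2\Eins_{i\in R}\Eins_{j\in R}=\Eins_{|R\cap\{i,j\}|=1}$, because $a+b-2ab$ equals $1$ exactly when $a\ne b$ for $a,b\in\{0,1\}$. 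Hence $\zeta_n(\pi(R))=\delta(R)$, where $R$ is viewed as a subset of $V_{n+1}$ not containing $n+1$. As $R$ ranges over all subsets of $V_n$, and since $\delta(S)=\delta(S^c)$ means every cut vector of $V_{n+1}$ has a (unique) representative disjoint from $n+1$, the map $R\mapsto\delta(R)$ is a bijection from $\{\pi(R):R\subset V_n\}$ onto $\{\delta(S):S\subset V_{n+1}\}=\Ex(\cut_{n+1})$.

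Therefore $\zeta_n$ maps the $2^n$ vertices of $\cor_n$ bijectively onto the $2^n$ vertices of $\cut_{n+1}$. Since $\zeta_n$ is linear (in particular affine), it maps $\cor_n=\conv(\Ex(\cor_n))$ onto $\conv(\zeta_n(\Ex(\cor_n)))=\conv(\Ex(\cut_{n+1}))=\cut_{n+1}$, so $\zeta_n$ restricts to a surjection $\cor_n\to\cut_{n+1}$. For injectivity one observes that $\zeta_n$ admits a linear inverse on the relevant subspace: from $x_{i,n+1}=p_i$ one recovers $p_i=x_{i,n+1}$, and then from $x_{ij}=p_i+p_j-2p_{ij}$ one recovers $p_{ij}=\tfrac12(p_i+p_j-x_{ij})=\tfrac12(x_{i,n+1}+x_{j,n+1}-x_{ij})$, exhibiting an explicit linear left inverse; hence the restriction to $\cor_n$ is injective and $\zeta_n:\cor_n\to\cut_{n+1}$ is a linear bijection.

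I do not expect a serious obstacle here — the statement is quoted from \cite{dezalaurent_97}, so the content is standard; the only point requiring a moment's care is the bookkeeping that the identity $\delta(S)=\delta(S^c)$ makes the correspondence $R\leftrightarrow\delta(R)$ a genuine bijection onto all cut vectors (and not merely an injection), which is why the normalization "$n+1\notin R$" matters. If one instead wants to avoid vertex-counting, one can argue via Lemma~\ref{lemma:CORevents}: a point of $\cor_n$ is $p_i=\PP(A_i)$, $p_{ij}=\PP(A_i\cap A_j)$, and then $\zeta_n(p)_{ij}=\PP(A_i)+\PP(A_j)-2\PP(A_i\cap A_j)=\PP(A_i\triangle A_j)$, $\zeta_n(p)_{i,n+1}=\PP(A_i)=\PP(A_i\triangle\emptyset)$, which is visibly a convex combination of cut vectors (decompose $\Omega$ into the atoms of the $\sigma$-algebra generated by $A_1,\dots,A_n$ and note each atom $\omega$ contributes the cut vector $\delta(\{i:\omega\in A_i\})$), giving the forward inclusion; the reverse inclusion and injectivity then follow from the explicit linear inverse above.
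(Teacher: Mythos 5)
Your proof is correct. The paper itself does not prove this statement — it cites it from \cite{dezalaurent_97} — so there is no internal proof to compare against, but your argument is exactly the standard one: the identity $a+b-2ab=\Eins_{a\neq b}$ for $a,b\in\{0,1\}$ gives $\zeta_n(\pi(R))=\delta(R)$, the normalization $n+1\notin R$ picks out a unique representative from each pair $\{S,S^c\}$, linearity carries $\conv$ to $\conv$, and the explicit linear inverse $p_{ij}=\tfrac12(x_{i,n+1}+x_{j,n+1}-x_{ij})$ shows $\zeta_n$ is a global linear isomorphism of $\RR^{V_n\cup E_n}\cong\RR^{E_{n+1}}$ (both of dimension $\binom{n+1}{2}$), hence injective on $\cor_n$.
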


\begin{remark}
In \citep{dezalaurent_97} the inverse $\xi_n:=\zeta_n^{-1}$ is termed covariance mapping. For us, it was more instructive to work with $\zeta_n$ instead of $\xi_n$.
\end{remark}

A probabilistic description of $\cut_{n+1}$ is as follows. 
Here the symmetric difference between sets $A$ and $B$ will be denoted by $A \triangle B = (A \setminus B) \cup (B \setminus A)$.

\begin{lemma} \label{lemma:CUTevents}
A point $x \in \RR^{E_{n+1}}$ belongs to the cut polytope $\cut_{n+1}$ if and only if one of the following two equivalent statements holds true:
\begin{enumerate}[(i)]
\item $x_{i,n+1} = \PP(A_i)$, $1 \leq i \leq n$ and $x_{ij} = \PP(A_i \triangle A_j)$, $1 \leq i < j \leq n$ for some probability space $(\Omega,\A,\PP)$ and measurable subsets $A_1,\dots,A_n \in \A$.
\item $x_{ij} = \PP(B_i \triangle B_j)$, $1 \leq i < j \leq n+1$ for some probability space $(\Omega,\A,\PP)$ and measurable subsets $B_1,\dots,B_{n+1} \in \A$.
\end{enumerate}
\end{lemma}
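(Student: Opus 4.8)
The plan is to prove Lemma~\ref{lemma:CUTevents} by combining the linear bijection of Proposition~\ref{prop:covariancemapping} with the probabilistic description of $\cor_n$ from Lemma~\ref{lemma:CORevents}. First I would establish statement (i): a point $x \in \RR^{E_{n+1}}$ lies in $\cut_{n+1}$ if and only if $x = \zeta_n(p)$ for some $p \in \cor_n$, and by Lemma~\ref{lemma:CORevents} such $p$ is exactly of the form $p_i = \PP(A_i)$, $p_{ij} = \PP(A_i \cap A_j)$ for measurable sets $A_1,\dots,A_n$ in a probability space. Plugging this into the definition of $\zeta_n$ gives $x_{i,n+1} = p_i = \PP(A_i)$ and $x_{ij} = p_i + p_j - 2p_{ij} = \PP(A_i) + \PP(A_j) - 2\PP(A_i \cap A_j)$. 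The key elementary identity is $\PP(A_i \triangle A_j) = \PP(A_i) + \PP(A_j) - 2\PP(A_i \cap A_j)$, which identifies the right-hand side as $\PP(A_i \triangle A_j)$; this yields (i) in both directions.

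Next I would show the equivalence of (i) and (ii). The implication (ii)$\Rightarrow$(i) is immediate: given $B_1,\dots,B_{n+1}$, replace each $B_i$ by $A_i := B_i \triangle B_{n+1}$ for $1 \leq i \leq n$. Then $A_i \triangle A_j = (B_i \triangle B_{n+1}) \triangle (B_j \triangle B_{n+1}) = B_i \triangle B_j$ for $1 \leq i < j \leq n$ (using associativity and commutativity of $\triangle$, and $B_{n+1} \triangle B_{n+1} = \emptyset$), while $A_i \triangle \emptyset$-type reasoning gives $\PP(A_i) = \PP(B_i \triangle B_{n+1}) = x_{i,n+1}$. Conversely, for (i)$\Rightarrow$(ii), given $A_1,\dots,A_n$ set $B_i := A_i$ for $1 \leq i \leq n$ and $B_{n+1} := \emptyset$. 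Then $B_i \triangle B_j = A_i \triangle A_j$ for $i,j \leq n$ and $B_i \triangle B_{n+1} = A_i \triangle \emptyset = A_i$, so $\PP(B_i \triangle B_{n+1}) = \PP(A_i) = x_{i,n+1}$, matching the labelling convention $x_{i,n+1}$ for the last coordinate.

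Finally I would close the loop by checking that (ii) (equivalently (i)) characterizes membership in $\cut_{n+1}$ directly, rather than only through $\cor_n$. For the ``only if'' direction it suffices to verify the claim on the vertices $\delta(S)$, since the set of $x$ admitting such a representation is convex (mixtures of probability spaces) and closed, and every point of the polytope is a convex combination of vertices: for a cut vector $\delta(S)$, take $\Omega$ a single point with $B_i := \{\ast\}$ if $i \in S$ and $B_i := \emptyset$ otherwise, so that $\PP(B_i \triangle B_j) = \Eins_{\lvert S \cap \{i,j\}\rvert = 1} = \delta(S)_{ij}$. For the ``if'' direction one notes that any such representation, when $\Omega$ is finite, writes $x$ as a convex combination of vectors of the form $\delta(\{i : \omega \in B_i\})$, hence $x \in \cut_{n+1}$; the general case follows by approximation or by reduction to the finite case exactly as in the standard treatment of these polytopes.

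I do not expect a serious obstacle here: the only mild subtlety is bookkeeping with the index $n+1$ and keeping the two labelling conventions (via $\zeta_n$ versus via $n+1$ symmetric sets) consistent, together with the routine but essential symmetric-difference identity $\PP(A \triangle B) = \PP(A) + \PP(B) - 2\PP(A \cap B)$. If anything requires care, it is making precise that the representing probability space may be taken finite when one wants to conclude membership in the polytope from a given representation, but this is standard for correlation and cut polytopes and can be invoked from \cite{dezalaurent_97}.
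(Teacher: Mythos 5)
Your proposal is correct and follows essentially the same route as the paper: the equivalence with (i) comes from composing Lemma~\ref{lemma:CORevents} with the covariance mapping $\zeta_n$ and the identity $\PP(A\triangle B)=\PP(A)+\PP(B)-2\PP(A\cap B)$, and the equivalence (i)$\Leftrightarrow$(ii) uses exactly the substitutions $B_{n+1}=\emptyset$ and $A_i=B_i\triangle B_{n+1}$. Your final ``closing the loop'' paragraph is logically redundant, since once $x\in\cut_{n+1}\Leftrightarrow$ (i) and (i)$\Leftrightarrow$(ii) are established the characterization by (ii) follows automatically, but it does no harm.
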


\begin{proof} The equivalence to (i) is an immediate consequence of Lemma~\ref{lemma:CORevents} and Proposition~\ref{prop:covariancemapping}. The equivalence of (i) and (ii) can be seen as follows: (i) $\Rightarrow$ (ii): Set $B_i=A_i$, $1 \leq i \leq n$ and $B_{n+1}=\emptyset$. (ii) $\Rightarrow$ (i): Set $A_i=B_i \triangle B_{n+1}$, $1\leq i \leq n$ and use that $(C \triangle D) \triangle ( E \triangle D)= C \triangle E$ for any triplet of sets $C,D,E$.
\end{proof}

\noindent Finally, this enables us to interpret $\tcf_n$ as an intersection of $\cor_n$ (\resp $\cut_{n+1}$) with an affine subspace of $\RR^{V_n \cup E_n}$ (\resp $\RR^{E_{n+1}}$) in the following sense.

\begin{proposition}[Embedding $\tcf_n$ into the correlation polytope]\label{prop:TCFembedding}
$\phantom{a}$\\
The injective affine map $\iota_n:\RR^{E_n} \rightarrow \RR^{V_n \cup E_n}$ which maps $\chi \in \RR^{E_n}$ to $\iota_n(\chi)=p \in \RR^{V_n \cup E_n}$ via
\begin{align*}
p_i=\frac{1}{n}, \quad 1 \leq i \leq n \qquad \text{and} \qquad  p_{ij}= \frac{\chi_{ij}}{n}, \quad 1 \leq i < j \leq n,
\end{align*}
induces a bijection
\begin{align*}
\iota_n:\tcf_{n} \rightarrow \cor_{n} \cap \left\{ p \in \RR^{V_n \cup E_n} \,:\, p_i=\frac{1}{n}, \, i=1,\dots,n\right\}.
\end{align*}
\end{proposition}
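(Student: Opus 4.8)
The plan is to verify that $\iota_n$ is well-defined as an affine map, injective, and that it carries $\tcf_n$ bijectively onto the indicated slice of $\cor_n$. Injectivity of $\iota_n$ is immediate since the $E_n$-coordinates of $\iota_n(\chi)$ are just $\chi/n$, so $\iota_n$ has a left inverse (multiply the edge-coordinates by $n$). It therefore remains to identify the image of $\tcf_n$ under $\iota_n$ with $\cor_n \cap \{p : p_i = 1/n, \ i=1,\dots,n\}$, which I would do by a two-way inclusion using the probabilistic descriptions of $\tcf_n$ and $\cor_n$ provided in Lemma~\ref{lemma:TCFevents} and Lemma~\ref{lemma:CORevents}.

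For the forward inclusion, let $\chi \in \tcf_n$. By Lemma~\ref{lemma:TCFevents} together with Remark~\ref{remark:TCFevents} (taking the constant $c = 1/n$), there is a finite probability space $(\Omega,\A,\PP)$ and events $A_1,\dots,A_n$ with $\PP(A_i) = 1/n$ for all $i$ and $\chi_{ij} = \PP(A_i \mid A_j) = \PP(A_i \cap A_j)/\PP(A_j) = n\,\PP(A_i \cap A_j)$, i.e.\ $\PP(A_i \cap A_j) = \chi_{ij}/n$. Then the correlation vector of these events, in the sense of Lemma~\ref{lemma:CORevents}, has $i$-coordinate $\PP(A_i) = 1/n$ and $ij$-coordinate $\PP(A_i \cap A_j) = \chi_{ij}/n$, which is exactly $\iota_n(\chi)$. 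Hence $\iota_n(\chi) \in \cor_n$, and it obviously lies in the affine slice $\{p_i = 1/n\}$.

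For the reverse inclusion, let $p \in \cor_n$ with $p_i = 1/n$ for all $i$. By Lemma~\ref{lemma:CORevents} there is a probability space and events $A_1,\dots,A_n$ with $p_i = \PP(A_i) = 1/n$ and $p_{ij} = \PP(A_i \cap A_j)$. Since all $\PP(A_i)$ are equal and positive, Lemma~\ref{lemma:TCFevents} applies and produces $\chi \in \tcf_n$ with $\chi_{ij} = \PP(A_i \mid A_j) = \PP(A_i \cap A_j)/(1/n) = n\,p_{ij}$. Thus $p_{ij} = \chi_{ij}/n$ and $p_i = 1/n$, so $p = \iota_n(\chi)$, showing $p$ lies in the image. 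Combining the two inclusions with injectivity gives the asserted bijection.

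I do not expect a genuine obstacle here: the statement is essentially a bookkeeping translation between the ``conditional probability of events'' description of $\tcf_n$ and the ``intersection probability of events'' description of $\cor_n$, the bridge being the normalization $\PP(A_i) \equiv 1/n$ that is built into both the definition of $\iota_n$ and Remark~\ref{remark:TCFevents}. The one point deserving a line of care is that one must invoke Remark~\ref{remark:TCFevents} in the forward direction to force the common probability to be exactly $1/n$ rather than merely constant; this is legitimate since $0 < 1/n \le 1/n$. (One may optionally remark that composing $\iota_n$ with the covariance mapping $\zeta_n$ of Proposition~\ref{prop:covariancemapping} yields the analogous embedding of $\tcf_n$ into $\cut_{n+1}$, but that is a corollary rather than part of this proof.)
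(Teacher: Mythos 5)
Your proof is correct and follows essentially the same route as the paper: injectivity is read off the edge coordinates of $\iota_n$, and both inclusions are obtained by translating between the probabilistic descriptions in Lemma~\ref{lemma:TCFevents} (with Remark~\ref{remark:TCFevents} used to normalize $\PP(A_i)\equiv 1/n$) and Lemma~\ref{lemma:CORevents}.
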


\begin{proof}
The map $\iota_n$ is injective by definition. First, we show that $\iota_n(\tcf_n) \subset \cor_n$. Because of Lemma~\ref{lemma:TCFevents} and Remark~\ref{remark:TCFevents}, a point $\chi \in \tcf_n$ has a stochastic model $A_1,\dots,A_n$ with $\PP(A_1)=\dots =\PP(A_n)=1/n$ and $\chi_{ij}=\PP(A_i \cap A_j)/\PP(A_j)$.  Lemma~\ref{lemma:CORevents}, applied to $A_{1},\ldots,A_{n}$ and $\PP$, shows that $\iota_{n}$ maps $\tcf_n$ to $\cor_{n}$. Now, suppose that $p \in \cor_{n}\cap \bigcap_{i=1}^n\{p_{i} = 1/n\}$. By Lemma~\ref{lemma:CORevents} there is a stochastic model with sets $A_{1},\ldots,A_{n}$, $\PP(A_{1})=\ldots=\PP(A_{n}) = 1/n$, $\PP(A_{i}\cap A_{j}) = p_{ij}$. Thus, $\chi = (n\cdot p_{ij})_{1 \leq i < j \leq n} $ is a preimage of $p$ in $\tcf_{n}$.
\end{proof}

\noindent Note that we just established the following equivalences
\begin{align*}
\chi \in \tcf_n \quad \Leftrightarrow \quad \iota_n(\chi) \in \cor_n \quad \Leftrightarrow \quad \zeta_n \circ \iota_n (\chi) \in \cut_{n+1}.
\end{align*}
In particular, one can pull back facets from $\cut_{n+1}$ to $\cor_{n}$ with the covariance mapping $\zeta_{n}$, and further, we obtain an $\mathcal{H}$-representation for $\tcf_{n}$ using $\zeta_{n} \circ \iota_{n}$. Thus, any $\mathcal{H}$-representation of $\cor_n$ or $\cut_{n+1}$ yields an $\mathcal{H}$-representation of $\tcf_n$ as follows. %

\begin{proposition}[Pulling back $\mathcal{H}$-representations]\label{prop:cutcorfacets}
$\phantom{a}$
\begin{enumerate}[a)]
\item {\normalfont (\cite{dezalaurent_97} Prop.\ 26.1.1, p.~402)} 
$\phantom{a}$\\
The covariance mapping $\xi_n:=\zeta_n^{-1}$ maps a valid inequality for $\cut_{n+1}$ (resp.\ facet of $\cut_{n+1}$)
\begin{align}\label{eqn:cuthalfsp}
\sum_{1 \leq i < j \leq n+1} c_{ij}x_{ij} \leq c_{0}
\end{align}
to the following valid inequality $\cor_{n}$ (resp.\ facet of $\cor_{n}$)
\begin{align}\label{eqn:corhalfsp}
\sum_{1\leq i \leq n} b_{i}p_{i} + \sum_{1 \leq i < j \leq n} (-2c_{ij})p_{ij} \leq c_{0} 
\quad \text{with} \quad 
b_{i} = \sum_{1 \leq s < i}c_{si} + \sum_{i < s \leq n+1}c_{is}.
\end{align}
\item The above valid inequality (resp.\ facet) of $\cut_{n+1}$ induces the following valid inequality for $\tcf_{n}$ via 
$\zeta_n \circ \iota_n$
\begin{align}\label{eqn:tcfhalfsp}
\sum_{1\leq i < j \leq n} (-2c_{ij}) \chi_{ij} \leq n\cdot c_{0} - 2 \sum_{1\leq i < j \leq n} c_{ij} - \sum_{i=1}^n c_{i,n+1}.
\end{align}
If applied to all elements of an $\mathcal{H}$-representation of $\cut_{n+1}$ (e.g.\ all facets of $\cut_{n+1}$), this gives an $\mathcal{H}$-representation for $\tcf_{n}$.
\end{enumerate}
\end{proposition}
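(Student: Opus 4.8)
Part a) is quoted verbatim from \cite{dezalaurent_97}, so only b) requires an argument. The plan is to run the substitution $\chi\mapsto p:=\iota_n(\chi)\mapsto x:=\zeta_n(p)$ explicitly, read off the inequality, and then exploit that this substitution is a bijection at the level of the ambient spaces in order to transport an entire $\mathcal{H}$-representation. First I would record the coordinates of $x=\zeta_n(\iota_n(\chi))$: by the definitions of $\iota_n$ (Proposition~\ref{prop:TCFembedding}) and $\zeta_n$ (Proposition~\ref{prop:covariancemapping}) one has $p_i=1/n$ and $p_{ij}=\chi_{ij}/n$, hence
\[
x_{i,n+1}=p_i=\frac1n\ \ (1\le i\le n),\qquad x_{ij}=p_i+p_j-2p_{ij}=\frac{2-2\chi_{ij}}{n}\ \ (1\le i<j\le n).
\]

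Substituting these into a valid inequality~(\ref{eqn:cuthalfsp}) of $\cut_{n+1}$, separating the $n$ edges incident to the vertex $n+1$ from the remaining $\binom n2$ edges, multiplying through by $n>0$ and expanding $\sum_{i<j}c_{ij}(2-2\chi_{ij})$, one obtains after rearrangement precisely~(\ref{eqn:tcfhalfsp}). Validity for $\tcf_n$ is then automatic: if $\chi\in\tcf_n$ then Propositions~\ref{prop:TCFembedding} and~\ref{prop:covariancemapping} give $\iota_n(\chi)\in\cor_n$ and hence $x\in\cut_{n+1}$, so $x$ satisfies~(\ref{eqn:cuthalfsp}) and therefore $\chi$ satisfies~(\ref{eqn:tcfhalfsp}).

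For the last assertion I would stress that the substitution above is reversible. The explicit formulas show that $\zeta_n$ is a linear \emph{bijection} of $\RR^{V_n\cup E_n}$ onto $\RR^{E_{n+1}}$ and that $\iota_n$ is injective with image contained in the affine slice $\{p\,:\,p_i=1/n,\ i\le n\}$; consequently, for \emph{every} $\chi\in\RR^{E_n}$ one has $\chi\in\tcf_n\iff x=\zeta_n(\iota_n(\chi))\in\cut_{n+1}$, and, for every such $\chi$, inequality~(\ref{eqn:tcfhalfsp}) holds for $\chi$ iff~(\ref{eqn:cuthalfsp}) holds for $x$. Hence, if the inequalities~(\ref{eqn:cuthalfsp}) ranging over an $\mathcal{H}$-representation of $\cut_{n+1}$ cut out $\cut_{n+1}$, then the transformed inequalities~(\ref{eqn:tcfhalfsp}) cut out exactly $\tcf_n$; since $\tcf_n$ is bounded by Corollary~\ref{cor:finitepolytope}, this system is an $\mathcal{H}$-representation of $\tcf_n$.

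I do not expect a genuine obstacle: the argument is a change of variables plus bookkeeping of coefficients. The two points that need care are (i) making the equivalence $\chi\in\tcf_n\iff\zeta_n(\iota_n(\chi))\in\cut_{n+1}$ hold on all of $\RR^{E_n}$ and not merely on $\tcf_n$ — otherwise one obtains only the inclusion $\tcf_n\subset\{\text{transformed inequalities}\}$ rather than equality — which is exactly why the bijectivity of $\zeta_n$ on the ambient spaces enters; and (ii) not over-claiming: a facet of $\cut_{n+1}$ need not pull back to a facet of $\tcf_n$, so only validity of the individual transformed inequalities and completeness of the whole transformed system are asserted.
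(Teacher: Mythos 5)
Your proposal is correct and takes the same route as the paper: substitute the explicit coordinates of $\zeta_n(\iota_n(\chi))$ into~(\ref{eqn:cuthalfsp}), multiply through by $n$, and rearrange to obtain~(\ref{eqn:tcfhalfsp}). The paper's proof is essentially this one-line substitution; you additionally make explicit the (correct) argument for the $\mathcal{H}$-representation claim, namely that $\tcf_n=(\zeta_n\circ\iota_n)^{-1}(\cut_{n+1})$ by Propositions~\ref{prop:TCFembedding} and~\ref{prop:covariancemapping} and that boundedness of $\tcf_n$ (Corollary~\ref{cor:finitepolytope}) makes the transformed system a genuine $\mathcal{H}$-representation — points the paper leaves implicit.
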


\begin{proof}
\begin{enumerate}[b)]
\item
It suffices to replace $x_{ij}$ in Inequality~(\ref{eqn:cuthalfsp}) by 
\begin{align*}
(\zeta_n \circ \iota_n(\chi))_{ij}
= \left\{\begin{array}{ll}
\frac{1}{n} & \qquad j=n+1,\smallskip\\
\frac{2}{n} - \frac{2}{n} \chi_{ij} & \qquad 1 \leq i < j \leq n.
\end{array}\right.
\end{align*}
and to reorder the resulting terms.
\end{enumerate}
\end{proof}

\paragraph{\textbf{\upshape Dual views on $\tcf_n$}} Summarizing, we obtain two complementary views on the polytope $\tcf_n$ which may be illustrated as follows.
\begin{center}
\begin{minipage}{0.45\textwidth}
\xymatrix{
  \Theta_n \ar@{>>}[d]_{\psi_n} \\
  \tcf_n  \ar@{^{(}->}[r]_{\iota_n}  & \cor_n  \ar[r]_{\zeta_n} & \cut_{n+1} 
}
\end{minipage}
\hspace{2mm}
\begin{minipage}{0.45\textwidth}
\xymatrix{
  \RR^{\finite_n^{(2)}} \ar@{>>}[d]_{\psi_n} \\
  \RR^{E_n}  \ar@{^{(}->}[r]_{\iota_n}  & \RR^{V_n \cup E_n}  \ar[r]_{\zeta_n} & \RR^{E_{n+1}} 
}
\end{minipage}
\end{center}
Here $\psi_n$ is given by the ``projection'' map (\ref{eqn:psin}), the map $\iota_n$ is the embedding from Proposition~\ref{prop:TCFembedding} and $\zeta_n$ the covariance mapping from Proposition~\ref{prop:covariancemapping}. 
While any $\mathcal{V}$-representation of $\Theta_n$ easily yields a $\mathcal{V}$-representation of $\tcf_n$ essentially by a projection, any $\mathcal{H}$-representation of $\cut_{n+1}$ easily yields an $\mathcal{H}$-representation of $\tcf_n$ essentially by an intersection. Unfortunately, $\Theta_n$ is a priori given by its facets (an $\mathcal{H}$-represenation), while $\cut_{n+1}$ is a priori given by its vertices (a $\mathcal{V}$-representation) and not the other way around, such that both views come along with certain drawbacks. 
At least the facets of $\cut_{n+1}$ are classified to some extent.

\paragraph{\textbf{\upshape The facets of $\cut_{n+1}$ and their generators}} \citep[Part V]{dezalaurent_97} 
Let us consider the following two kinds of actions on $\RR^{E_{n+1}}$. On the one hand the symmetric group $S_{n+1}$ acts on $\RR^{E_{n+1}}$ by node permutations: $(\sigma(x))_{ij}:= x_{\sigma(i)\sigma(j)}$ for $\sigma \in S_{n+1}$.  These actions are simply called \emph{permutations}. On the other hand each of the $2^n$ cut vectors $\delta(S)$ acts on $\RR^{E_{n+1}}$ by
\begin{align*}
(\delta(S)(x))_{ij} 
= \left\{ 
\begin{array}{ll}
1-x_{ij} & \text{if } \delta(S)_{ij}=1,\\
x_{ij} & \text{otherwise},
\end{array}
\right.
\end{align*}
for any $S \subset V_{n+1}=\{1,\dots,n+1\}$, \ie coordinates $x_{ij}$ corresponding to the edges of the cut beween $S$ and $S^c$ are replaced by $1-x_{ij}$. These actions are called \emph{switchings}. Note that $\delta(S) \circ \delta(R) = \delta(S \triangle R)$ and that $\delta(S) \circ \sigma = \sigma \circ \delta(\sigma(S))$.
In fact, both kinds of actions can be restricted to the cut polytope $\cut_{n+1}$. For any $\sigma \in S_{n+1}$ and any $S \subset V_{n+1}$
\begin{align*}
\sigma(x) \in \cut_{n+1} \quad \Leftrightarrow \quad x \in \cut_{n+1} \quad \Leftrightarrow \quad \delta(S)(x) \in \cut_{n+1}.
\end{align*}
These permutations and switchings on the polytope $\cut_{n+1}$ induce, of course, corresponding actions on its facets.
First, it is not surprising that (\ref{eqn:cuthalfsp}) is a facet inducing inequality of $\cut_{n+1}$ if and only if
\begin{align*}
\sum_{1 \leq i < j \leq n+1} c_{\sigma(i)\sigma(j)}x_{ij} \leq c_{0}
\end{align*}
is facet inducing for $\cut_{n+1}$. Second, any facet inducing inequality (\ref{eqn:cuthalfsp}) can be \emph{switched} by a cut vector $\delta(S)$
to another facet inducing inequality of $\cut_{n+1}$ which is given by
\begin{align*}
\sum_{1 \leq i < j \leq n+1} (1-2\delta(S)_{ij})c_{ij}x_{ij} \leq c_{0} - \sum_{1 \leq i < j \leq n+1} \delta(S)_{ij}c_{ij}.
\end{align*}
Let $O^{SP}(g,c_{0})$ denote the full orbit of a facet $g(x) \leq c_{0}$ under all possible finite applications of switchings and permutations to $g(x) \leq c_{0}$. The set of all facets of $\cut_{n+1}$ splits into finitely many such orbits, say  $O^{SP}_{i}$, $i\in I$. Choosing one facet $g^{(i)}(x) \leq c^{(i)}_{0}$ from each orbit $O^{SP}_{i}$ yields a set  of representatives $g^{(i)}(x) \leq c^{(i)}_{0}$, $i\in I$, of the facets of $\cut_{n+1}$, up to switchings and permutations. In this way \emph{generators} for the facets of $\cut_{n+1}$  are given in the literature. It is a feature of the cut polytope that it always has a set of \emph{homogeneous generators}, i.e. with $c^{(i)}_{0} = 0$, $i \in I$ \citep[Section 26.3.2]{dezalaurent_97}.

The facets of $\cut_{n+1}$ and corresponding generators are known for $n \leq 7$ \citep[p.~504]{dezalaurent_97}. In Table~\ref{table:cutgenerators} (Appendix~\ref{sect:tables}) we list the 11 generators of the $116\phantom{.}764$ facets of $\cut_7$ that will be used to derive the facets of $\tcf_6$.

\paragraph{\textbf{\upshape Relations to unit covariances}}
In their works on {\citeauthor{mcmillan1955history}'s} ({\citeyear{mcmillan1955history}}) realization problem concerning covariances of binary processes
{\cite{quint08}, \cite{la13,la15} and \cite{shepp1963positive,shepp1967covariances}} considered $\{-1,1\}$-valued random vectors $(U_1,\dots,U_n)$ (instead of $\{0,1\}$-valued vectors) and studied the set of \emph{unit covariances} 
\begin{align*}
\mathcal{U}_n := \left\{ u \in \RR^{E_n}  \pmid
\begin{array}{l}
\text{$u_{ij}=\EE (U_iU_j)$ where}\\
\text{$U_1,\dots,U_n$ take values in $\{-1,1\}$} 
\end{array}
\right\}.
\end{align*}
As a consequence of Lemma~\ref{lemma:CUTevents} (ii) (set $B_i=\{U_i=1\}$ therein) the cut polytope $\cut_n$ and the set of unit covariances $\mathcal{U}_n$ are affine equivalent via the bijective mapping $g_n:\RR^{E_n} \rightarrow \RR^{E_n}$, $g_n(x)=\frac{1}{2}(1-x)$ through
\begin{align}\label{eq:cutunit}
\cut_n = g_n(\mathcal{U}_n).
\end{align}
Let us further denote for $c \in [0,1]$ as in {\cite{shepp1963positive}}
\begin{align*}
\mathcal{U}_n(c) := \left\{ u \in \RR^{E_n}  \pmid
\begin{array}{l}
\text{$u_{ij}=\EE (U_iU_j)$ where}\\
\text{$U_1,\dots,U_n$ take values in $\{-1,1\}$} \\
\text{and $\PP(U_1=1)=\dots=\PP(U_n=1)=c$} 
\end{array}
\right\}.
\end{align*}
It is immediate that $\mathcal{U}_n(c)=\mathcal{U}_n(1-c)$ and 
repeating an argument from {\cite{shepp1963positive}}, p.~10, 
it is not difficult to see that 
$\mathcal{U}_n(c)$, $0\leq c \leq 1/2$ are increasing towards $\mathcal{U}_n(1/2)=\mathcal{U}_n$. 
The latter equality follows from the fact that the unit covariance of a $\{-1,1\}$-valued random vector remains unchanged after multiplication with an independent $\{-1,1\}$-valued zero mean variable.
The affine equivalence \eqref{eq:cutunit} can be refined to 
\begin{align}\label{eq:cutunitrefined}
\cut_n(c) = g_n(\mathcal{U}_n(c)), \quad c \in [0,1]
\end{align} 
if we set
\begin{align*}
\cut_n(c):=\pr_n(\cut_{n+1} \cap \{x \in \RR^{E_{n+1}} \,:\, x_{i,n+1}=c, \, 1\leq i \leq n\})
\end{align*}
and $\pr_n:\RR^{E_{n+1}} \rightarrow \RR^{E_n}$ is the projection onto the edges not containing the vertex $n+1$. 
A probabilistic description of the polytopes $\cut_n(c)$, $c \in[0,1]$ 
follows from the equivalence (i) in Lemma~\ref{lemma:CUTevents} (set $A_i=\{U_i=1\}$) and thereby proves the refinement \eqref{eq:cutunitrefined} as follows.

\begin{lemma}\label{lemma:CUTCevents}
A point $x \in \RR^{E_n}$ belongs to $\cut_n(c)$ if and only if can be written as $x_{ij}=\PP(A_i \triangle A_j)$, $1 \leq i < j \leq n$ for some probability space $(\Omega,\A,\PP)$ and measurable subsets $A_1,\dots,A_n \in \A$ satisfying $\PP(A_i)=c$, $1 \leq i \leq n$.
\end{lemma}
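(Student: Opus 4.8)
The statement to prove asserts that $x \in \cut_n(c)$ if and only if $x_{ij}=\PP(A_i \triangle A_j)$ for sets $A_1,\dots,A_n$ with $\PP(A_i)=c$. The plan is to trace through the definition of $\cut_n(c)$ and reduce everything to the already-established probabilistic description of $\cut_{n+1}$ given in Lemma~\ref{lemma:CUTevents}(i). Recall $\cut_n(c)=\pr_n\big(\cut_{n+1}\cap\{x: x_{i,n+1}=c,\ 1\le i\le n\}\big)$, where $\pr_n$ drops the edges incident to $n+1$.

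First, for the ``only if'' direction: take $x\in\cut_n(c)$, so $x=\pr_n(\tilde x)$ for some $\tilde x\in\cut_{n+1}$ with $\tilde x_{i,n+1}=c$ for all $i\le n$. Apply Lemma~\ref{lemma:CUTevents}(i) to $\tilde x\in\cut_{n+1}$: there is a probability space $(\Omega,\A,\PP)$ and measurable sets $A_1,\dots,A_n$ with $\tilde x_{i,n+1}=\PP(A_i)$ and $\tilde x_{ij}=\PP(A_i\triangle A_j)$ for $1\le i<j\le n$. The constraint $\tilde x_{i,n+1}=c$ translates exactly into $\PP(A_i)=c$, and since $x_{ij}=\tilde x_{ij}$ for $1\le i<j\le n$ we get the desired representation. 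For the ``if'' direction: given $A_1,\dots,A_n$ with $\PP(A_i)=c$, define $\tilde x\in\RR^{E_{n+1}}$ by $\tilde x_{i,n+1}:=\PP(A_i)=c$ and $\tilde x_{ij}:=\PP(A_i\triangle A_j)$ for $1\le i<j\le n$; Lemma~\ref{lemma:CUTevents}(i) says precisely that this $\tilde x$ lies in $\cut_{n+1}$, and it obviously satisfies $\tilde x_{i,n+1}=c$, so $x=\pr_n(\tilde x)\in\cut_n(c)$.

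Finally, I would close the loop with the promised consequence \eqref{eq:cutunitrefined}: combining the displayed equivalence with the description $\mathcal U_n(c)=\{(\EE(U_iU_j)):\PP(U_i=1)=c\}$ and setting $A_i=\{U_i=1\}$, one checks $\EE(U_iU_j)=1-2\PP(A_i\triangle A_j)$, i.e.\ $g_n$ carries the representation for $\mathcal U_n(c)$ to the one for $\cut_n(c)$, giving $\cut_n(c)=g_n(\mathcal U_n(c))$. I do not anticipate a genuine obstacle here — the lemma is essentially a bookkeeping corollary of Lemma~\ref{lemma:CUTevents}; the only mild care needed is to make sure the projection/extension between $E_{n+1}$ and $E_n$ coordinates is handled correctly and that the marginal constraint $\PP(A_i)=c$ matches the slicing constraint $x_{i,n+1}=c$ on the nose.
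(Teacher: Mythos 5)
Your proof is correct and is exactly the argument the paper has in mind: the paper itself dispatches this lemma with the remark that it ``follows from the equivalence (i) in Lemma~\ref{lemma:CUTevents},'' which is precisely the bookkeeping you carry out in detail (projecting/extending between $\RR^{E_{n+1}}$ and $\RR^{E_n}$ and matching the slicing constraint $x_{i,n+1}=c$ with the marginal constraint $\PP(A_i)=c$). Your closing verification that $g_n$ carries the $\mathcal U_n(c)$-representation to the $\cut_n(c)$-representation via $\EE(U_iU_j)=1-2\PP(A_i\triangle A_j)$ is also the intended justification of~\eqref{eq:cutunitrefined}.
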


A direct connection of unit covariances to $\tcf_n$ can be obtained 
from Lemma~\ref{lemma:TCFevents} and Remark~\ref{remark:TCFevents} (set $U_i=2 \cdot \eins_{A_i}-1$ therein) as
\begin{align*}f_n(\tcf_n) = \mathcal{U}_n(1/n),\end{align*}
where $f_n:\RR^{E_n} \rightarrow \RR^{E_n}$ is the bijective affine mapping $f_n(x)=\frac{4}{n}x-\frac{4}{n}+1$.
It can be easily checked that the following diagram commutes
if $\zeta_n$ and $\iota_n$ are the respective affine mappings from Propositions~\ref{prop:covariancemapping} and \ref{prop:TCFembedding}.

\begin{center}
\begin{minipage}{0.45\textwidth}
\xymatrix{
  \tcf_n \ar@{^{(}->}[r]^{\zeta_n \circ \iota_n} \ar[d]_{f_n} & \cut_{n+1} \ar@{>>}[d]^{\pr_n}\\
  \mathcal{U}_n(1/n)  \ar[r]_{g_n}  & \cut_n(1/n) 
}
\end{minipage}
\hspace{2mm}
\begin{minipage}{0.45\textwidth}
\xymatrix{
  \RR^{E_{n}} \ar@{^{(}->}[r]^{\zeta_n \circ \iota_n} \ar[d]_{f_n} & \RR^{E_{n+1}} \ar@{>>}[d]^{\pr_n}\\
  \RR^{E_{n}}  \ar[r]_{g_n}  & \RR^{E_{n}}
}
\end{minipage}
\end{center}

We remark the simple form of the mapping $(g_n \circ f_n)(x)=\frac{2}{n}(1-x)$. The following lemma shows that the polytopes $\cut_n(c)$, $0 < c \leq 1/n$ and  $\mathcal{U}_n(c)$, $0 < c \leq 1/n$ are also affine isomorphic.

\begin{lemma} For $\lambda \in [0,1]$ we have 
\begin{align*}
\cut_n(\lambda/n)&=\lambda \cdot \cut_n(1/n)\\
\mathcal{U}_n(\lambda/n)&=\lambda \cdot \mathcal{U}_n(1/n) + (1-\lambda).
\end{align*}
\end{lemma}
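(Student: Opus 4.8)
The plan is to derive both identities from the probabilistic description of $\cut_n(c)$ in Lemma~\ref{lemma:CUTCevents}, and then transfer the first one to the second via the affine equivalence \eqref{eq:cutunitrefined}. The case $\lambda = 0$ is immediate: $\cut_n(0) = \{0\}$ because $\PP(A_i) = 0$ forces $\PP(A_i \triangle A_j) = 0$, and $\mathcal{U}_n(0) = \{\mathbf{1}\}$ because then $U_i \equiv -1$; so I would assume $\lambda \in (0,1]$ from now on.

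For the inclusion ``$\supseteq$'' in the first identity, I would start from $x \in \cut_n(1/n)$ with a model $A_1,\dots,A_n$ on $(\Omega,\A,\PP)$ such that $\PP(A_i) = 1/n$ and $x_{ij} = \PP(A_i \triangle A_j)$, enlarge the space by an independent event $B$ with $\PP(B) = \lambda$, and put $A_i' := A_i \cap B$. Then $\PP(A_i') = \lambda/n$, and since $(A_i \cap B) \triangle (A_j \cap B) = (A_i \triangle A_j) \cap B$, independence gives $\PP(A_i' \triangle A_j') = \lambda x_{ij}$, so $\lambda x \in \cut_n(\lambda/n)$ by Lemma~\ref{lemma:CUTCevents}. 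For ``$\subseteq$'', given $y \in \cut_n(\lambda/n)$ with a model $A_1,\dots,A_n$, $\PP(A_i) = \lambda/n$, $y_{ij} = \PP(A_i \triangle A_j)$, I would set $A := \bigcup_{i=1}^n A_i$, observe $\PP(A) \le \sum_i \PP(A_i) = \lambda \le 1$, and rescale: $\PP' := \tfrac{1}{\lambda}\PP(\,\cdot\, \cap A)$ has total mass $\PP(A)/\lambda \le 1$ and extends to a probability measure on $\Omega$ augmented by one extra point carrying the remaining mass. Since $A_i \subseteq A$ and $A_i \triangle A_j \subseteq A$, we get $\PP'(A_i) = 1/n$ and $\PP'(A_i \triangle A_j) = y_{ij}/\lambda$, hence $y/\lambda \in \cut_n(1/n)$. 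This proves $\cut_n(\lambda/n) = \lambda \cdot \cut_n(1/n)$.

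For the second identity I would apply the inverse of the affine bijection $g_n(x) = \tfrac12(1-x)$ from \eqref{eq:cutunitrefined} (so $g_n^{-1}(x) = 1 - 2x$) to the identity just proved: $\mathcal{U}_n(\lambda/n) = g_n^{-1}\!\big(\cut_n(\lambda/n)\big) = g_n^{-1}\!\big(\lambda \cdot \cut_n(1/n)\big)$. The coordinatewise identity $g_n^{-1}(\lambda z) = 1 - 2\lambda z = \lambda(1-2z) + (1-\lambda) = \lambda\, g_n^{-1}(z) + (1-\lambda)$ then turns the right-hand side into $\lambda\, g_n^{-1}\!\big(\cut_n(1/n)\big) + (1-\lambda) = \lambda \cdot \mathcal{U}_n(1/n) + (1-\lambda)$, as required. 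The only step with genuine content is the inclusion $\cut_n(\lambda/n) \subseteq \lambda \cdot \cut_n(1/n)$: rescaling the measure by $1/\lambda$ must still yield a probability measure, which is exactly why the special value $c = 1/n$ enters — the union bound keeps $\PP(\bigcup_i A_i)$ at most $nc = 1$, so after dividing by $\lambda$ the leftover mass $1 - \PP(A)/\lambda$ stays nonnegative; the argument would fail for a value $c$ with $nc > 1$.
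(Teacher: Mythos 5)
Your proof is correct and follows essentially the paper's approach: the ``$\subseteq$'' inclusion (restrict $\PP$ to $A=\bigcup_i A_i$, rescale by $1/\lambda$, extend) and the derivation of the second identity via the affine map $g_n$ are exactly what the paper does. The one genuine difference is the ``$\supseteq$'' inclusion: where the paper again rescales (extending $\lambda\cdot\PP|_A$ to a probability measure), you instead intersect every $A_i$ with an independent event $B$ of probability $\lambda$, using $(A_i\cap B)\triangle(A_j\cap B)=(A_i\triangle A_j)\cap B$. Both constructions are valid; the paper's has the advantage of being symmetric with the other inclusion, while yours is a cleaner probabilistic thinning argument that also makes the role of $\lambda$ as a scaling factor transparent. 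Your explicit handling of $\lambda=0$ is a small improvement — the paper's rescaling by $1/\lambda$ tacitly assumes $\lambda>0$.
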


\begin{proof}
The second relation follows from the first by applying the map $g_n$. We prove the first statement using Lemma~\ref{lemma:CUTCevents} for both inclusions (``$\subset$'' and ``$\supset$''), where we may assume that $A := \bigcup_{i=1}^n A_i \neq \Omega$ (otherwise add a point to $\Omega$). 
An element $x \in \cut_n(\lambda/n)$ admits the representation $x_{ij}=\PP(A_i \triangle A_j)$ for sets $A_1,\dots,A_n$ with $\PP(A_i)=\lambda/n$. 
It follows that $\PP(A)\leq \lambda$ and we can extend $\PP':=(1/\lambda) \cdot \left.\PP\right|_A$ to a probability measure on $\Omega$ which gives $x_{ij}=\lambda \PP'(A_i\triangle A_j)$ with $(\PP'(A_i\triangle A_j))_{1 \leq i<j \leq n} \in \cut_n(1/n)$. 
Conversely, $x \in \cut_n(1/n)$ admits the representation $x_{ij}=\PP(A_i \triangle A_j)$ for sets $A_1,\dots,A_n$ with $\PP(A_i)=1/n$ and we can extend the measure $\PP':= \lambda \cdot \left.\PP\right|_A$ to $\Omega$ which gives 
$\lambda \cdot x_{ij}= \PP'(A_i\triangle A_j)$ with $(\PP'(A_i\triangle A_j))_{1 \leq i<j \leq n} \in \cut_n(\lambda/n)$.
\end{proof}

Together with $(g_n \circ f_n)^{-1}(y)=1-\frac{n}{2}y$ this identifies the polytope $\tcf_n$ as
\begin{align}\label{eq:tcfcutunit}
\tcf_n = 1 - \frac{1}{2c} \cut_n(c) = 1- \frac{1}{4c} \left(1-\mathcal{U}_n(c)\right), \quad \text{ for any } c \in (0,1/n].
\end{align}
Hence, any better understanding on one of the polytopes in \eqref{eq:tcfcutunit}
will automatically transfer to all the other ones.



\section{Computational results}\label{sect:compresults}
We computed the vertices and facets of $\tcf_n$ for $n \leq 6$  using the software \texttt{R} \citep{R} and \texttt{polymake} \citep{polymake}. 
Their explicit representatives are documented in the tables of Appendix~\ref{sect:tables}.
In order to obtain the vertices and facets of $\tcf_6$,
we had to use both views on $\tcf_{6}$ described at the end of Section~\ref{sect:corcut}: a $\mathcal{V}$-representation of $\tcf_6$ was obtained via the polytope $\Theta_{6}$, the reduction to the vertex representation Ex($\tcf_{6}$) took extra efforts. An $\mathcal{H}$-representation for $\tcf_{6}$ was obtained via the embedding into $\cut_{7}$ (using the known facet-representation), from which we extracted a facet-representation of $\tcf_{6}$ using the previously computed vertices Ex($\tcf_{6}$). Below we  give a detailed description of our methods.



\paragraph{\textbf{\upshape The vertices and facets of $\tcf_n$ for $ 2 \leq  n \leq 5$}}
$\phantom{a}$\\
For $n \leq 4$ the vertices and facets of $\tcf_n$ were computed already in \cite{strokorb_13} p.~62. In particular, all vertices are $\{0,1\}$-valued, hence clique partition points (cf.\ Proposition~\ref{prop:cpplowdim}). 

The vertices and facets of $\tcf_5$ have been obtained directly using \texttt{R} and \texttt{polymake} via the two different approaches presented in Section~\ref{sect:corcut} (leading to the same result): via the polytope $\Theta_5$ and the embedding into the correlation polytope $\cor_5$ (defined by its vertices).
Here, for $n=5$, the software \texttt{R} was simply used to generate the input for \texttt{polymake}.  From these computations we see that $\tcf_5$ has 214 vertices in 11 permutation orbits as listed in Table~\ref{table:TCFfivevertices} (Appendix~\ref{sect:tables}). While 52 vertices in 7 permutation orbits are $\{0,1\}$-valued (the expected clique partition points), for the first time also $\{0,\OH\}$-valued vertices turn up (162 in 4 permutation orbits).

Representatives for the permutation orbits of the facets of $\tcf_{n}$ for each $2\leq n \leq 5$ are listed in Table~\ref{table:hypermetricTCFfacets} in the Appendix~\ref{sect:tables}. Since all facets turned out to be hypermetric, we  describe them by their defining vectors $b \in \ZZ^n$.  In particular, we obtain the following result.

\begin{proposition}\label{prop:nleq5hypermetric} For $n \leq 5$ all facets of $\tcf_{n}$ are hypermetric.
\end{proposition}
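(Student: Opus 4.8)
The plan is to reduce the claim to an explicit finite verification: enumerate all facets of $\tcf_n$ for $n\in\{2,3,4,5\}$ and exhibit, for each of them, a defining integer vector $b$ in the sense of \eqref{eq:hypermetric}. Since membership in $\tcf_n$ is invariant under relabelling $\{1,\dots,n\}$, the symmetric group $S_n$ acts on $\tcf_n$ and hence permutes its facets, and hypermetricity is preserved under this action (the image of the inequality defined by $b$ under $\sigma\in S_n$ is the inequality defined by $b\circ\sigma$). It therefore suffices to inspect one representative from each permutation orbit of facets, and such representatives are what gets recorded in Table~\ref{table:hypermetricTCFfacets}. For $n\leq 4$ one may start from $\tcf_n=\cpp_n$ (Proposition~\ref{prop:cpplowdim}) and read the facets off the clique partition polytope; for $n=5$, where $\tcf_5$ is no longer a $\{0,1\}$-polytope, the facets have to be computed directly, which I would do along the two complementary routes of Section~\ref{sect:corcut}: obtain a $\mathcal V$-representation of $\tcf_5$ by projecting the vertices of $\Theta_5$ via $\psi_5$ and convert it to an $\mathcal H$-representation by software, and independently pull an $\mathcal H$-representation of $\tcf_5$ back from the known facets of $\cut_6$ through $\zeta_5\circ\iota_5$ as in Proposition~\ref{prop:cutcorfacets}. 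Agreement of the two resulting facet lists is the decisive correctness check.

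Given the list of orbit representatives, for each inequality $\sum_{i<j}c_{ij}\chi_{ij}\leq c_0$ I would test whether some positive rational multiple of it is the hypermetric inequality defined by a $b\in\ZZ^n$. Lemma~\ref{lemma:condforhypermetricity} supplies quick necessary conditions --- the edges carrying a nonzero $c_{ij}$ must induce a complete subgraph of $K_n$, and the coefficient rows $v_1,v_2$ must be proportional --- which one checks first; whenever they hold, the absolute values $|b_i|$ are forced (up to a common scale) by ratios of the $c_{ij}$, the global scale and the signs are then pinned down so that $b\in\ZZ^n$, and finally the constant is matched against $\tfrac12\sum_i b_i(b_i-1)$. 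A useful simplification is that a lifting of a hypermetric facet of $\tcf_{n-1}$ is automatically hypermetric, with $b$ padded by a zero coordinate (cf.\ Proposition~\ref{prop:zeroliftingfacets}), so only the genuinely new facets of $\tcf_5$ require fresh work. Carrying this out produces a defining $b$ for every orbit representative, namely the vectors listed in Table~\ref{table:hypermetricTCFfacets}; combined with $S_n$-invariance of hypermetricity this shows all facets of $\tcf_n$ are hypermetric for $n\leq 5$.

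I expect the only real obstacle to be computational rather than conceptual: certifying that the facet list of $\tcf_5$ --- a polytope in $\RR^{10}$ with $214$ vertices in $11$ orbits --- is complete. This is precisely why the independent computation via $\Theta_5$ and via $\cut_6$ is worth carrying out; once a trustworthy facet list is available, the hypermetricity check for each orbit representative is entirely routine, and for $n\leq 4$ it is essentially immediate.
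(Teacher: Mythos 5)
Your proposal matches the paper's approach: the proof is a finite software-aided enumeration of the facets of $\tcf_n$ for $n\leq 5$ (using the projection from $\Theta_n$ and the embedding into $\cor_n\cong\cut_{n+1}$ as independent cross-checks), followed by exhibiting a defining $b$-vector for each permutation orbit representative as recorded in Table~\ref{table:hypermetricTCFfacets}. The paper handles $n\leq 4$ by citing the prior direct computations in \cite{strokorb_13} rather than by reading facets off $\cpp_n$, but that is a cosmetic difference and your reasoning is sound.
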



\noindent Let us now turn to the case $n=6$, which needed additional arguments to reduce the computational burden.

\paragraph{\textbf{\upshape The vertices of $\tcf_6$}}
$\phantom{a}$\\
According to our computational results, the polytope $\tcf_{6}$  possesses 28895 vertices in 88 permutation orbits, whose representatives are listed in Table~\ref{table:TCFsixvertices} in the Appendix~\ref{sect:tables}. 
For the first time, also $\{0,\OT,\TT\}$-valued vertices occur, more precisely,
\begin{itemize}
\item 203 vertices in 11 orbits are $\{0,1\}$-valued, 
\item 4662 vertices in 16 orbits are $\{0,\OH\}$-valued,
\item 2430 vertices in 11 orbits are $\{0,\OH,1\}$-valued,
\item 21600 vertices in 50 orbits are $\{0,\OT,\TT\}$-valued.
\end{itemize}
%
\noindent It was not feasible to use the simple embedding of $\tcf_6$ into $\cut_6$
from Section~\ref{sect:corcut} and  \texttt{polymake} to compute the vertices by common standard hardware in reasonable time. Instead, we used  the projection of the $\Theta_{6}$ polytope in (\ref{eqn:psin}) to obtain a $\mathcal{V}$-representation for $\tcf_{6}$, from which - with some additional efforts - we extracted the vertex representation.

\begin{description}
\item[1st step:]\emph{Computing a $\mathcal{V}$-representation of $\tcf_6$.} \\ With \texttt{R} we generated the input for \texttt{polymake} (63  inequalities with 58 coefficients each) to define the polytope $\Theta_{6}$ in $\RR^{57}$. Then \texttt{polymake}  computed the 200$\phantom{.}$214 extremal points  of $\Theta_{6}$ in less then 20 minutes by standard hardware. We projected the extremal points  of $\Theta_{6}$ onto the 15 coordinates for $\tcf_{6}$, applied the coordinatewise $2-x$-transformation, and removed duplicates. This gave us 168$\phantom{.}$894 points in $[0,1]^{15}$ with convex hull $\tcf_{6}$ (a $\mathcal{V}$-representation of $\tcf_6$). 
Their coordinate values were all fractions $\frac{a}{b}, 0 \leq a \leq b \leq 9$.
\item[2nd step:] \emph{Reduction to a vertex representation of $\tcf_{6}$.}\\ 
It was not feasible to extract the subset of extremal points directly by \texttt{polymake}. Using \texttt{R} we determined the 521 permutation orbits of these 168$\phantom{.}$894 convex hull points and chose 521 representatives. These representatives included the 11 well-known representatives for $\Ex(\tcf_{6}) \cap \{0,1\}^{15}$ (i.e., the clique partition points of the complete graph $K_{6}$, see Proposition~\ref{prop:cpp}), and the 4 liftings of the 4 representatives for $\Ex(\tcf_{5}) \cap \{0,1/2\}^{10}$ described 
 above (see Table~\ref{table:TCFfivevertices} in the Appendix~\ref{sect:tables}). This gave us a list of 15 representatives known to be extremal and 506 undecided ones.\\
The extremal ones among them were identified as follows.\\
a) First, we took the union of the full permutation orbits of the 15 known representatives, a set of 1175 points, 
and added the undecided 506 candidates. The resulting list of 1681 points was handed over to \texttt{polymake}, which computed the 1259 extremal points of their convex hull (among them the previously mentioned set of 1175 points). Any candidate from the 506-list not appearing among these 1259 extremal points is a strict convex combination of points from $\tcf_{6}$, thus not extremal. This left us with the 15 representatives known to extremal plus only $84 = 1259-1175$ undecided representatives from the previous list of 506.\\ 
b) For each of the remaining 84 undecided representatives we computed with {\tt polymake}, if there is a hyperplane positively separating this selected representative from the union of all orbits of the 15 representatives known to extremal and the 83 other undecided representatives (in each case roughly 30000 points). If so, the selected representative is extremal, otherwise not. For a proof of this statement see the following Lemma~\ref{lemma:decidingextremality}. 
In this way we found $73$ extremal representatives among the $84$ undecided ones, which led to the 15+73= 88 representatives for $\Ex(\tcf_{6})$ in Table~\ref{table:TCFsixvertices} (Appendix~\ref{sect:tables}).
\end{description}

In order to justify the last step, the following lemma is needed.

\begin{lemma}\label{lemma:decidingextremality}
Let $A \subset \RR^n$ and $B\subset \RR^n$ be two disjoint finite sets with the property that 
either $B \subset \Ex(A\cup B)$  
or $B \cap \Ex(A \cup B) = \emptyset$ (property {\upshape ($*$)} in the proof).
Let $x$ be a point from $B$. 
Then 
$x \in \Ex(A\cup \{x\})$ if and only if $x \in \Ex(A \cup B)$.
\end{lemma}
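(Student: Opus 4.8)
The plan is to prove both implications of the equivalence $x \in \Ex(A \cup \{x\}) \Leftrightarrow x \in \Ex(A \cup B)$ by exploiting that $x$ being extremal is a ``local'' property: a point fails to be extremal in a finite set precisely when it lies in the convex hull of the remaining points. The easy direction is $x \in \Ex(A\cup B) \Rightarrow x \in \Ex(A \cup \{x\})$: since $A \cup \{x\} \subset A \cup B$, if $x$ were a strict convex combination of points of $(A\cup\{x\}) \setminus \{x\} = A$, it would also be a strict convex combination of points of $(A \cup B)\setminus\{x\}$, contradicting extremality in the larger set. This uses only monotonicity of the non-extremality condition under enlarging the ambient set (while keeping $x$), and requires no hypothesis on $B$.

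For the reverse direction, suppose $x \in \Ex(A\cup\{x\})$ and assume for contradiction that $x \notin \Ex(A \cup B)$, i.e.\ $x \in \conv\bigl((A\cup B)\setminus\{x\}\bigr)$. Here I invoke property $(*)$: since $x \in B$ and $x$ is \emph{not} extremal in $A \cup B$, we cannot be in the case $B \subset \Ex(A\cup B)$, so we must be in the case $B \cap \Ex(A\cup B) = \emptyset$; hence \emph{every} point of $B$ is a convex combination of the remaining points of $A\cup B$. The idea is then to ``eliminate'' the $B$-points from the convex representation of $x$: write $x = \sum_{a \in A}\lambda_a a + \sum_{b \in B\setminus\{x\}}\mu_b b$ with nonnegative coefficients summing to $1$, and repeatedly substitute each occurring $b \in B$ by its own convex representation in terms of $(A\cup B)\setminus\{b\}$. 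The main obstacle is making this substitution process terminate and produce a representation of $x$ using points of $A$ only --- naive substitution could reintroduce $b$-terms indefinitely. I would handle this by passing to $\Ex(A \cup B)$ directly: by Minkowski's theorem every point of the polytope $\conv(A\cup B)$, in particular $x$, lies in $\conv(\Ex(A\cup B))$, and since $B \cap \Ex(A\cup B) = \emptyset$ we have $\Ex(A\cup B) \subset A$; therefore $x \in \conv(\Ex(A\cup B)) \subset \conv(A \setminus \{x\})$ --- and $x \notin A$ since $A$ and $B$ are disjoint and $x \in B$ --- so $x \notin \Ex(A \cup \{x\})$, the desired contradiction.

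Thus the real content is the clean observation that $\Ex(A \cup B) \subset A$ whenever $B \cap \Ex(A\cup B) = \emptyset$ (extremal points of a finite set are always among that set), combined with Minkowski/Krein--Milman for polytopes to rewrite $x$ over extreme points, which here automatically lie in $A$. I expect the write-up to be short; the only subtlety worth spelling out is why property $(*)$ forces the case $B \cap \Ex(A\cup B)=\emptyset$ once we know $x\in B$ is non-extremal in $A\cup B$, and the trivial remark that $x \notin A$ so that ``extremal in $A \cup \{x\}$'' genuinely refers to $x$ being outside $\conv(A)$.

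\begin{proof}
Since $A$ and $B$ are disjoint and $x \in B$, we have $x \notin A$, so $(A \cup \{x\}) \setminus \{x\} = A$ and $x \in \Ex(A\cup\{x\})$ is equivalent to $x \notin \conv(A)$.

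``$\Leftarrow$'': Assume $x \in \Ex(A\cup B)$, i.e.\ $x \notin \conv\bigl((A\cup B)\setminus\{x\}\bigr)$. As $A \subset (A\cup B)\setminus\{x\}$, this implies $x \notin \conv(A)$, hence $x \in \Ex(A\cup\{x\})$.

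``$\Rightarrow$'': Assume $x \in \Ex(A\cup\{x\})$, equivalently $x \notin \conv(A)$, and suppose for contradiction that $x \notin \Ex(A\cup B)$. Then $x \in B$ is a point of $A\cup B$ which is not extremal in $A \cup B$, so $B \not\subset \Ex(A\cup B)$. By property $(*)$ it follows that $B \cap \Ex(A\cup B) = \emptyset$, hence $\Ex(A\cup B) \subset (A\cup B)\setminus B = A$. By Minkowski's theorem for the polytope $\conv(A\cup B)$, the point $x$ lies in $\conv\bigl(\Ex(A\cup B)\bigr) \subset \conv(A)$, contradicting $x \notin \conv(A)$. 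Therefore $x \in \Ex(A\cup B)$.
\end{proof}
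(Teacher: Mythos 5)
Your proof is correct and follows essentially the same route as the paper: the crux in both is that property $(*)$ together with $x\in B$ being non-extremal forces $\Ex(A\cup B)\subset A$, whence $x\in\conv(\Ex(A\cup B))\subset\conv(A)$ (what you call Minkowski's theorem is the paper's observation $\conv(S)=\conv(\Ex(S))$ for finite $S$), contradicting $x\in\Ex(A\cup\{x\})$. The only cosmetic difference is that you argue the ``$\Leftarrow$'' direction directly via $A\subset(A\cup B)\setminus\{x\}$, while the paper states it in contrapositive form.
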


\noindent (Our application in mind is $A \subset \RR^{\binom{n}{2}}$, a union of $S_{n}$-orbits, $B \subset \RR^{\binom{n}{2}}$ another $S_{n}$-orbit. Then the above condition $(*)$ holds, since $S_{n}$ acts via invertible linear maps.)

\begin{proof}
Note that the following identities hold trivially for a finite set $A \subset \RR^n$:\\
$\Ex(A) \subset A$ ($*$1) and $\conv(A) = \conv(\Ex(A))$ ($*$2).
Hence, the assertion is a consequence of the following.\\
``$\Leftarrow$'': $x \not\in \Ex(A\cup \{x\}) \overset{(*1)}{\Rightarrow} \Ex(A\cup \{x\}) \subset A \overset{(*2)}{\Rightarrow} x \in \conv(A)$, thus $x$ is a convex combination of points from $A$ (which are different from $x$, since $x \in B$, $A \cap B = \emptyset$), thus  $x \not\in \Ex(A \cup B)$.\\
``$\Rightarrow$'': $x \not\in \Ex(A \cup B) \overset{(*)}{\Rightarrow} B \cap \Ex(A \cup B) = \emptyset \overset{(*1)}{\Rightarrow} \Ex(A \cup B) \subset A \overset{(*2)}{\Rightarrow} \conv(A\cup B) \subset \conv(A) \Rightarrow x \in \conv(A)$, as above now  $x \not\in \Ex(A\cup\{x\})$ follows.
\end{proof}

\paragraph{\textbf{\upshape The facets of $\tcf_6$}}
$\phantom{a}$\\
It turned out that $\tcf_{6}$ has 18720 facets which split into 67 permutation orbits. For an  annotated complete list see Table~\ref{table:TCFsixfacets} in the Appendix~\ref{sect:tables}. The 67 representatives for $\tcf_{6}$ are grouped into 11 classes, according to their ``ancestral cut polytope generator'' (see below). The first 6 generators led to 6 classes with 17 representatives for $\tcf_{6}$, which are all hypermetric. A list of the corresponding 17~$b$-vectors is given in Table~\ref{table:hypermetricTCFsixfacets} (Appendix~\ref{sect:tables}). The remaining 5 generators induced 50~representatives and all of them are non-hypermetric (this is easily checked using Lemma~\ref{lemma:condforhypermetricity} and Remark~\ref{rk:condforhyp} for all but the 7th inequality derived from generator 9, for this one the vectors $c_{2,4},c_{2,5},c_{2,6}$ and $c_{3,4},c_{3,5},c_{3,6}$ are independent and the same reasoning as for criterion (b) of Lemma~\ref{lemma:condforhypermetricity} works). Thus, the number of hypermetric orbits is 17 out of 67 ( $\approx 25.4\%$), with 858 hypermetric facets out of 18720 (just $\approx 4.6\%$). \\
\\
We obtained this list of representatives for the facets of $\tcf_6$ in using known results about the cut polytope $\cut_{7}$ (Section~\ref{sect:corcut}), the previously computed vertex set $\Ex(\tcf_{6})$ and the software \texttt{R}:
\begin{description}
\item[1st step:]  
Choose one of the 11 homogeneous generators $g_{i} \leq 0$, $i \in \{1,\ldots,11\}$ for the  facets of the cut polytope $\cut_{7}$ (see Table~\ref{table:cutgenerators} (Appendix~\ref{sect:tables}) and Section~\ref{sect:corcut}). Compute the list of all  facets of $\cut_{7}$ generated by $g_{i}$ w.r.t.\ switchings and permutations (cf. Section~\ref{sect:corcut}). This results in an $a_{i} \times 22$ matrix with $a_{i} \leq 40320$ for all $i$ (see also \cite{dezalaurent_97} Figure~30.6.1).
\item[2nd step:]  
Apply the simple map from Proposition~\ref{prop:cutcorfacets} to all rows of the matrix from step 1. This yields a set of valid inequalities for $\tcf_{6}$ (an $a_{i} \times 16$ matrix), which is permutation invariant by construction.
Choose representatives of the permutation orbits (the largest count was 93 representatives).
\item[3rd step:] 
Use the $28\phantom{.}895$ precomputed vertices in $\Ex(\tcf_{6})$ to decide for each representative from step 2, if it defines a facet of $\tcf_{6}$. For that, first determine which vertices from $\Ex(\tcf_{6})$  solve the inequality as an equality. Then check if the rank of the matrix of solutions with an added 1-column in front is at least 15. We used the vertex set $6 \cdot \Ex(\tcf_{6})$ to make all computations integer valued, so the rank-checking procedure should be computationally reliable in this case. This gives a list of representatives for certain permutation orbits of $\tcf_{6}$-facets ``stemming from the cut polytope generator $g_{i}$''.
\item[4th step:] If done for all 11 generators, the union of the 11 lists obtained in step 3 gives a complete list of representatives of the facets of $\tcf_{6}$. This holds true, since the set of all valid inequalities obtained in the second step for all $1 \leq i \leq 11$  defines $\tcf_{6}$ by Proposition~\ref{prop:cutcorfacets}, thus we know that the facets of $\tcf_{6}$ are a subset.
Finally, we checked that representatives from different lists have different permutation orbits. Thus, the 11 lists partition a minimal set of facet representatives for $\tcf_{6}$ according to the unique ``ancestral cut polytope generator''.
\end{description}
\begin{remark}
It is feasible to generate all 116 764 facets of $\cut_{7}$ in step 1, and go through steps 2 and 3 (testing 391 representatives from step 2), to just obtain the 67 facet representatives, but then relating them to the different cut polytope generators needs extra bookkeeping.
\end{remark}
\begin{remark}
One can exploit the interaction of the permutation group actions on $\cut_{n+1}$ and $\tcf_{n}$ to avoid the large row counts in step 1 and 2. Starting from a list $h_{j} \leq c_{j}, j \in J$ of facet representatives for the cut polytope $\cut_{n+1}$ w.r.t. \emph{permutations} ($|J| =108$ in the case $n=6$) there is a way to immediately compute a list of at most $(n+1)\cdot |J|$ valid inequalities for $\tcf_{n}$ that contains a complete collection of facet representatives for $\tcf_{n}$ as a sublist (details omitted). This might get interesting if one wants to investigate $\tcf_{n}$ for $n \geq 7$ using knowledge about $\cut_{n+1}$.
\end{remark}

\section{Some open questions on the geometry of $\tcf_n$}\label{sect:openquestions}
Finally, we pursue some questions which arose while studying the convex polytope $\tcf_n$ that remained open to us. To this end, let $\psd_n \subset \RR^{E_n}$ be the space of symmetric and positive semi-definite $n\times n$ matrices in the sense of \eqref{eq:psd}. As mentioned in the introduction, it is well-known that all elements of  $\tcf_n$ are positive semi-definite, that is 
\begin{align*}
\tcf_n \subset \psd_n.
\end{align*} 
It is natural to ask whether certain subsets of inequalities from facets of $\tcf_n$ imply already positive semi-definiteness.  A simple candidate for such a question could be all facets at the exposed vertices $v_0=(0,0,\ldots,0)$ and $v_1=(1,1,\ldots,1)$ of $\tcf_n$. Let us denote the polytope which is defined by these facets by $\tcf_n(v_0,v_1)$. The following problem can be seen in a similar vein to Matheron's conjecture \citep{matheron_93}. 


{\it
\begin{enumerate}[(F)]
\item For which values of $n$ does $\tcf_n(v_0,v_1) \subset \psd_n$ hold?
\end{enumerate}
}

\noindent Therefore, let us take a closer look at the facets of $\tcf_n$ at the exposed vertices $v_0$ and $v_1$. The facets at $v_{0}=(0,0,\ldots,0)$ are just the positivity inequalities $\chi_{ij}\geq 0$, which are hypermetric with $b = \mathbbm{1}_{\{i,j\}}$.  To investigate the facets of $\tcf_n$ at $v_{1} = (1,1,\ldots,1)$, the following simple lemma is helpful.
\begin{lemma} \label{lemma:hypequalityat1} A hypermetric inequality given by $b \in \ZZ^n$ is satisfied as an equality by $v_{1}$ if and only if $\sum_{i=1}^n b_{i} \in \{0,1\}$.
\end{lemma}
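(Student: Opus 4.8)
The plan is to compute directly what the hypermetric inequality defined by $b$ says at the point $v_1 = (1,1,\dots,1)$, using the form \eqref{eq:hypermetric}, and to compare the two sides. The key observation is that at $v_1$ the left-hand side of the first inequality in \eqref{eq:hypermetric} becomes $\sum_{1 \leq i,j \leq n} b_i b_j x_{ij} = \sum_{1 \leq i,j \leq n} b_i b_j = \left(\sum_{i=1}^n b_i\right)^2$, since $x_{ij} = 1$ for all $i,j$ (including the diagonal convention $x_{ii} := 1$). The right-hand side is $\sum_{i=1}^n b_i$. So $v_1$ satisfies the hypermetric inequality as an equality if and only if $\left(\sum_{i=1}^n b_i\right)^2 = \sum_{i=1}^n b_i$.

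First I would set $k := \sum_{i=1}^n b_i \in \ZZ$, which is an integer because $b \in \ZZ^n$. Then the equality condition reads $k^2 = k$, i.e.\ $k(k-1) = 0$, which over the integers holds precisely when $k \in \{0,1\}$. This gives exactly the claim $\sum_{i=1}^n b_i \in \{0,1\}$. One should also note for completeness that $v_1$ always \emph{satisfies} the hypermetric inequality (it lies in $\tcf_n$, being realized by a single constant random variable, or by Lemma~\ref{lemma:hypervalid}), so $k^2 \geq k$ always — consistent with the above, since $k^2 - k = k(k-1) \geq 0$ for every integer $k$.

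There is essentially no obstacle here; the only thing to be careful about is bookkeeping with the diagonal convention. In the ``matrix'' form $\sum_{1 \leq i,j \leq n} b_i b_j x_{ij}$ the diagonal terms $b_i^2 x_{ii} = b_i^2$ are included, and the identity $\sum_{1 \leq i,j \leq n} b_i b_j x_{ij} = \left(\sum_i b_i\right)^2$ at $v_1$ uses exactly that $x_{ii} = 1$. If one instead works with the edge form $\sum_{1 \leq i < j \leq n}(-b_i b_j) x_{ij} \leq \frac12 \sum_i b_i(b_i - 1)$, then at $v_1$ the left side is $-\sum_{i<j} b_i b_j = \frac12\left(\sum_i b_i^2 - \left(\sum_i b_i\right)^2\right)$ and the right side is $\frac12\left(\sum_i b_i^2 - \sum_i b_i\right)$; equality again reduces to $\left(\sum_i b_i\right)^2 = \sum_i b_i$, so the two formulations agree. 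Hence the proof is the short computation above.

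\begin{proof}
Write $k := \sum_{i=1}^n b_i \in \ZZ$. Evaluating the left-hand side of the first form of the hypermetric inequality \eqref{eq:hypermetric} at $v_1 = (1,1,\dots,1)$ (recall the convention $x_{ii} := 1$) gives
\begin{align*}
\sum_{1 \leq i,j \leq n} b_i b_j \cdot 1 = \Big(\sum_{i=1}^n b_i\Big)^2 = k^2,
\end{align*}
while the right-hand side equals $\sum_{i=1}^n b_i = k$. Thus $v_1$ satisfies the hypermetric inequality defined by $b$ as an equality if and only if $k^2 = k$, i.e.\ $k(k-1) = 0$. Since $k$ is an integer, this holds precisely when $k \in \{0,1\}$, that is, when $\sum_{i=1}^n b_i \in \{0,1\}$.
\end{proof}
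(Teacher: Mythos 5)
Your proof is correct and is essentially the same as the paper's: both evaluate $\sum_{i,j} b_i b_j x_{ij}$ at $v_1$ to get $\bigl(\sum_i b_i\bigr)^2$ and reduce the equality condition to $k^2 = k$ for the integer $k = \sum_i b_i$. Your writeup merely spells out the integer-root step $k(k-1)=0 \Leftrightarrow k \in \{0,1\}$, which the paper leaves implicit.
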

\begin{proof}
$\sum_{1\leq i,j\leq n}b_{i}b_{j}\cdot 1=\sum_{i=1}^{n}b_{i}$  if and only if $(\sum_{i=1}^n {b_{i}})^2 = \sum_{i=1}^n {b_{i}}$.
\end{proof}

\noindent 
A hypermetric inequality is \emph{pure hypermetric} if its corresponding $b$-vector satisfies $b \in \{-1,0,1\}^n$.
Using this lemma and inspecting Tables~\ref{table:hypermetricTCFfacets}, \ref{table:TCFsixfacets} and \ref{table:hypermetricTCFsixfacets} we derive  the following proposition.

\begin{proposition}[Facets of $\tcf_n$ at $v_1$] \label{prop:facets_at1} $\phantom{a}$\\[-6mm]
\begin{enumerate}[a)]
\item For $n=2$ the (exceptional) facet at $v_{1}$ is pure hypermetric\\ with  $\sum_{i=1}^n b_{i} = 0$.
\item For $3 \leq n \leq 5$ the facets at $v_{1}$ are pure hypermetric with $\sum_{i=1}^n b_{i} = 1$.
\item For $ n = 6$ the facets at $v_{1}$ are hypermetric with $\sum_{i=1}^n b_{i} = 1$.\\ Some are not pure.
\end{enumerate}
\end{proposition}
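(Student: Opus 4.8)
The plan is to reduce everything to a finite inspection of the facet representations of $\tcf_n$, combined with Lemma~\ref{lemma:hypequalityat1}. First I would record two trivial observations. The all‑ones point $v_1=(1,\dots,1)$ is the clique partition point $\gamma(\{V_n\})$ of the one‑block partition, so $v_1\in\Ex(\tcf_n)$ by Proposition~\ref{prop:cpp}; and a facet inducing inequality $a_0+\sum_{i<j}a_{ij}\chi_{ij}\ge0$ of $\tcf_n$ is a facet at $v_1$ exactly when $v_1$ satisfies it with equality. Since every node permutation $\sigma\in S_n$ fixes $v_1$ (all its coordinates coincide), the property ``tight at $v_1$'' is constant on each permutation orbit of facets, and the properties ``hypermetric'' and ``pure hypermetric'' as well as the value $\sum_i b_i$ are likewise orbit invariants (a permutation of a $b$-vector preserves $\{-1,0,1\}^n$-membership and the coordinate sum). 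Hence it suffices to run through the orbit representatives listed in Tables~\ref{table:hypermetricTCFfacets}, \ref{table:TCFsixfacets} and \ref{table:hypermetricTCFsixfacets}.

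For $n\le5$, Proposition~\ref{prop:nleq5hypermetric} guarantees that every facet of $\tcf_n$ is hypermetric, given by some $b\in\ZZ^n$, and Lemma~\ref{lemma:hypequalityat1} says it is a facet at $v_1$ iff $\sum_i b_i\in\{0,1\}$. So I would simply scan the $b$-vectors in Table~\ref{table:hypermetricTCFfacets} and keep those with $\sum_i b_i\in\{0,1\}$. For $n=2$ only the orbit $b=(1,-1)$ survives (the positivity orbit $b=\mathbbm{1}_{\{1,2\}}$ has $\sum_i b_i=2$), and it is pure with $\sum_i b_i=0$, giving part~a). For $3\le n\le5$ one checks from the table that every surviving representative in fact has $\sum_i b_i=1$ (no hypermetric facet with $\sum_i b_i=0$ occurs once $n\ge3$, consistent with the fact noted in the text that $\chi_{12}\le1$ is no longer facet inducing for $n\ge3$) and lies in $\{-1,0,1\}^n$; the triangle‑inequality orbit $b=(1,-1,1,0,\dots,0)$ is the prototype. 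This is part~b).

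For $n=6$ the same strategy applies, but Proposition~\ref{prop:nleq5hypermetric} is unavailable, so the verification must use the explicit list of the $67$ facet orbit representatives of $\tcf_6$ (Table~\ref{table:TCFsixfacets}) together with the precomputed vertex set $\Ex(\tcf_6)$ from Section~\ref{sect:compresults}. I would evaluate each of the $67$ representatives at $v_1$ --- equivalently, read off from the vertex--facet incidence which facets contain the vertex $v_1$ --- and confirm that every facet at $v_1$ falls into one of the $17$ hypermetric orbits of Table~\ref{table:hypermetricTCFsixfacets}, while none of the $50$ non‑hypermetric representatives is tight at $v_1$. Lemma~\ref{lemma:hypequalityat1} then forces $\sum_i b_i\in\{0,1\}$ for the former, Table~\ref{table:hypermetricTCFsixfacets} shows they all satisfy $\sum_i b_i=1$, and finally one points to at least one of these $b$-vectors having an entry with $|b_i|\ge2$, hence not pure; this is part~c). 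The only genuine labour is the bookkeeping: walking through all $67$ representatives for $n=6$, and making sure that for $3\le n\le5$ no hypermetric facet with $\sum_i b_i=0$ was overlooked --- since the geometric substance here is carried entirely by the tables computed earlier, the main obstacle is organizational rather than mathematical.
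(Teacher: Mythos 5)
Your proposal is correct and follows exactly the paper's route: the paper itself derives the proposition by invoking Lemma~\ref{lemma:hypequalityat1} and inspecting Tables~\ref{table:hypermetricTCFfacets}, \ref{table:TCFsixfacets} and \ref{table:hypermetricTCFsixfacets}, which is precisely what you do. Your additional remarks --- that tightness at $v_1$, hypermetricity, purity, and the value $\sum_i b_i$ are all constant on permutation orbits, and that for $n=6$ one must separately check that none of the $50$ non-hypermetric orbit representatives is tight at $v_1$ --- are worthwhile explicit bookkeeping that the paper leaves implicit, but they do not change the argument.
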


\noindent 
Thus, the pure hypermetricity of the hypermetric facets at $v_{1}$ is another low-dimensional phenomenon: For $n=6$ there exist non-pure hypermetric facets at $v_1$. By the lifting property for $n \geq 3$ (Proposition~\ref{prop:zeroliftingfacets}), the same holds true for $n\geq 7$. 
However, we may ask (\cf also Lemma~\ref{lemma:hypequalityat1}):
{\it
\begin{enumerate}[(G)]
\item[(G)] Are there non-hypermetric facets at $v_1$ for $n \geq 7\,$?\\
Are there hypermetric facets at $v_1$ with $\sum_{i=1}^n b_{i} = 0$ for $n \geq 7\,$? 
\end{enumerate}
}

\noindent Let $\tcf^\hy_n(v_0,v_1)$, resp.  $\tcf^\pu_n(v_0,v_1)$, be given by the hypermetric, resp. pure hypermetric, facets at $v_{0}$ and $v_{1}$. 

\begin{remark}\label{remark:polytopv0v1}
By definition 
$\tcf_n(v_0,v_1) \subset \tcf^\hy_n(v_0,v_1) \subset \tcf^\pu_n(v_0,v_1).$ 
All these sets are polytopes, since
positivity (= the facets of $\tcf_n$ at $v_0$) and triangle inequalities (which are certainly among the pure hypermetric facets of $\tcf_n$ at $v_1$ for $n\geq 3$) suffice already to imply  $\tcf^\pu_n(v_0,v_1) \subset [0,1]^{E_n}$ (which also holds true for $n=2$), i.e., all of these sets are bounded and thus indeed polytopes. These polytopes are called {\it spindles}, since each facet contains one of the two vertices $v_{0},v_{1}$.
\end{remark}

\noindent The following proposition collects some partial answers to Question (F).

\begin{proposition}[Partial answers to Question (F)] 
\label{prop:tcfpsd}$\phantom{a}$\\[-6mm]
\begin{enumerate}[a)]
\item For $2 \leq n \leq 5$ we have\\ 
$\tcf_n(v_0,v_1)=\tcf^\hy_n(v_0,v_1)=\tcf^\pu_n(v_0,v_1) \subset \psd_n$.
\item For $n=6$ we have $\tcf_6(v_0,v_1) = \tcf^\hy_6(v_0,v_1) \neq \tcf^\pu_6(v_0,v_1)$. 
\item For $n \geq 6$ we have $\tcf^\hy_n(v_0,v_1) \not \subset \psd_n$.\\ 
In particular $\tcf_6(v_0,v_1) \not\subset \psd_6$. 
\end{enumerate}
\end{proposition}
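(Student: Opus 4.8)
The plan is to reduce everything to the facets of $\tcf_n$ that pass through the two exposed vertices $v_0$ and $v_1$, exploiting the chain
\begin{align*}
\tcf_n \subset \tcf_n(v_0,v_1) \subset \tcf^\hy_n(v_0,v_1) \subset \tcf^\pu_n(v_0,v_1),
\end{align*}
in which each polytope is cut out by a subfamily of the inequalities cutting out the previous one (so fewer constraints, larger polytope). Recall from Lemma~\ref{lemma:hypequalityat1} and the text preceding Proposition~\ref{prop:facets_at1} that the facets at $v_0$ are precisely the positivity facets $\chi_{ij}\geq 0$ (pure hypermetric, $b=\Eins_{\{i,j\}}$), while a hypermetric facet with $b$-vector $b$ passes through $v_1$ exactly when $\sum_i b_i\in\{0,1\}$.

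For part a) the three equalities are bookkeeping on top of the cited propositions: for $2\leq n\leq 5$ every facet of $\tcf_n$ is hypermetric by Proposition~\ref{prop:nleq5hypermetric}, so $\tcf_n(v_0,v_1)=\tcf^\hy_n(v_0,v_1)$; and by Proposition~\ref{prop:facets_at1}~a)--b) the facets at $v_1$ are pure hypermetric, so together with the pure hypermetric positivity facets at $v_0$ \emph{every} facet at $v_0,v_1$ is pure hypermetric, whence $\tcf^\hy_n(v_0,v_1)=\tcf^\pu_n(v_0,v_1)$. For the inclusion $\subset\psd_n$ I would use that this common spindle now has an explicit facet description — positivity together with the (pure hypermetric) $b$-vectors at $v_1$ collected in Table~\ref{table:hypermetricTCFfacets} — enumerate its finitely many vertices from that description, and check termwise that each is a positive semi-definite matrix. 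For $n\leq 4$ this is immediate, since there $\tcf_n=\cpp_n$ has only positivity and triangle facets, all through $v_0$ or $v_1$, so $\tcf_n(v_0,v_1)=\tcf_n\subset\psd_n$; the only genuine verification is $n=5$.

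For part b) the equality $\tcf_6(v_0,v_1)=\tcf^\hy_6(v_0,v_1)$ holds because, by Proposition~\ref{prop:facets_at1}~c), every facet of $\tcf_6$ at $v_1$ is hypermetric and the facets at $v_0$ are the hypermetric positivity facets; hence no non-hypermetric facet of $\tcf_6$ — such as the one in Proposition~\ref{prop:nonhypfacets} — is active at $v_0$ or $v_1$, and the two polytopes are cut out by the same system. For the strictness $\tcf^\hy_6(v_0,v_1)\neq\tcf^\pu_6(v_0,v_1)$, Proposition~\ref{prop:facets_at1}~c) (see also Table~\ref{table:hypermetricTCFsixfacets}) supplies a facet $F$ of $\tcf_6$ at $v_1$ whose $b$-vector is hypermetric but not pure. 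Since $\tcf_6\subset\tcf^\hy_6(v_0,v_1)$ and $F$ is facet-inducing for the full-dimensional polytope $\tcf_6$, the tight face $\tcf^\hy_6(v_0,v_1)\cap\{F\text{ tight}\}$ still has codimension one, so $F$ is a facet of $\tcf^\hy_6(v_0,v_1)$ as well; but the facets of $\tcf^\pu_6(v_0,v_1)$ are, by definition, pure hypermetric, so $F$ cannot be a facet of $\tcf^\pu_6(v_0,v_1)$ and the polytopes must differ. (Equivalently, reading $F$ off the table one can exhibit a vertex of $\tcf^\pu_6(v_0,v_1)$ violating $F$ from the $\mathcal V$-representation computed out of its facet list.)

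For part c) it suffices to produce one point $\chi\in\tcf^\hy_6(v_0,v_1)$ with $\chi\notin\psd_6$: by part b) the same $\chi$ lies in $\tcf_6(v_0,v_1)$, giving the ``in particular'' claim. By part b), $\tcf^\hy_6(v_0,v_1)=\tcf_6(v_0,v_1)$ is an explicit spindle with apices $v_0,v_1$, cut out by positivity and the hypermetric facets of $\tcf_6$ at $v_1$, so I would enumerate its vertices and verify that at least one has a negative eigenvalue; equivalently, one searches directly for a symmetric matrix with unit diagonal and non-negative entries satisfying every triangle inequality and every remaining hypermetric-at-$v_1$ inequality of $\tcf_6$ yet failing positive semi-definiteness. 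For $n\geq 7$ I would transport the $n=6$ witness to a witness for $\tcf^\hy_n(v_0,v_1)$ supported on a six-element subset of $\{1,\dots,n\}$, using that every hypermetric facet of $\tcf_n$ at $v_1$ restricts on such a subset to a hypermetric inequality for the corresponding $\tcf_6$; the delicate point is that the hypermetric facets of $\tcf_n$ genuinely involving the new coordinates must still be obeyed, which is where a careful choice of the six-variable witness (together with the lifting of facets, Proposition~\ref{prop:zeroliftingfacets}, and the zero-lifting of points, Lemma~\ref{lemma:zeroliftingvertices}) has to be checked. The main obstacle is precisely this construction in part c): a non-PSD matrix that nonetheless obeys every hypermetric facet of $\tcf_6$ at $v_0,v_1$ cannot be symmetric in the naive way — a single value on the edges of a $K_{3,3}$, say, fails because the hypermetric partition inequalities at $v_1$ it would have to violate to escape $\psd_6$ are themselves imposed — so the witness is less symmetric and is most safely located by the vertex enumeration of the explicitly tabulated spindle $\tcf_6(v_0,v_1)$; parts a) and b), by contrast, are essentially combinatorial once Propositions~\ref{prop:nleq5hypermetric}, \ref{prop:nonhypfacets} and \ref{prop:facets_at1} together with the one finite positive-semi-definiteness check of a) are in hand.
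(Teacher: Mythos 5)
Your framing via the chain $\tcf_n \subset \tcf_n(v_0,v_1) \subset \tcf^\hy_n(v_0,v_1) \subset \tcf^\pu_n(v_0,v_1)$ and your handling of a) and b) are in the spirit of the paper's proof (equalities read off from Propositions~\ref{prop:nleq5hypermetric} and \ref{prop:facets_at1} plus the facet tables; the inclusion into $\psd_n$ by enumerating the spindle's vertices and checking p.s.d.\ case by case). One slip in a), though: you claim that for $n\leq 4$ one has $\tcf_n(v_0,v_1)=\tcf_n$ because ``$\tcf_n=\cpp_n$ has only positivity and triangle facets, all through $v_0$ or $v_1$.'' That is false for $n=4$: the tetrahedron facet $b=(1,1,1,-1)$ has $\sum b_i=2\notin\{0,1\}$, hence by Lemma~\ref{lemma:hypequalityat1} it does not pass through $v_1$, so $\tcf_4(v_0,v_1)\supsetneq\tcf_4$. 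The inclusion $\tcf_4(v_0,v_1)\subset\psd_4$ is still true but requires the same vertex-enumeration-and-check as $n=5$; it is not immediate.

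The real gap is in c). You propose to locate a non-p.s.d.\ point in the spindle $\tcf^\hy_6(v_0,v_1)$ by computer vertex enumeration of the tabulated polytope, and then to lift it to $n\geq 7$. This lifting step does not obviously close: $\tcf^\hy_n(v_0,v_1)$ is cut out not only by zero-liftings of facets from $n=6$ but also by new hypermetric facets of $\tcf_n$ at $v_1$ whose $b$-vectors genuinely involve the new coordinates, and you would need to verify that the lifted witness satisfies all of those too; Proposition~\ref{prop:zeroliftingfacets} only tells you lifted facets remain facets, not that a lifted \emph{point} obeys the new ones. The paper sidesteps the issue completely by writing down, for each $n\geq 6$, the explicit point $x$ with $x_{i,n}=\tfrac12$ for $1\leq i\leq n-1$ and $x_{ij}=0$ otherwise, computing $bXb^\trans=\sum_{i=1}^{n-1}b_i^2+b_n s$ (with $s=\sum_i b_i$) to see that every hypermetric inequality with $s\in\{0,1\}$ holds, so $x\in\tcf^\hy_n(v_0,v_1)$, and then taking $a=(1,\ldots,1,-2)$ to get $aXa^\trans=5-n<0$. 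Note that this witness is \emph{not} a zero-lifting of the $n=6$ witness (the nonzero star is re-centered at the new vertex each time), and it is symmetric under permutations of $\{1,\ldots,n-1\}$ — so your heuristic that a symmetric ``single value on a bipartite graph'' cannot work points in the wrong direction: the star $K_{1,n-1}$ with value $\tfrac12$ does work, and it makes the argument both uniform in $n$ and free of software. You should replace the vertex-enumeration-plus-lifting plan in c) with such a direct construction.
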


\begin{proof}
\begin{enumerate}[a)]
\item 
The equalities follow from  Table~\ref{table:hypermetricTCFfacets}.  The inclusion  $\tcf^\pu_n(v_0,v_1) \subset \psd_n$ has been solved by hand in \cite{strokorb_13} Proposition~3.6.5. for the cases $n \leq 4$. The idea for $n=4$ was to compute the extremal points of the polytope defined by positivity and triangle inequalities and to check p.s.d.\ for them. This suffices since $\psd_{n}$ is convex. For $n=5$ we used \texttt{polymake} to compute the extremal points of the polytope defined by positivity, triangle and pentagonal inequalities (see Table \ref{table:hypermetricTCFfacets}),  and \texttt{R}  to check p.s.d.
\item This follows from Proposition \ref{prop:facets_at1}.
\item  For $n\geq6$ consider the point $x \in \RR^{E_n}$ with $x_{in}=0.5$, $1 \leq i \leq n-1$; $x_{ij}=0$ otherwise. Let $X$ denote the associated matrix. 
For $b=(b_{1},\ldots,b_{n}) \in \ZZ^n$ and $s := \sum _{i=1}^n b_{i}$ we have 
\begin{align*}
bXb^t = \sum_{i=1}^n b_{i}^2 + b_{n}\sum_{i=1}^{n-1}b_{i} = \sum_{i=1}^{n-1}b_{i}^{2} + b_{n}\cdot s.
\end{align*}
This shows $bXb^t \geq s$ for $s\in\{0,1\}$, for $s=1$ use $\sum_{i=1}^{n-1}b_{i}^{2} + b_{n} \geq \sum_{i=1}^{n-1}b_{i} + b_{n} = 1$. Thus, {all} hypermetric inequalities with $\sum_{i=1}^n b_{i} \in \{0,1\}$ are satisfied, in particular those at $v_{1}$ (c.f. Lemma \ref{lemma:hypequalityat1}), and $x$ is non-negative.  On the other hand, $x$ is not positive semi-definite: For $a=(1,\ldots,1,-2)$ the above formula shows $aXa^t = (n-1) - 2(n-3)=5-n \leq -1$.
\end{enumerate}
\end{proof}
Thus, we expect ``if and only if $n \leq 5$'' to be the answer to Question F.\\

\noindent Our final question is motivated by the following observation. 
Let $\hyp_{n}$ denoted the set of points $x=(x_{ij})_{1\leq i < j \leq n} \in \RR^{E_n}$ that satisfy all hypermetric inequalities (cf.\ Section~\ref{sect:basicfacts}).
\begin{lemma} \label{lemma:tcfhyp} 
For all $n \geq 2$ the inclusions $\tcf_n \subset \hyp_n \subset \psd_n$ hold.
\end{lemma}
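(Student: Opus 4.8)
The plan is to treat the two inclusions separately. For $\tcf_n \subset \hyp_n$ there is essentially nothing to do: Lemma~\ref{lemma:hypervalid} asserts that every hypermetric inequality in the sense of \eqref{eq:hypermetric} is valid on all of $\tcf_n$, while $\hyp_n$ is, by definition, precisely the set of those $x \in \RR^{E_n}$ that satisfy every hypermetric inequality. So I would simply invoke Lemma~\ref{lemma:hypervalid} and the definition of $\hyp_n$.

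For the inclusion $\hyp_n \subset \psd_n$ the idea is to read a single hypermetric inequality as a lower bound on a quadratic form and then pass from integer to real test vectors. Given $x \in \hyp_n$, let $X$ be the associated symmetric $n \times n$ matrix with $X_{ii}=1$; in matrix form the hypermetric inequality defined by $b \in \ZZ^n$ says $b^{\trans} X b \geq \sum_{i=1}^n b_i$. First I would apply this to both $b$ and $-b$: since $(-b)^{\trans} X (-b) = b^{\trans} X b$ whereas the two right-hand sides are $\sum_i b_i$ and $-\sum_i b_i$, this yields $b^{\trans} X b \geq \lvert \sum_i b_i \rvert \geq 0$ for every $b \in \ZZ^n$. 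Second, I would extend this to rational vectors by homogeneity: any $a \in \QQ^n$ can be written as $a = b/N$ with $N \in \NN$, $b \in \ZZ^n$, so $a^{\trans} X a = N^{-2}\, b^{\trans} X b \geq 0$. Third, since $a \mapsto a^{\trans} X a$ is continuous and $\QQ^n$ is dense in $\RR^n$, the inequality $a^{\trans} X a \geq 0$ persists for all $a \in \RR^n$; hence $X$ is positive semi-definite and $x \in \psd_n$.

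I do not expect a genuine obstacle here. The only step deserving a word of care is that the defining hypermetric inequality gives only $b^{\trans} X b \geq \sum_i b_i$, and the right-hand side can be negative, so non-negativity of the quadratic form is obtained only after also invoking the inequality for $-b$; everything else is routine density/scaling and the closedness of $\psd_n$.
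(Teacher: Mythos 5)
Your proof is correct and follows essentially the same route as the paper: the first inclusion is read off from Lemma~\ref{lemma:hypervalid}, and the second is obtained by applying the hypermetric inequality to both $b$ and $-b$ to deduce $b^{\trans} X b \geq 0$ on $\ZZ^n$, then extending to $\QQ^n$ by scaling and to $\RR^n$ by continuity. One small wording slip: your closing remark attributes the final step to ``the closedness of $\psd_n$,'' but what you actually use is continuity of $a \mapsto a^{\trans} X a$, exactly as in the paper.
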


\begin{proof}
The first inclusion is a reformulation of Lemma \ref{lemma:hypervalid}. Now let $x \in \hyp_n$. By assumption we have  $\sum_{1\leq i,j \leq n}b_{i}b_{j}x_{ij} \geq \sum_{i=1}^n b_{i}$  for all $b =(b_{1},\ldots,b_{n}) \in \ZZ^n$. This holds for $b$ and $-b$. Thus, $\sum_{1\leq i,j \leq n}b_{i}b_{j}x_{i,j} \geq 0$ for all $b  \in \ZZ^n$. Division by integers extends this to $\QQ^n$, and continuity to $\RR^n$.
\end{proof}

\begin{remark}
By Proposition \ref{prop:nleq5hypermetric} the sets $\tcf_{n}$ and $\hyp_{n}$ even coincide for $n \leq 5$. This is no longer true for $n\geq 6$. A point $x \in \hyp_{6} \setminus \tcf_{6}$ is given by $x_{i,6} = 1/2, 1 \leq i \leq 5$, $x_{1,2}=x_{2,3}=x_{3,4}=x_{4,5}=x_{1,5}= 1/2$, and $x_{i,j} = 0$ otherwise. Indeed, since $(\sum_{i=1}^{5} x_{i,6}) - x_{1,3} - x_{3,5}-x_{2,5}-x_{2,4}-x_{1,4} = 2.5 > 2$, the point $x$ does not satisfy a permutation of the $\tcf_{6}$-facet from Proposition~\ref{prop:nonhypfacets}. We omit the computations showing  $x \in \hyp_{6}$. By lifting, this extends to examples $x^0 \in \hyp_{n} \setminus \tcf_{n}$ for all $n\geq6$.
\end{remark}

{\it
\begin{enumerate}[(H)]
\item Do the inequalities of all hypermetric facets of $\tcf_n$  define a polytope, say $\tcf_{n}^{\hy}$, already contained in $\psd_n$?
\end{enumerate}
}

\noindent This holds true for $n \leq 5$ by Proposition \ref{prop:tcfpsd}, and remains open for $n \geq 6$. Note that $\tcf_{n}^{\hy}$ is a polytope by $\tcf_{n}^{hyp} \subset \tcf_{n}^{hyp}(v_{0},v_{1})$ and Remark \ref{remark:polytopv0v1}.


\section*{Discussion}\label{sect:discussion}

In this article, we deal with the realization problem for the tail correlation function (TCF), which is an omnipresent bivariate tail dependence measure in the extremes literature. 
We make this specific by formulating Questions~(A)-(E) in the introduction.
Here, we discuss our contribution to these questions.
In doing so we address Questions~(A)-(E) partially in reversed order according to their growing complexity.

Questions~(E) and (D) can be answered fully and affirmatively by  
Corollary~\ref{cor:basicoperations} and Theorem~\ref{thm:TCFisMAX}, respectively. That is, convex combinations, products and pointwise limits are admissible operations on the set of TCFs and Theorem~\ref{thm:TCFisMAX} shows that the class of TM processes, a subclass of max-stable processes, is rich enough to realize any given TCF. Concerning the regularity of the corresponding TM process, we identify continuity of its TCF as a necessary and sufficient condition for its stochastic continuity (Corollary~\ref{cor:chicty}), which contributes to Question~(C). Theorem~\ref{thm:TCFisMAX} also opens up links to binary ($\{0,1\}$-valued) processes and thereby provides a substantial reduction of Questions~(A) and (B). Corollary~\ref{cor:TCFextension} reduces them even further to the study of TCFs on finite base spaces. Together with Corollary~\ref{cor:finitepolytope}, we reveal that membership in the set of TCFs (even on infinite spaces) can be completely characterized by a system of affine inequalities, which -- if known -- would provide a complete answer to Question~(A). 

To identify and classify these affine inequalities, a better understanding of the geometry of the polytope $\tcf_n$ of $n\times n$ tail correlation functions (matrices) for arbitrary $n$ is needed. Its facet inducing inequalities constitute such a list (actually, an $\mathcal{H}$-representation would suffice already).  
Lemma~\ref{lemma:hypervalid} contributes to Question~(A) in that it provides a rich class of necessary conditions (all hypermetric inequalities) for membership in $\tcf_n$, whereas Proposition~\ref{prop:cpp} identifies any clique partition point to be an admissible TCF. In Section~\ref{sect:corcut}, we discuss that the polytope $\tcf_n$ can be viewed either as an affine \emph{projection} of the polytope $\Theta_n$ (whose facets are well-understood) or as an affine \emph{intersection} with the \emph{correlation polytope} (whose vertices are well-understood). Both views immediately suggest algorithms that can be easily implemented in order to obtain the vertices and facets of $\tcf_n$ that in theory would work ``for arbitrary $n$''. This would solve Question~(A) computationally.  
Due to the complexity of the problem, software computations lead to a full description of facets and vertices of $\tcf_n$ only up to $n=6$ (Section~\ref{sect:compresults}).

Indeed, several of our results reveal the rapidly growing complexity of Question~(A) as $n$ grows. 
Starting from $n=3$, no facet inducing inequality of $\tcf_n$ will ever become obsolete (Proposition~\ref{prop:zeroliftingfacets}). 
For instance, the triangle inequality $(\ref{eqn:triangle})$ cannot be deduced from any other set of valid inequalities for $\tcf_n$.
By contrast, \emph{all} facet inducing inequalities that define the polytope of ECFs $\Theta_{n}$ become obsolete for $\Theta_{n'}$ for higher $n'>n$, 
and still $\Theta_n$ has $2^n$ facets in dimension $n$.
Starting from $n \geq 6$ there exist (actually plenty of) non-hypermetric facets of $\tcf_{n}$ (Proposition~\ref{prop:nonhypfacets}).
Moreover, we derived the facets of $\tcf_6$ from the facets of the cut polytope $\cut_7$ which had 11 generators for 116$\phantom{.}$764 facets. The next step would take into account the polytope $\cut_{8}$, which has already more than 217 million facets which can be subdivided into 147 orbits under permutations and switchings \citep[p.~505]{dezalaurent_97}.  
It is even possible to choose $n$ sufficiently large, such that a given finite set of rational numbers from the interval $[0,1]$ turns up as coordinate values of a single vertex of $\tcf_n$ (Proposition~\ref{prop:unbounded}).
Altogether, these results confound the aim of a full answer to Question~(A).

Finally, if Question~(A) is already so difficult to answer, what more can be eventually said about Question~(B)? That is, given a TCF $\chi$, say on a finite space, how to construct a specific stochastic model that realizes $\chi$? Again, from our dual views on $\tcf_n$ as affine ``projection of''  or ``intersection with'' other polytopes, it is easy to formulate naive ad-hoc algorithms providing an entire convex polytope of solutions to such a problem, cf.\ \cite{strokorb_13}, p.~65. Perhaps more interestingly, in case of  $T=\RR^d$, \cite{strokorbballanischlather_15} characterize subclasses of radially symmetric and monotonously decreasing TCFs with some sharp bounds on membership in the class of TCFs on $\RR^d$ (cf.\ Table 2 therein) and recover realizing max-stable models. Surprisingly often, it is possible to obtain explicitly several such realizing models sharing an identical TCF, but with rather different spectral profiles. 
In this sense, the reader should not overrate the finding of a specific model meeting a given TCF even though the TM models helped us here to approach the realization problem.

To conclude with, 
independently of our research \citep{fss_14} and motivated from an insurance context, \cite{ehw15} dealt with almost the same questions (in particular Questions A and B) for random vectors with an emphasis on the construction of realizing copulas as we learned on the EVA 2015 in AnnArbor. 
Our approach offers (at least theoretically) an algorithm that can solve Questions A and B for random vectors completely (even though the feasibilty of such an algorithm breaks down very quickly as the dimension grows and we have doubts on its practical use in higher dimensions). This answers one of the questions raised in the discussion of \cite{ehw15}.

\paragraph{Acknowledgements}
The authors would like to thank two referees and an AE who provided many thoughtful comments leading to substantial improvements in the presentation of this material. We are also thankful to be made aware of the regularity question and the  early works of Shepp on unit covariances.
K.~Strokorb undertook part of this work as part of her PhD thesis \cite{strokorb_13} as a member of the Research Training Group 1023 and gratefully acknowledges financial support by the German Research Foundation DFG. 

{\small
\bibliographystyle{spbasic}      
\bibliography{lit.bib}           
}


\newpage
\appendix
\section{Tables}\label{sect:tables}

\begin{table}[h]\footnotesize \centering
\setlength{\tabcolsep}{1.1mm}
\begin{tabular}{lp{2mm}rrrrrp{2mm}rrrrr} 
\toprule
\multicolumn{13}{c}{\textbf{Vertex and facet counts}}\\
\midrule
&& \multicolumn{5}{c}{$\tcf_n$} && \multicolumn{5}{c}{$\Theta_n$} \\
\cmidrule{3-7} \cmidrule{9-13} 
$n$ & &2&3&4&5&6 & &2&3&4&5&6 \\
\cmidrule{1-1} \cmidrule{3-7} \cmidrule{9-13} 
$\# $ vertices 
& &2&5&15&214&28$\phantom{.}$895 
& &2&6&42&1292&200$\phantom{.}$214 \\
$\# $ facets 
& &2&6&22&110&18$\phantom{.}$720 
& &2&7&15&31&63 \\ 
$\# $ permutation orbits of vertices 
& &2&3&5&11&88 
& &2&4&10&45&583 \\ 
$\# $ permutation orbits of facets 
& &2&2&3&7&67 
& &2&3&4&5&6 \\ 
\midrule
&& \multicolumn{5}{c}{$\cor_n$} && \multicolumn{5}{c}{$\cut_{n+1}$} \\
\cmidrule{3-7} \cmidrule{9-13} 
$n$ & &2&3&4&5&6 & &2&3&4&5&6  \\
\cmidrule{1-1} \cmidrule{3-7} \cmidrule{9-13} 
$\# $ vertices 
&  &4&8&16&32&64 
&  &4&8&16&32&64\\ 
$\# $ facets 
& &4&16&56&368&116$\phantom{.}$764 
& &4&16&56&368&116$\phantom{.}$764 \\ 
$\# $ permutation orbits of vertices 
& &3&4&5&6&7 
& &2&3&3&4&4\\ 
$\# $ permutation orbits of facets 
& &3&5&10&29&428 
& &2&2&5&11&108 \\ 
$\# $ perm./switch. orbits of vertices 
&&&&&& & &1&1&1&1&1\\ 
$\# $ permu./switch. orbits of facets 
&&&&&& & &1&1&2&3&11 \\ 
\bottomrule
\end{tabular}
\caption{Vertex and facet counts for the polytope of tail correlation functions $\tcf_{n} \subset \RR^{\binom{n}{2}}$, the polytope of extremal coefficient functions $\Theta_n \subset \RR^{2^n-n-1}$, the correlation polytope $\cor_n \subset \RR^{n+\binom{n}{2}}$ and the cut polytope $\cut_n \subset \RR^{\binom{n+1}{2}}$. 
For $\Theta_n$ the number of facets ($2^n-1$) and orbits of facets ($n$) follow from Lemma \ref{lemma:CA_finite}. Since $\cor_n$ and $\cut_{n+1}$ are linearly equivalent, they have the same number of vertices ($2^n$ by definition) and facets (see \cite{dezalaurent_97} p.~503-505 for the respective numbers as well as for the permutation/switching orbits of $\cut_{n+1}$). All other numbers rely on computations using the software \texttt{R} and \texttt{polymake}. The counts for $\tcf_n$ and $\Theta_n$ in case $n\leq 4$ have been obtained previously ``by hand'' in \cite{strokorb_13} p.~62-63.
\label{table:counts}
}

\end{table}

\begin{table}[p]\footnotesize \centering
\begin{tabular}{cccccccccc|c|l}
\toprule
\multicolumn{12}{c}{\textbf{Vertices of $\tcf_5$}}  \\
\midrule
\multicolumn{12}{l}{7 $\{0,1\}$-valued representatives}  \\
\midrule
0 & 0 & 0 & 0 & 0 & 0 & 0 & 0 & 0 & 0 & 1 & five 1-cliques\\
0 & 0 & 0 & 0 & 0 & 0 & 0 & 0 & 0 & 1 & 10 & one 2-clique\\
0 & 0 & 0 & 0 & 0 & 0 & 0 & 1 & 1 & 1 & 10 & one 3-clique\\
0 & 0 & 0 & 0 & 0 & 0 & 1 & 1 & 0 & 0 & 15 & two 2-cliques\\
0 & 0 & 0 & 0 & 1 & 1 & 1 & 1 & 1 & 1 & 5 & one 4-clique \\
0 & 0 & 0 & 1 & 1 & 1 & 0 & 1 & 0 & 0 & 10 & one 2-clique and one 3-clique\\
1 & 1 & 1 & 1 & 1 & 1 & 1 & 1 & 1 & 1 & 1 & one 5-clique \\
\midrule
\multicolumn{12}{l}{4 $\{0,\OH\}$-valued representatives}  \\
\midrule
0 & 0 & \OH & \OH & \OH & \OH & \OH & \OH & \OH & \OH & 30 &\\ 
0 & 0 & \OH & \OH & \OH & 0 & \OH & \OH & \OH & \OH & 60 &\\ 
0 & 0 & 0 & \OH & \OH & \OH & 0 & \OH & \OH & \OH & 60 &\\ 
\OH & 0 & 0 & \OH & \OH & 0 & 0 & \OH & 0 & \OH & 12 &\\ 
\bottomrule
\end{tabular}
\caption{
The 11 representatives $(\chi_{1,2},\ldots,\chi_{4,5})$ for the 214 elements of $\Ex(\tcf_{5})$.
Columns (1)-(10) list the coordinates $\chi_{1,2},\ldots,\chi_{4,5}$, Column (11) gives the orbit length under permutations and the last column is a comment on the generating clique partition (see Section~\ref{sect:basicfacts}).
\label{table:TCFfivevertices}}
\vspace{10mm}
\setlength{\tabcolsep}{2mm}
\begin{tabular}{llll|r|l}
\toprule
\multicolumn{6}{c}{\textbf{Facets of $\tcf_n$  for $2\leq n \leq 5$}}  \\
\midrule
$n=2$ & $b=(1,1)$ & \emph{positivity inequality} $x_{1,2}\geq 0$ & (\ol $\phantom{0}$1)  & 2 facets &  $v_0$\\
& $b=(1,-1)$ & $x_{1,2}\leq 1$ (this facet disappears for $n \geq 3$) & (\ol $\phantom{0}$1) && $v_1$\\
\midrule
$n=3$ & $b=(1,1,0)$ & lifting of \emph{positivity ineq.} & (\ol $\phantom{0}$3) & 6 facets & $v_0$\\
& $b=(1,1,-1)$ & \emph{triangle inequality} &  (\ol $\phantom{0}$3) && $v_1$\\
\midrule
$n=4$ & $b=(1,1,0,0)$ & lifting of \emph{positivity ineq.} & (\ol $\phantom{0}$6) & 22 facets & $v_0$ \\
& $b=(1,1,-1,0)$ & lifting of  \emph{triangle ineq.} & (\ol $12$) &&  $v_1$\\
& $b=(1,1,1,-1)$ & \emph{tetrahedron inequality} & (\ol $\phantom{0}$4) && \\
\midrule
$n=5$ & $b=(1,1,0,0,0)$ & lifting of \emph{positivity ineq.} &  (\ol $10$) & 110 facets & $v_0$\\
& $b=(1,1,-1,0,0)$ & lifting of \emph{triangle ineq.} &  (\ol $30$) && $v_1$\\
& $b=(1,1,1,-1,0)$ & lifting of \emph{tetrahedron ineq.} & (\ol $20$) &&  \\
& $b=(1,1,1,1,-1)$ & \emph{pyramid inequality} & (\ol $\phantom{0}$5) && \\
& $b=(1,1,1,1,-2)$ & \emph{2-weighted variant of pyramid ineq.} & (\ol $\phantom{0}$5) && \\
& $b=(1,1,1,-1,-1)$ & \emph{pentagonal inequality} &  (\ol $10$) && $v_1$\\
& $b=(2,1,1,-1,-1)$ & \emph{2-weighted variant of pentagonal ineq.} & (\ol $30$) && \\
\bottomrule
\end{tabular}
\caption{
Permutation orbit representatives for the facets of $\tcf_n$ for $2 \leq n \leq 5$. Since all facets are hypermetric, they can be described by their corresponding $b$-vector (see Section~\ref{sect:basicfacts}). The number in brackets is the orbit length.  The last column indicates whether the respective facet contains one of the exposed vertices $v_0=(0,0,\dots,0)$ or $v_1=(1,1,\dots,1)$.
\label{table:hypermetricTCFfacets}}
\end{table}

\def\a{0.6 mm} 
\def\b{0.0 mm}

\setlength{\tabcolsep}{0.5mm}
\begin{table}[p]\scriptsize
\begin{tabular}{ccccccccccccccc|c}
\toprule
\multicolumn{16}{l}{\textbf{\scriptsize Vertices of $\tcf_6$}}  \\[\b]
\midrule
\multicolumn{16}{l}{\scriptsize 7 $\{0,1\}$-vd.\ repr'tives (liftings from $\tcf_{5}$)}  \\[\b]
\midrule
0 & 0 & 0 & 0 & 0 & 0 & 0 & 0 & 0 & 0 & 0 & 0 & 0 & 0 & 0 & 1\\[\b]
0 & 0 & 0 & 0 & 0 & 0 & 0 & 0 & 0 & 0 & 0 & 0 & 0 & 0 & 1 & 15\\[\b]
0 & 0 & 0 & 0 & 0 & 0 & 0 & 0 & 0 & 0 & 0 & 0 & 1 & 1 & 1 & 20\\[\b]
0 & 0 & 0 & 0 & 0 & 0 & 0 & 0 & 0 & 0 & 0 & 1 & 1 & 0 & 0 & 45\\[\b]
0 & 0 & 0 & 0 & 0 & 0 & 0 & 0 & 0 & 1 & 1 & 1 & 1 & 1 & 1 & 15\\[\b]
0 & 0 & 0 & 0 & 0 & 0 & 0 & 0 & 1 & 1 & 1 & 0 & 1 & 0 & 0 & 60\\[\b]
0 & 0 & 0 & 0 & 0 & 1 & 1 & 1 & 1 & 1 & 1 & 1 & 1 & 1 & 1 & 6\\[\b]
\midrule
\multicolumn{16}{l}{\scriptsize 4 new $\{0,1\}$-vd.\ repr'tives (not liftings)}  \\[\b]
\midrule
0 & 0 & 0 & 0 & 1 & 0 & 0 & 1 & 0 & 1 & 0 & 0 & 0 & 0 & 0 & 15\\[\b]				
0 & 0 & 0 & 0 & 1 & 1 & 1 & 1 & 0 & 1 & 1 & 0 & 1 & 0 & 0 & 15\\[\b]
0 & 0 & 0 & 1 & 1 & 1 & 1 & 0 & 0 & 1 & 0 & 0 & 0 & 0 & 1 & 10\\[\b]
1 & 1 & 1 & 1 & 1 & 1 & 1 & 1 & 1 & 1 & 1 & 1 & 1 & 1 & 1 & 1\\[\b]
\midrule
\multicolumn{16}{l}{4 $\{0,\OH\}$-vd.\ repr'tives (liftings from $\tcf_{5}$)}  \\[\b]
\midrule
0 & 0 & 0 & 0 & 0 & 0 & 0 & 0 & \OH & \OH & \OH & 0 & \OH & \OH & \OH & 360\\[\a]
0 & 0 & 0 & 0 & 0 & 0 & 0 & \OH & \OH & \OH & 0 & \OH & \OH & 0 & 0 & 72\\[\a]
0 & 0 & 0 & 0 & 0 & 0 & 0 & \OH & \OH & \OH & 0 & \OH & \OH & \OH & \OH & 360\\[\a]
0 & 0 & 0 & 0 & 0 & 0 & 0 & \OH & \OH & \OH & \OH & \OH & \OH & \OH & \OH & 180\\[\a]
\midrule
\multicolumn{16}{l}{\scriptsize 12 new $\{0,\OH\}$-vd.\ repr'tives (not liftings)}  \\[\b]
\midrule
0 & 0 & 0 & 0 & \OH & 0 & \OH & \OH & 0 & \OH & \OH & \OH & \OH & 0 & 0 & 360\\[\a]
0 & 0 & 0 & 0 & \OH & 0 & \OH & \OH & 0 & \OH & \OH & \OH & \OH & 0 & \OH & 720\\[\a]
0 & 0 & 0 & 0 & \OH & 0 & \OH & \OH & 0 & \OH & \OH & \OH & \OH & \OH & \OH & 360\\[\a]
0 & 0 & 0 & \OH & \OH & 0 & \OH & 0 & \OH & \OH & \OH & 0 & 0 & 0 & \OH & 360\\[\a]
0 & 0 & 0 & \OH & \OH & 0 & \OH & 0 & \OH & \OH & \OH & 0 & \OH & \OH & \OH & 120\\[\a]
0 & 0 & 0 & \OH & \OH & \OH & \OH & 0 & 0 & \OH & 0 & 0 & \OH & \OH & \OH & 180\\[\a]
0 & 0 & 0 & \OH & \OH & \OH & \OH & 0 & 0 & \OH & \OH & \OH & \OH & \OH & \OH & 90\\[\a]
0 & 0 & 0 & \OH & \OH & \OH & \OH & 0 & \OH & \OH & 0 & \OH & \OH & \OH & \OH & 360\\[\a]
0 & 0 & 0 & \OH & \OH & \OH & \OH & 0 & \OH & \OH & \OH & 0 & \OH & \OH & 0 & 360\\[\a]
0 & 0 & 0 & \OH & \OH & \OH & \OH & 0 & \OH & \OH & \OH & 0 & \OH & \OH & \OH & 360\\[\a]
0 & 0 & 0 & \OH & \OH & \OH & \OH & \OH & \OH & \OH & \OH & \OH & \OH & \OH & \OH & 60\\[\a]
0 & 0 & \OH & \OH & \OH & \OH & 0 & \OH & \OH & \OH & 0 & \OH & \OH & \OH & \OH & 360\\[\a]	
\midrule
\multicolumn{16}{l}{\scriptsize 11 new $\{0,\OH,1\}$-vd.\ repr'tives (not liftings)}  \\[\b]
\midrule
0 & 0 & 0 & 0 & \OH & 1 & \OH & \OH & 0 & \OH & \OH & 0 & \OH & \OH & \OH & 180\\[\a]	
0 & 0 & 0 & 0 & \OH & 1 & \OH & \OH & \OH & \OH & \OH & \OH & \OH & 0 & \OH & 360\\[\a]
0 & 0 & 0 & 1 & \OH & \OH & \OH & 0 & 0 & \OH & 0 & \OH & 0 & \OH & \OH & 180\\[\a]
0 & 0 & 0 & \OH & \OH & 1 & \OH & 0 & \OH & \OH & 0 & \OH & \OH & 0 & 0 & 180\\[\a]
0 & 0 & 0 & \OH & \OH & 1 & \OH & 0 & \OH & \OH & 0 & \OH & \OH & \OH & \OH & 360\\[\a]
0 & 0 & 0 & \OH & \OH & 1 & \OH & \OH & \OH & \OH & \OH & \OH & 0 & 0 & \OH & 180\\[\a]
0 & 0 & 0 & \OH & \OH & 1 & \OH & \OH & \OH & \OH & \OH & \OH & 0 & \OH & \OH & 360\\[\a]
0 & 0 & 0 & \OH & \OH & 1 & \OH & \OH & \OH & \OH & \OH & \OH & \OH & \OH & \OH & 180\\[\a]
0 & 0 & 1 & \OH & \OH & \OH & 0 & \OH & \OH & 0 & \OH & \OH & \OH & \OH & \OH & 90\\[\a]
0 & 0 & \OH & \OH & \OH & \OH & 0 & \OH & \OH & \OH & \OH & \OH & \OH & \OH & 1 & 180\\[\a]
0 & 0 & \OH & \OH & \OH & \OH & \OH & \OH & \OH & \OH & \OH & \OH & 1 & \OH & \OH & 180\\[\a]
\midrule
\multicolumn{16}{l}{\scriptsize 50 new $\{0,\OT,\TT\}$-vd.\ repr'tives (not liftings)}  \\[\b]
\midrule
0 & 0 & 0 & 0 & \OT & \OT & \OT & \OT & 0 & \TT & \TT & \TT & \TT & \TT & \TT & 120\\[\a]
\bottomrule
\end{tabular}
\hspace{1.2mm}
\begin{tabular}{ccccccccccccccc|c}
\toprule
0 & 0 & 0 & 0 & \OT & \OT & \OT & \OT & \TT & \TT & \TT & 0 & \TT & \OT & \OT & 360\\[\a]
0 & 0 & 0 & 0 & \OT & \TT & \TT & \TT & \OT & \TT & \TT & \OT & \TT & \TT & \TT & 180\\[\a]
0 & 0 & 0 & 0 & \TT & \OT & \OT & \OT & \OT & \TT & \TT & 0 & \TT & 0 & \OT & 360\\[\a]
0 & 0 & 0 & 0 & \TT & \TT & \TT & \TT & 0 & \TT & \TT & \OT & \TT & \OT & \OT & 120\\[\a]
0 & 0 & 0 & \OT & \OT & \OT & \OT & 0 & \TT & \TT & \TT & \OT & \TT & \OT & \OT & 360\\[\a]
0 & 0 & 0 & \OT & \OT & \OT & \OT & \TT & \TT & \TT & 0 & \OT & \OT & \OT & \TT & 720\\[\a]
0 & 0 & 0 & \OT & \OT & \TT & \TT & 0 & \TT & \TT & \OT & \TT & \OT & \TT & \OT & 360\\[\a]
0 & 0 & 0 & \OT & \OT & \TT & \TT & \OT & \OT & \TT & \OT & \TT & \TT & \TT & \TT & 720\\[\a]
0 & 0 & 0 & \OT & \OT & \TT & \TT & \OT & \TT & \TT & \OT & \TT & \TT & \OT & \OT & 360\\[\a]
0 & 0 & 0 & \OT & \TT & \OT & \OT & 0 & \OT & \TT & \TT & 0 & \TT & 0 & 0 & 360\\[\a]
0 & 0 & 0 & \OT & \TT & \OT & \OT & \TT & \OT & \TT & 0 & 0 & \OT & 0 & \OT & 360\\[\a]
0 & 0 & 0 & \OT & \TT & \OT & \OT & \TT & \OT & \TT & \OT & 0 & \OT & \OT & \OT & 720\\[\a]
0 & 0 & 0 & \OT & \TT & \TT & \TT & 0 & 0 & \TT & \OT & 0 & \OT & \OT & \OT & 720\\[\a]
0 & 0 & 0 & \OT & \TT & \TT & \TT & \OT & 0 & \TT & \OT & \OT & \TT & \OT & \OT & 720\\[\a]
0 & 0 & 0 & \OT & \TT & \TT & \TT & \OT & \OT & \TT & \OT & \OT & \TT & 0 & 0 & 360\\[\a]
0 & 0 & 0 & \OT & \TT & \TT & \TT & \OT & \OT & \TT & \TT & \OT & \TT & \OT & \OT & 360\\[\a]
0 & 0 & 0 & \TT & \TT & \TT & \TT & 0 & \OT & \TT & \OT & \OT & \OT & \OT & \TT & 360\\[\a]
0 & 0 & \OT & \OT & \OT & \OT & 0 & \OT & \OT & \TT & \TT & \TT & \TT & \TT & \TT & 180\\[\a]
0 & 0 & \OT & \OT & \OT & \OT & 0 & \TT & \TT & \TT & \OT & \OT & \OT & \OT & \TT & 360\\[\a]
0 & 0 & \OT & \OT & \OT & \TT & 0 & \OT & \TT & \OT & \OT & \TT & \TT & \OT & \OT & 720\\[\a]
0 & 0 & \OT & \OT & \OT & \TT & \OT & \OT & \OT & \OT & \TT & \TT & \TT & \TT & \TT & 360\\[\a]
0 & 0 & \OT & \OT & \OT & \TT & \OT & \OT & \TT & \OT & \OT & \TT & \TT & 0 & 0 & 180\\[\a]
0 & 0 & \OT & \OT & \OT & \TT & \OT & \OT & \TT & \OT & \OT & \TT & \TT & 0 & \OT & 360\\[\a]
0 & 0 & \OT & \OT & \OT & \TT & \OT & \OT & \TT & \OT & \TT & \OT & \TT & \TT & \OT & 360\\[\a]
0 & 0 & \OT & \OT & \OT & \TT & \OT & \OT & \TT & \OT & \TT & \TT & \TT & \OT & \TT & 720\\[\a]
0 & 0 & \OT & \OT & \OT & \TT & \OT & \OT & \TT & \TT & \TT & \TT & \TT & \TT & \TT & 360\\[\a]
0 & 0 & \OT & \OT & \OT & \TT & \OT & \TT & \TT & \OT & \TT & \TT & 0 & \OT & \TT & 360\\[\a]
0 & 0 & \OT & \OT & \OT & \TT & \OT & \TT & \TT & \TT & \OT & \TT & \OT & \TT & \TT & 360\\[\a]
0 & 0 & \OT & \OT & \OT & \TT & \TT & \TT & \TT & \TT & \TT & \TT & \TT & \TT & \TT & 60\\[\a]
0 & 0 & \OT & \OT & \TT & \OT & 0 & \TT & \OT & \TT & 0 & \OT & 0 & \OT & \OT & 360\\[\a]
0 & 0 & \OT & \OT & \TT & \OT & \OT & \OT & \OT & \TT & \TT & \OT & \TT & \OT & \OT & 360\\[\a]
0 & 0 & \OT & \OT & \TT & \TT & \OT & \OT & 0 & \OT & \OT & \OT & \TT & \OT & \TT & 720\\[\a]
0 & 0 & \OT & \OT & \TT & \TT & \OT & \OT & 0 & \OT & \OT & \OT & \TT & \TT & \TT & 360\\[\a]
0 & 0 & \OT & \OT & \TT & \TT & \OT & \OT & \OT & \OT & \TT & 0 & \TT & \OT & 0 & 720\\[\a]
0 & 0 & \OT & \OT & \TT & \TT & \OT & \OT & \OT & \OT & \TT & \OT & \TT & 0 & \OT & 720\\[\a]
0 & 0 & \OT & \OT & \TT & \TT & \OT & \TT & 0 & \OT & \TT & \OT & 0 & \OT & \OT & 720\\[\a]
0 & 0 & \OT & \OT & \TT & \TT & \OT & \TT & 0 & \OT & \TT & \OT & \OT & \TT & \OT & 720\\[\a]
0 & 0 & \OT & \OT & \TT & \TT & \OT & \TT & \OT & \OT & \TT & \OT & \OT & 0 & \OT & 360\\[\a]
0 & 0 & \OT & \OT & \TT & \TT & \OT & \TT & \OT & \TT & \TT & \OT & \TT & \OT & \OT & 720\\[\a]
0 & 0 & \OT & \OT & \TT & \TT & \TT & \TT & 0 & \TT & \TT & \OT & \TT & \OT & \OT & 360\\[\a]
0 & 0 & \OT & \TT & \TT & \TT & \OT & \OT & \OT & \TT & 0 & \OT & 0 & \OT & \TT & 360\\[\a]
0 & 0 & \OT & \TT & \TT & \TT & \TT & \OT & \OT & \TT & \OT & \OT & \OT & \OT & \TT & 180\\[\a]
0 & 0 & \TT & \TT & \TT & \OT & \OT & \OT & \OT & \OT & \OT & \OT & \TT & \TT & \TT & 60\\[\a]
0 & \OT & \OT & \OT & \OT & \OT & \OT & \OT & \TT & \TT & \TT & \OT & \TT & \TT & \TT & 360\\[\a]
0 & \OT & \OT & \OT & \OT & \OT & \OT & \TT & \TT & \TT & \OT & \TT & \TT & \TT & \TT & 720\\[\a]
0 & \OT & \OT & \OT & \OT & \OT & \TT & \TT & \TT & \OT & \TT & \TT & \TT & \TT & \TT & 360\\[\a]
0 & \OT & \OT & \OT & \TT & \OT & \OT & \TT & \OT & \TT & 0 & \TT & \OT & \TT & \OT & 720\\[\a]
0 & \OT & \OT & \OT & \TT & \OT & \TT & \TT & \OT & 0 & \OT & \TT & \TT & \OT & \OT & 360\\[\a]
0 & \OT & \OT & \OT & \TT & \OT & \TT & \TT & \OT & \TT & \TT & \OT & \TT & \OT & \OT & 360\\[\a]
\bottomrule
\end{tabular}
\caption{
The 88 $\{...\}$-valued representatives (vd.\ repr'tives) $(\chi_{1,2},\ldots,\chi_{5,6})$ for the 28895 elements of $\Ex(\tcf_{6})$ (see Section~\ref{sect:compresults}). 
Columns (1)-(15) list the coordinates $\chi_{1,2},\ldots,\chi_{5,6}$. The last column gives the orbit length under permutations.
\label{table:TCFsixvertices}
}
\end{table}

\setlength{\tabcolsep}{0.68mm}
\begin{table}[p]\scriptsize
\begin{tabular}{ccccccccccccccc|c||c|c}
\toprule
\multicolumn{18}{l}{\textbf{\scriptsize Facets of $\tcf_6$}}  \\
\midrule
\multicolumn{18}{l}{\scriptsize Generator 1}  \\
\midrule
\multicolumn{14}{l}{{\scriptsize Positivity}} & & & &  \\
-1 & 0 & 0 & 0 & 0 & 0 & 0 & 0 & 0 & 0 & 0 & 0 & 0 & 0 & 0 & 0 & 7657 & 15 \\
 \multicolumn{14}{l}{{Triangle inequality}} & & & &  \\
-1 & 1 & 0 & 0 & 0 & 1 & 0 & 0 & 0 & 0 & 0 & 0 & 0 & 0 & 0 & 1 & 3521 & 60  \\ 
\midrule
\multicolumn{18}{l}{\scriptsize Generator 2}   \\
\midrule
\multicolumn{14}{l}{{\scriptsize Tetrahedron inequality}}  & & & & \\
-1 & -1 & 1 & 0 & 0 & -1 & 1 & 0 & 0 & 1 & 0 & 0 & 0 & 0 & 0 & 1 & 1554 & 60 \\
\multicolumn{14}{l}{{\scriptsize Pentagonal inequality}}  & & & & \\
-1 & -1 & 1 & 1 & 0 & -1 & 1 & 1 & 0 & 1 & 1 & 0 & -1 & 0 & 0 & 2 & 1043 &  60 \\
\midrule
\multicolumn{18}{l}{\scriptsize Generator 3}   \\
\midrule
\multicolumn{14}{l}{{\scriptsize Pyramid inequality}}  & & & & \\
-1 & -1 & -1 & 1 & 0 & -1 & -1 & 1 & 0 & -1 & 1 & 0 & 1 & 0 & 0 & 1 & 110 & 30 \\
\multicolumn{14}{l}{{\scriptsize 2-weighted pentagonal inequality}} & & & & \\
-2 & -2 & 2 & 2 & 0 & -1 & 1 & 1 & 0 & 1 & 1 & 0 & -1 & 0 & 0 & 3 & 135 & 180 \\
\multicolumn{14}{l}{{\scriptsize 2-weighted pyramid inequality}} & & & & \\
-1 & -1 & -1 & 2 & 0 & -1 & -1 & 2 & 0 & -1 & 2 & 0 & 2 & 0 & 0 & 3 & 102 & 30 \\
\multicolumn{14}{l}{{\scriptsize ``new inequalities'' from here on}} & & & & \\
-2 & -2 & 2 & 2 & 2 & -1 & 1 & 1 & 1 & 1 & 1 & 1 & -1 & -1 & -1 & 4 & 129 & 60 \\
-1 & -1 & -1 & 1 & 2 & -1 & -1 & 1 & 2 & -1 & 1 & 2 & 1 & 2 & -2 & 4 & 129 & 30 \\
\midrule
\multicolumn{18}{l}{\scriptsize Generator 4}  \\
\midrule
-1 & -1 & -1 & 1 & 1 & -1 & -1 & 1 & 1 & -1 & 1 & 1 & 1 & 1 & -1 & 2 & 554 & 15 \\
\midrule
\multicolumn{18}{l}{\scriptsize Generator 5}  \\
\midrule
-2 & -2 & -2 & 2 & 2 & -1 & -1 & 1 & 1 & -1 & 1 & 1 & 1 & 1 & -1 & 3 & 20 & 60 \\
-1 & -1 & -1 & -1 & 2 & -1 & -1 & -1 & 2 & -1 & -1 & 2 & -1 & 2 & 2 & 3 & 20 & 6 \\
-4 & -2 & 2 & 2 & 2 & -2 & 2 & 2 & 2 & 1 & 1 & 1 & -1 & -1 & -1 & 5 & 61 & 60 \\
-2 & -2 & -2 & 2 & 4 & -1 & -1 & 1 & 2 & -1 & 1 & 2 & 1 & 2 & -2 & 5 & 53 & 120 \\
\midrule
\multicolumn{18}{l}{\scriptsize Generator 6} \\
\midrule
-1 & -1 & -1 & -1 & 1 & -1 & -1 & -1 & 1 & -1 & -1 & 1 & -1 & 1 & 1 & 1 & 15 & 6 \\
-3 & -3 & 3 & 3 & 3 & -1 & 1 & 1 & 1 & 1 & 1 & 1 & -1 & -1 & -1 & 6 & 15 & 60 \\
-1 & -1 & -1 & -1 & 3 & -1 & -1 & -1 & 3 & -1 & -1 & 3 & -1 & 3 & 3 & 6 & 15 & 6  \\
\midrule
\multicolumn{18}{l}{\scriptsize Generator 7: Clique-Web-Generator;}  \\
\multicolumn{18}{l}{{\scriptsize all following inequalities are not hypermetric}} \\
\midrule
-1 & -1 & -1 & 1 & 1 & -1 & -1 & 1 & 1 & 0 & 0 & 1 & 1 & 0 & -1 & 2 & 95 & 180 \\
-1 & -1 & 0 & 0 & 1 & 0 & -1 & 0 & 1 & 0 & -1 & 1 & -1 & 1 & 1 & 2 & 95 & 72 \\
-1 & -1 & -1 & 1 & 1 & -1 & -1 & 1 & 1 & 0 & 0 & 1 & 1 & 1 & 0 & 3 & 15 & 360 \\
-1 & -1 & -1 & 1 & 1 & -1 & 0 & 0 & 1 & 0 & 1 & 0 & 1 & 1 & 0 & 3 & 15 & 360 \\
-1 & -1 & -1 & 1 & 1 & -1 & 0 & 0 & 1 & 0 & 1 & 1 & 1 & 1 & -1 & 3 & 15 & 720 \\
-1 & -1 & 0 & 0 & 1 & -1 & 1 & 1 & 1 & 1 & 1 & 1 & -1 & -1 & 0 & 3 & 15 & 360 \\
-1 & -1 & 0 & 0 & 1 & 0 & -1 & 0 & 1 & 0 & 1 & 1 & 1 & 1 & -1 & 3 & 15 & 360 \\
-1 & -1 & 0 & 1 & 1 & -1 & 1 & 0 & 1 & 1 & 1 & 1 & 0 & -1 & -1 & 3 & 15 & 360 \\
\bottomrule
\end{tabular}
\hspace{1mm}
\begin{tabular}{ccccccccccccccc|c||c|c}
\toprule
\multicolumn{18}{l}{Generator 8: Clique-Web-Generator}  \\
\midrule
-2 & -2 & -2 & 1 & 2 & -1 & -1 & 1 & 1 & -1 & 1 & 1 & 1 & 1 & 0 & 3 & 18 & 120 \\
-2 & -2 & -1 & 2 & 2 & -1 & -1 & 1 & 1 & -1 & 1 & 1 & 0 & 1 & -1 & 3 & 18 & 360 \\
-1 & -1 & -1 & -1 & 2 & -1 & -1 & -1 & 2 & -1 & -1 & 2 & 0 & 1 & 2 & 3 & 18 & 120 \\
-3 & -2 & -1 & 2 & 2 & -1 & -2 & 2 & 2 & 0 & 1 & 1 & 1 & 1 & -1 & 4 & 83 & 180 \\
-2 & -2 & -2 & 1 & 3 & -1 & -1 & 1 & 2 & -1 & 1 & 2 & 1 & 2 & -2 & 4 & 86 & 120 \\
-3 & -2 & 1 & 2 & 2 & -2 & 2 & 1 & 2 & 1 & 1 & 1 & 0 & -1 & -1 & 5 & 15 & 360 \\
-3 & -2 & 1 & 2 & 2 & -2 & 2 & 2 & 2 & 1 & 1 & 1 & -1 & -1 & -1 & 5 & 15 & 360 \\
-3 & -2 & 1 & 2 & 2 & -1 & 2 & 2 & 2 & 0 & 1 & 1 & -1 & -1 & -1 & 5 & 15 & 360 \\
-2 & -2 & -2 & 2 & 3 & -1 & -1 & 1 & 2 & -1 & 1 & 2 & 1 & 2 & -1 & 5 & 15 & 120 \\
-2 & -2 & -1 & 2 & 3 & -1 & -1 & 1 & 2 & -1 & 1 & 2 & 0 & 2 & -1 & 5 & 15 & 360 \\
-2 & -2 & -1 & 2 & 3 & -1 & -1 & 1 & 2 & -1 & 1 & 2 & 1 & 2 & -2 & 5 & 15 & 360 \\
-2 & -2 & -1 & 2 & 3 & -1 & -1 & 1 & 2 & 0 & 1 & 1 & 1 & 2 & -2 & 5 & 15 & 720 \\
-2 & -2 & 1 & 2 & 3 & -1 & 1 & 1 & 2 & 1 & 1 & 2 & -1 & -2 & -2 & 5 & 15 & 360 \\
\midrule
\multicolumn{18}{l}{Generator 9: Clique-Web-Generator} \\
\midrule
-2 & -2 & -2 & -2 & 3 & -1 & -1 & -1 & 2 & -1 & -1 & 2 & -1 & 2 & 2 & 3 & 15 & 30 \\
-3 & -3 & -3 & 3 & 5 & -1 & -1 & 1 & 2 & -1 & 1 & 2 & 1 & 2 & -2 & 6 & 15 & 120\\
-2 & -2 & -2 & -2 & 5 & -1 & -1 & -1 & 3 & -1 & -1 & 3 & -1 & 3 & 3 & 6 & 15 & 30\\
-5 & -5 & 3 & 3 & 3 & -3 & 2 & 2 & 2 & 2 & 2 & 2 & -1 & -1 & -1 & 7 & 73 & 60\\
-5 & -3 & 3 & 3 & 5 & -2 & 2 & 2 & 3 & 1 & 1 & 2 & -1 & -2 & -2 & 8 & 15 & 360\\
-3 & -3 & -3 & 5 & 5 & -1 & -1 & 2 & 2 & -1 & 2 & 2 & 2 & 2 & -3 & 8 & 15 & 60\\
-3 & -2 & -2 & 2 & 5 & -2 & -2 & 2 & 5 & -1 & 1 & 3 & 1 & 3 & -3 & 8 & 15 & 180\\
\midrule
\multicolumn{18}{l}{Generator 10: Parachute-Generator} \\
\midrule
-1 & -1 & 0 & 0 & 1 & 0 & -1 & 0 & 1 & 1 & -1 & 0 & 1 & 1 & -1 & 2 & 93 & 360\\
-1 & -1 & 0 & 0 & 1 & -1 & 0 & 1 & 0 & 1 & 0 & 1 & 1 & -1 & 1 & 3 & 15 & 720\\
-1 & -1 & 0 & 0 & 1 & -1 & 0 & 1 & 1 & 1 & 1 & 0 & -1 & 1 & 0 & 3 & 15 & 720\\
-1 & -1 & 0 & 0 & 1 & 0 & -1 & 0 & 1 & 1 & -1 & 1 & 1 & 0 & 1 & 3 & 15 & 720\\
-1 & -1 & 0 & 0 & 1 & 0 & -1 & 1 & 0 & 1 & -1 & 1 & 1 & 0 & 1 & 3 & 15 & 720\\
-1 & -1 & 0 & 0 & 1 & 0 & -1 & 1 & 1 & 1 & 0 & 1 & 1 & 0 & -1 & 3 & 15 & 720\\
\midrule
\multicolumn{18}{l}{Generator 11: Grishukhin-Generator} \\
\midrule
-1 & -1 & -1 & 0 & 1 & -1 & -1 & 0 & 1 & -1 & 1 & 0 & 1 & 0 & 1 & 2 & 19 & 90\\
-1 & -1 & -1 & 0 & 1 & -1 & -1 & 0 & 1 & 0 & -1 & 1 & 1 & 0 & 1 & 2 & 19 & 360\\
-2 & -2 & -1 & 1 & 2 & -1 & 0 & 1 & 1 & 0 & 1 & 1 & -1 & 1 & 0 & 3 & 87 & 360\\
-2 & -2 & 1 & 2 & 2 & -1 & 0 & 1 & 1 & 0 & 1 & 1 & -1 & -1 & -1 & 3 & 88 & 180\\
-2 & -2 & -1 & 2 & 2 & -1 & 0 & 1 & 1 & 0 & 1 & 1 & 1 & 1 & -1 & 4 & 15 & 180\\
-2 & -2 & 1 & 1 & 2 & -1 & 0 & 1 & 1 & 0 & 1 & 1 & 1 & -1 & 0 & 4 & 15 & 360\\
-2 & -2 & 1 & 1 & 2 & -1 & 0 & 1 & 1 & 1 & 0 & 1 & 1 & -1 & 0 & 4 & 15 & 720\\
-2 & -2 & 1 & 2 & 2 & -1 & 0 & 1 & 1 & 1 & 1 & 1 & -1 & 0 & -1 & 4 & 15 & 720\\
-2 & -1 & -1 & 2 & 2 & -1 & 0 & 1 & 1 & 1 & 0 & 0 & 1 & 1 & -1 & 4 & 15 & 360\\
-2 & -1 & -1 & 2 & 2 & -1 & 0 & 1 & 1 & 1 & 0 & 1 & 1 & 0 & -1 & 4 & 15 & 720\\
-2 & -1 & 1 & 1 & 1 & -1 & 2 & 2 & 2 & 0 & 0 & 1 & -1 & -1 & -1 & 4 & 15 & 360\\
-2 & -1 & 1 & 2 & 2 & 0 & 1 & 1 & 1 & -1 & 0 & 1 & -1 & 0 & -1 & 4 & 15 & 720\\
-2 & 0 & 1 & 1 & 1 & 1 & 2 & 2 & 2 & -1 & -1 & 0 & -1 & -1 & -1 & 4 & 15 & 360\\
-1 & -1 & -1 & 0 & 2 & -1 & -1 & 0 & 2 & -1 & 1 & 2 & 1 & 2 & -1 & 4 & 15 & 180\\
-1 & -1 & -1 & 0 & 2 & -1 & -1 & 0 & 2 & 0 & -1 & 2 & 1 & 1 & 1 & 4 & 15 & 360\\
-1 & -1 & -1 & 0 & 2 & -1 & 0 & 1 & 2 & 0 & 1 & 2 & -1 & 1 & -1 & 4 & 15 & 360\\
\bottomrule
\end{tabular}
\caption{
The 67 representatives for the 18720 facets of $\tcf_{6}$ as computed from the 11 generators of the facets of $\cut_{7}$ and the 28895 vertices of $\tcf_{6}$ (see Section~\ref{sect:compresults} and Tables~\ref{table:cutgenerators} and~\ref{table:TCFsixvertices}).  When we use the format $\sum_{1\leq i < j \leq 6}c_{ij}x_{ij} \leq c_{0}$ for the facet inducing inequalities of $\tcf_6$, columns (1)-(16) list the coefficients $c_{1,2},c_{1,3},\ldots,c_{5,6}$ followed by the constant $c_{0}$, column (17) is the total number of vertices from $\tcf_{6}$ solving it as an equation
and finally, column (18) is the orbit length under permutations. By ``new inequalities'' we mean that the following inequalities cannot be obtained as liftings from $\tcf_5$ (see Section~\ref{sect:lifting}).  
\label{table:TCFsixfacets}}
\end{table}

\begin{table}[p]\footnotesize \centering
\setlength{\tabcolsep}{0.9mm}
\begin{tabular}{rlp{2mm}ccccccccccccccccccccc}
\toprule
\multicolumn{24}{c}{\textbf{Generators for the cut polytope $\cut_7$}} \\
\midrule
\multicolumn{2}{r}{Name in \cite{dezalaurent_97}} && \multicolumn{21}{c}{Coefficients $c_{1,2},\ldots,c_{6,7}$}\\
\midrule
1. & $Q_7(1,1,-1,0,0,0,0)$ && 1&-1&0&0&0&0&-1&0&0&0&0&0&0&0&0&0&0&0&0&0&0\\
2. & $Q_7(1,1,1,-1,-1,0,0)$ && 1&1&-1&-1&0&0&1&-1&-1&0&0&-1&-1&0&0&1&0&0&0&0&0 \\
3. & $Q_7(2,1,1,-1,-1,-1,0)$ && 2&2&-2&-2&-2&0&1&-1&-1&-1&0&-1&-1&-1&0&1&1&0&1&0&0\\
4. & $Q_7(1,1,1,-1,-1,-1,-1)$ && 1&1&1&-1&-1&-1&1&1&-1&-1&-1&1&-1&-1&-1&-1&-1&-1&1&1&1\\
5. & $Q_7(2,2,1,-1,-1,-1,-1)$ && 4&2&-2&-2&-2&-2&2&-2&-2&-2&-2&-1&-1&-1&-1&1&1&1&1&1&1\\
6. & $Q_7(3,1,1,-1,-1,-1,-1)$ && 3&3&-3&-3&-3&-3&1&-1&-1&-1&-1&-1&-1&-1&-1&1&1&1&1&1&1\\
\midrule
7. & $\text{CW}^1_7(1,1,1,1,1,-1,-1)$ && 0&1&1&0&-1&-1&0&1&1&-1&-1&0&1&-1&-1&0&-1&-1&-1&-1&1\\
8. & $\text{CW}^1_7(2,2,1,1,-1,-1,-1)$ && 3&2&1&-2&-2&-2&1&2&-2&-2&-2&0&-1&-1&-1&-1&-1&-1&1&1&1\\
9. & $\text{CW}^1_7(3,2,2,-1,-1,-1,-1)$ && 5&5&-3&-3&-3&-3&3&-2&-2&-2&-2&-2&-2&-2&-2&1&1&1&1&1&1\\
\midrule
10. & $\text{Par}_7$ && -1&-1&0&-1&-1&0&1&0&1&0&-1&1&0&0&-1&-1&-1&-1&1&0&1 \\
\midrule
11. & $\text{Gr}_7$ && 1&1&1&-2&-1&0&1&1&-2&0&-1&1&-2&-1&0&-2&0&-1&1&1&-1\\
\bottomrule
\end{tabular}
\caption{ 
The 11 homogeneous generators for the $116\,764$  facets of $\cut_{7}$ under all switchings and permutations as in \cite{dezalaurent_97} p.~504 
and their 21 coefficients $c_{1,2},\ldots,c_{6,7}$ of $\sum_{1\leq i < j \leq 7}c_{ij}x_{ij} \leq 0$. Generators 1-6 are hypermetric ``in the cut sense'', i.e., the given $b$-vectors determine the $c_{ij}$ via $c_{ij}=b_i\cdot b_j$. Generators 7-9 are called clique-web inequalities (the vectors have a different meaning here). Generator 10 is a parachute inequality and generator 11 a Grishukhin inequality.
\label{table:cutgenerators}}
\vspace{7mm}
\setlength{\tabcolsep}{2mm}
\begin{tabular}{lll|l}
\toprule
\multicolumn{4}{c}{\textbf{Hypermetric facets of $\tcf_6$ and their corresponding $b$-vector}}  \\
\midrule
Generator 1 & $b=(1,1,0,0,0,0)$ & lifting of \emph{positivity inequality} &  $v_0$\\
& $b=(1,1,-1,0,0,0)$ & lifting of \emph{triangle inequality} &  $v_1$\\
\midrule
Generator 2 & $b=(1,1,1,-1,0,0)$ & lifting of \emph{tetrahedron inequality} \\
& $b=(1,1,1,-1,-1,0)$ & lifting of \emph{pentagonal inequality} &  $v_1$\\
\midrule
Generator 3 & $b=(1,1,1,1,-1,0)$ & lifting of \emph{pyramid inequality} \\
& $b=(2,1,1,-1,-1,0)$ & lifting of \emph{2-weighted pentagonal inequality} \\
& $b=(1,1,1,1,-2,0)$ & lifting of \emph{2-weighted pyramid inequality} \\
& $b=(2,1,1,-1,-1,-1)$ & &  $v_1$\\
& $b=(1,1,1,1,-1,-2)$ & &  $v_1$\\
\midrule
Generator 4 & $b=(1,1,1,1,-1,-1)$ & \\
\midrule
Generator 5 & $b=(2,1,1,1,-1,-1)$ & \\
& $b=(1,1,1,1,1,-2)$ & \\
& $b=(2,2,1,-1,-1,-1)$ & \\
& $b=(2,1,1,1,-1,-2)$ & \\
\midrule
Generator 6 & $b=(1,1,1,1,1,-1)$ & \\
& $b=(3,1,1,-1,-1,-1)$ & \\
& $b=(1,1,1,1,1,-3)$ & \\
\bottomrule
\end{tabular}
\caption{
The 17 representatives for the 858 hypermetric facets of $\tcf_{6}$ and their corresponding $b$-vectors (see Section~\ref{sect:basicfacts}). The list is in the same order as in Table~\ref{table:TCFsixfacets}. The last column indicates whether the respective facet contains one of the exposed vertices $v_0=(0,0,\dots,0)$ or $v_1=(1,1,\dots,1)$.
\label{table:hypermetricTCFsixfacets}}
\end{table}

\end{document}